\PassOptionsToPackage{backref=page}{hyperref}
\documentclass[11pt,letterpaper,reqno]{amsart}

\usepackage[T1]{fontenc}
\usepackage{amsmath,amssymb,amsfonts,amsthm}
\usepackage{mathtools}
\usepackage{aliascnt}
\usepackage{microtype}
\usepackage{xcolor}
\usepackage{doi}
\usepackage{hyperref}
\usepackage{bookmark}
\usepackage[capitalize,noabbrev]{cleveref}

\hypersetup{
  pdfstartview={FitH},
  colorlinks=true,
  linkcolor=blue!55!black,
  citecolor=green!35!black,
  urlcolor=blue!55!black
}
\renewcommand*{\backref}[1]{}
\renewcommand*{\backrefalt}[4]{%
  \ifcase #1
  \or
    \quad$\hookleftarrow$, cited on page~#2%
  \else
    \quad$\hookleftarrow$, cited on pages~#2%
  \fi
}

\newtheorem{theorem}{Theorem}[section]

\newaliascnt{lemma}{theorem}
\newtheorem{lemma}[lemma]{Lemma}
\aliascntresetthe{lemma}

\newaliascnt{proposition}{theorem}
\newtheorem{proposition}[proposition]{Proposition}
\aliascntresetthe{proposition}

\newaliascnt{corollary}{theorem}
\newtheorem{corollary}[corollary]{Corollary}
\aliascntresetthe{corollary}

\newaliascnt{conjecture}{theorem}

\aliascntresetthe{conjecture}

\newaliascnt{problem}{theorem}

\aliascntresetthe{problem}

\theoremstyle{definition}
\newaliascnt{definition}{theorem}

\aliascntresetthe{definition}

\newaliascnt{example}{theorem}

\aliascntresetthe{example}

\newaliascnt{remark}{theorem}
\newtheorem{remark}[remark]{Remark}
\aliascntresetthe{remark}

\crefname{theorem}{Theorem}{Theorems}
\Crefname{theorem}{Theorem}{Theorems}
\crefname{lemma}{Lemma}{Lemmas}
\Crefname{lemma}{Lemma}{Lemmas}
\crefname{proposition}{Proposition}{Propositions}
\Crefname{proposition}{Proposition}{Propositions}
\crefname{corollary}{Corollary}{Corollaries}
\Crefname{corollary}{Corollary}{Corollaries}
\crefname{conjecture}{Conjecture}{Conjectures}
\Crefname{conjecture}{Conjecture}{Conjectures}

\DeclareMathOperator{\sech}{sech}
\newcommand{\C}{\mathbb C}
\newcommand{\D}{\mathbb D}
\newcommand{\E}{\mathbb E}
\newcommand{\R}{\mathbb R}
\newcommand{\dd}{\,\mathrm d}
\newcommand{\ee}{\mathrm{e}}
\newcommand{\ii}{\mathrm{i}}
\newcommand{\mD}{\mathrm m_{\D}}

\begin{document}
\title[Integral Representations for Areal Mahler Measures]
{Integral Representations and Asymptotics for a Family of Areal Mahler Measures}

\author[Q.~Tang]{Quanyu Tang}
\address{School of Mathematical Sciences, University of Science and Technology of China, Hefei 230026, P. R. China}
\thanks{Corresponding author: Quanyu Tang
(\texttt{tangquanyu827@gmail.com}).}
\email{tangquanyu827@gmail.com}

\author[S.~Zhang]{Shu Zhang}
\address{School of Mathematics and Statistics, Xi'an Jiaotong University, Xi'an 710049, P. R. China}
\email{2213715573@stu.xjtu.edu.cn}

\subjclass[2020]{Primary 11R06; Secondary 41A60, 42A38, 60E10}

\keywords{areal Mahler measure, Cayley transform, Mellin transform, Fourier transform, cyclotomic multiple polylogarithm}

\begin{abstract}
We study a problem posed by Matilde Lal\'in concerning the areal Mahler measures of the multivariable polynomial family
$$
 P_m(x_1,\ldots,x_m,u)
 =
 \prod_{j=1}^m(1+x_j)
 +
 u\prod_{j=1}^m(1-x_j),
 \qquad m\geq1.
$$
Using a probabilistic reformulation, we derive convolution and one-dimensional Fourier integral representations for $\mathrm m_{\mathbb D}(P_m)$. We prove that, for every fixed $m$, the value $\pi^m\mathrm m_{\mathbb D}(P_m)$ belongs to the algebra of level-$4$ cyclotomic multiple polylogarithm values, and we evaluate the first nontrivial case $m=2$ explicitly in terms of $\operatorname{Li}_4(1/2)$, $\zeta(3)$, Catalan's constant, $\pi$, and $\log2$. We also derive an explicit three-term asymptotic expansion for $\mathrm m_{\mathbb D}(P_m)$ as $m\to\infty$.
\end{abstract}

\maketitle

\section{Introduction}

For a nonzero Laurent polynomial
$F\in\C[z_1^{\pm1},\ldots,z_n^{\pm1}]$, its \emph{logarithmic Mahler measure} is
\begin{equation}\label{eq:classical-measure}
 \mathrm m(F)=\frac{1}{(2\pi)^n}
 \int_{[0,2\pi]^n}
 \log\bigl|F(\ee^{\ii\theta_1},\ldots,\ee^{\ii\theta_n})\bigr|
 \dd\theta_1\cdots\dd\theta_n.
\end{equation}
In one variable, Jensen's formula \cite{Jensen1899} expresses
\eqref{eq:classical-measure} in terms of the roots of the polynomial.
Lehmer's search for monic integral polynomials of small positive Mahler
measure \cite{Lehmer1933} led to one of the central open problems in the
subject.  Mahler introduced the several-variable form in his work on
polynomial inequalities \cite{Mahler1962}.  Smyth's evaluations
\cite{Smyth1981}, followed by Boyd's extensive numerical and conjectural
investigations \cite{Boyd1981,Boyd1998}, revealed persistent connections
with special values of zeta and $L$-functions.  Regulator interpretations
were developed by Deninger \cite{Deninger1997} and Rodriguez Villegas
\cite{RodriguezVillegas1999}.  For modern accounts of the subject, see
Brunault and Zudilin \cite{BrunaultZudilin2020} and McKee and Smyth
\cite{McKeeSmyth2021}.  Higher Mahler measures and zeta Mahler functions
provide complementary moment-theoretic viewpoints
\cite{Akatsuka2009,KurokawaLalinOchiai2008,Ringeling2022}.

Let $\D=\{z\in\C:|z|<1\}$, and let $\dd A$ denote planar Lebesgue
measure.  Pritsker \cite{Pritsker2008}, motivated by questions in Bergman
spaces, introduced the logarithmic areal Mahler measure.  For nonzero
polynomials $F,G\in\C[z_1,\ldots,z_n]$, we use the extension
\begin{equation}\label{eq:areal-measure}
 \mD(F/G)
 =\frac{1}{\pi^n}\int_{\D^n}
 \log\left|\frac{F(z_1,\ldots,z_n)}{G(z_1,\ldots,z_n)}\right|
 \dd A(z_1)\cdots\dd A(z_n)
 =\mD(F)-\mD(G).
\end{equation}
This definition is unambiguous.  Indeed, if $P$ is a nonzero polynomial, then
$\lvert\log|P|\rvert$ is locally integrable on a neighborhood of the
closed polydisk.  Thus both
polynomial terms in \eqref{eq:areal-measure} are integrable, and
additivity makes the value independent of the chosen quotient
representation.  Pritsker's root formula is the areal analogue
of Jensen's formula.  Later work includes inequalities and arithmetic
questions \cite{ChoiSamuels2012,Flammang2015}, explicit multivariable and
higher-measure evaluations \cite{LalinRoy2024a}, the effect of power
changes of variables \cite{LalinRoy2024b}, and probabilistic methods for
areal measures \cite{LalinNairRingelingRoy2025}.  Kelley has recently
placed the construction in an adelic height-theoretic setting
\cite{Kelley2026}.

Problem~5 in the MM(P) problem list \cite{MMP2023}, posed by Matilde
Lal\'in, asks for the areal Mahler measure of the rational family
$R_m$, or equivalently of the polynomial family $P_m$, where
\begin{equation}\label{eq:families}
 \begin{aligned}
 R_m(x_1,\ldots,x_m,u)
   &=1+u\prod_{j=1}^m\frac{1-x_j}{1+x_j},\\
 P_m(x_1,\ldots,x_m,u)
   &=\prod_{j=1}^m(1+x_j)
     +u\prod_{j=1}^m(1-x_j),
 \end{aligned}
 \qquad m\geq1.
\end{equation}
For $m=1$, the polynomial $P_1$ in \eqref{eq:families} is Boyd's
$1+x+u-xu$ \cite{Boyd1992}.  Lal\'in evaluated the classical Mahler
measure for the full family in terms of special values of the Riemann
zeta function and a Dirichlet $L$-series \cite{Lalin2006}.  Her earlier
work had expressed related Mahler measures as multiple polylogarithms
\cite{Lalin2003}.  Related developments include an invariant change of
variables \cite{LalinNair2023}, higher Mahler measures
\cite{LalinLechasseur2016}, and nonlinear-degree extensions
\cite{LalinNairRoy2024}.  Nair \cite{Nair2025} has obtained an exact formula for the classical
Mahler measure of another family with arbitrarily many variables.  A nearby example is
$(1+x)(1+y)+z$: Brunault proved a conjecture of Boyd and Rodriguez
Villegas relating its classical Mahler measure to the $L$-function of the
elliptic curve of conductor~$15$ \cite{Brunault2023}, while an areal
deformation is studied
in \cite{LalinNairRingelingRoy2025}.  Lal\'in and Roy evaluated the case
$m=1$ of \eqref{eq:families} in the areal setting
\cite[Theorem~1.4]{LalinRoy2024a}.  We are not aware of a published
special-value formula for any $m\geq2$.

Our approach is probabilistic.  Let $X$ be distributed according to
normalized area measure on $\D$, and set
\[
 q(z):=\frac{1-z}{1+z},
 \qquad
 Y:=\log|q(X)|.
\]
Let $\rho$ denote the probability density of $Y$.  We determine $\rho$ and
the Mellin transform of $|q(X)|$.  If
$\psi(z)=\Gamma'(z)/\Gamma(z)$ is the digamma function, define
\begin{equation}\label{eq:phi-definition}
 \varphi(t):=\sech\!\left(\frac{\pi t}{2}\right)
 \left[
 1+\frac{t^2}{2}\operatorname{Re}\!\left(
 \psi\!\left(1+\frac{\ii t}{4}\right)
 -\psi\!\left(\frac12+\frac{\ii t}{4}\right)
 \right)
 \right].
\end{equation}
The function $\varphi$ is the characteristic function of $Y$.  The first
main result reduces the defining $(m+1)$-fold complex integral to a single
real integral.

\begin{theorem}\label{thm:fourier-main}
For every integer $m\geq1$,
\begin{equation}\label{eq:fourier-main}
 \mD(P_m)=\mD(R_m)
 =\frac{4}{\pi}\int_0^{+\infty}
 \frac{1-\varphi(t)^m}{t^2(t^2+4)}\,\dd t.
\end{equation}
\end{theorem}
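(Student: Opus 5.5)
First reduce to a one-variable problem in $u$. Since $P_m=\prod_j(1+x_j)\cdot R_m$ and $\mD(1+x)=0$ (because $1+x$ has no zeros in $\D$, the mean value property gives $\log|1+x|$ area-average $\log 1=0$), additivity in \eqref{eq:areal-measure} gives $\mD(P_m)=\mD(R_m)$. Fix $x_1,\dots,x_m\in\D$ and set $w=\prod_j q(x_j)$. We need the area average over $u\in\D$ of $\log|1+uw|$. By Pritsker's root formula, or by a direct computation using subharmonic averages, this equals $g(\log|w|)$, where
\[
 g(s)=\begin{cases}0,& s\le 0,\\ s-\tfrac12+\tfrac12\ee^{-2s},& s>0.\end{cases}
\]
Indeed, for $|w|>1$ the root $-1/w$ lies in $\D$. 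Writing $r=1/|w|$, the area average of $\log|u+r|$ is $\log$ of the circle average, integrated over radii: it equals $\int_0^1 2\rho\log\max(\rho,r)\dd\rho = \log r\cdot r^2-\tfrac12+\tfrac{r^2}{2}-r^2\log r = \tfrac{r^2-1}{2}$. Adding $\log|w|$ gives $g$. Consequently, with $Y_1,\dots,Y_m$ i.i.d.\ copies of $Y$ and $S=Y_1+\cdots+Y_m$, we get $\mD(R_m)=\E\, g(S)$.

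Next I pass to Fourier space. The symmetry $q(-z)=1/q(z)$ makes $Y$ symmetric, so $S$ is symmetric, and $\E g(S)=\E\,h(S)$ with $h=\tfrac12(g(s)+g(-s))=\tfrac12\bigl(|s|-\tfrac12+\tfrac12\ee^{-2|s|}\bigr)$. This $h$ is even, vanishes to second order at $0$, and grows like $|s|/2$. The key identity to verify is
\[
 h(s)=\frac{4}{\pi}\int_0^\infty \frac{1-\cos(ts)}{t^2(t^2+4)}\dd t .
\]
To check it, use partial fractions $\frac{1}{t^2(t^2+4)}=\frac14\bigl(\frac1{t^2}-\frac1{t^2+4}\bigr)$ together with the two standard integrals $\int_0^\infty\frac{1-\cos ts}{t^2}\dd t=\frac{\pi|s|}{2}$ and $\int_0^\infty\frac{1-\cos ts}{t^2+4}\dd t=\frac{\pi}{4}(1-\ee^{-2|s|})$. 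These give $\frac1\pi\bigl(\frac{\pi|s|}{2}-\frac\pi4+\frac\pi4\ee^{-2|s|}\bigr)=h(s)$, as required.

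Then take expectations and apply Tonelli; the integrand is nonnegative, so the interchange is justified. Since $\E\cos(tS)=\operatorname{Re}\varphi(t)^m=\varphi(t)^m$, with $\varphi$ real because $Y$ is symmetric, this yields \eqref{eq:fourier-main}. Finiteness comes for free once $\E|Y|<\infty$, which holds because $\log|1\pm x|$ is integrable on $\D$.

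The main obstacle is establishing that $\varphi$ in \eqref{eq:phi-definition} really is the characteristic function of $Y$, i.e.\ computing the Mellin transform $\E|q(X)|^{\ii t}$. My first approach would be a change of variables $w=q(z)$, which maps $\D$ onto the right half-plane. The Jacobian is $|q'|^2=4/|1+w|^4$, so
\[
 \E|q(X)|^{s}=\frac{4}{\pi}\iint_{\operatorname{Re}w>0}\frac{|w|^{s}}{|1+w|^4}\dd A(w).
\]
In polar coordinates $w=r\ee^{\ii\theta}$ with $|\theta|<\pi/2$, the radial integral $\int_0^\infty r^{s+1}(r^2+2r\cos\theta+1)^{-2}\dd r$ is a classical Mellin integral. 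It evaluates to a combination of $\pi(s+1)/\sin(\pi s)$-type terms times trigonometric functions of $\theta$. Integrating over $\theta$ should produce the $\sech(\pi t/2)$ factor together with a digamma difference, which arises from $\int_0^{\pi/2}\theta\cdot(\text{trig})\dd\theta$ type integrals. If this direct route gets messy, I would instead verify that the claimed $\varphi$ has the right Fourier inverse $\rho$, computed from the distribution of $|q(X)|$ via the level sets of $|q|$, which are Apollonius circles.
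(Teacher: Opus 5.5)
Your argument is correct and is essentially the paper's own proof. The reduction $\mD(P_m)=\mD(R_m)=\E\,g(S)$ (where you compute the one-variable kernel by circle averages instead of citing Pritsker's formula), the symmetrization to the even kernel $\frac{|s|}{2}+\frac{\ee^{-2|s|}-1}{4}$, its Fourier representation, Tonelli, and $\E\cos(tS)=\varphi(t)^m$ are exactly the paper's steps. The identity you flag as the main obstacle is, in the paper, a separate earlier result (Proposition~\ref{prop:zeta-cayley} and Corollary~\ref{cor:characteristic}) that this proof simply cites, and it is proved by the route you sketch: the radial integral comes from differentiating $\int_0^{\infty}r^s(1+2r\cos\theta+r^2)^{-1}\,\dd r=\pi\sin(s\theta)/(\sin(\pi s)\sin\theta)$ in $\theta$, and the digamma terms come from $\int_0^{\pi/2}\sin(s\theta)\cot\theta\,\dd\theta$ via the Fourier series of $\log\sin\theta$. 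As written, your proof is complete once that result is cited or this computation is carried out.
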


The convolution formula can also be used in the special-value direction.
Let $\mathcal Z_4$ denote the $\mathbb Q(\ii)$-algebra generated by the
shuffle-regularized Goncharov polylogarithm values
\[
 G(a_1,\ldots,a_r;1),
 \qquad a_j\in\{0,1,-1,\ii,-\ii\},
\]
with the conventions specified in Section~\ref{sec:cyclotomic}.
Thus $\mathcal Z_4$ is the algebra of level-$4$ cyclotomic multiple
polylogarithm values.  The next theorem identifies a uniform period
space for the whole family.

\begin{theorem}\label{thm:cyclotomic-main}
For every integer $m\geq1$,
\begin{equation}\label{eq:cyclotomic-main}
 \pi^m\mD(P_m)\in\mathcal Z_4.
\end{equation}
More precisely, if
$F_m(r):=\rho^{*m}(-\log r)$ for $0<r<1$, where $\rho^{*m}$ denotes
the $m$-fold convolution of $\rho$, then $\pi^mF_m(r)$ is a
finite hyperlogarithmic expression with alphabet
$\{0,1,-1,\ii,-\ii\}$ and coefficients in $\mathcal Z_4$, and the
recursion \eqref{eq:F-recursion} computes it effectively for every
fixed $m$.
\end{theorem}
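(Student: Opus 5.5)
The plan is to work throughout in the multiplicative variable $r=\ee^{-y}$ and to show, by induction on $m$, that $\pi^mF_m$ lies in the class $\mathcal H$ of finite $\mathcal Z_4$-linear combinations of products of rational functions in $r$ with poles in $\{0,\pm1,\pm\ii\}$ and of hyperlogarithms $G(a_1,\ldots,a_k;r)$ with $a_j\in\{0,1,-1,\ii,-\ii\}$. Three ingredients are needed: the base case $m=1$, a Mellin-convolution recursion \eqref{eq:F-recursion}, and a final integration against the kernel coming from the $u$-variable.

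\emph{Base case.} If $X$ is uniform on $\D$, then $W=q(X)$ has density $4/(\pi|1+w|^4)$ on the right half-plane. Hence the density of $R=|W|$ is
\[
 \frac{4r}{\pi}\int_{-\pi/2}^{\pi/2}\frac{\dd\theta}{(1+2r\cos\theta+r^2)^2},
\]
and $\rho(y)=\ee^{y}\cdot(\text{this})|_{r=\ee^y}$. The map $w\mapsto1/w$ preserves the law of $W$, so $\rho$ is even and $F_m(1/r)=F_m(r)$. The $\theta$-integral is elementary: it equals a rational function with denominators built from $(1-r^2)$ and $(1+r^2)$, times either $\pi$ or $\arctan\frac{1-r}{1+r}=\frac\pi4-\arctan r$. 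Writing $\arctan r=\frac{1}{2\ii}\bigl(G(\ii;r)-G(-\ii;r)\bigr)$ up to sign conventions, this shows $\pi F_1\in\mathcal H$.

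\emph{Recursion.} Because $\rho$ is even, I would write
\[
 F_{m+1}(r)=\int_0^\infty F_m(s)\,F_1(r/s)\,\frac{\dd s}{s}.
\]
I would split this at $s=r$ and $s=1$, and use $F_j(1/t)=F_j(t)$ to fold every piece onto arguments in $(0,1)$. This is \eqref{eq:F-recursion}. In each piece the inner integrand is a rational function of $s$ with poles at $s\in\{0,\pm r,\pm\ii r\}$ (from $F_1(r/s)$) and $\{\pm1,\pm\ii\}$ (from $F_m(s)$). It is multiplied by hyperlogarithms in $s$ and in $r/s$. The key point is that the set $\{\pm1,\pm\ii\}$ is a group, so after the substitution $s=rt$ (or $s=r/t$) every letter in the rescaled variable is again in $\{0,\pm1,\pm\ii\}$, or is of the form $\zeta r$ or $\zeta/r$ with $\zeta^4=1$.

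One then applies the standard Brown/Panzer linear-reduction algorithm:
\begin{itemize}
 \item partial fractions in $t$;
 \item shuffle regularization of the hyperlogarithms in $t$;
 \item integration, which raises weight by one;
 \item re-expansion of $G(\ldots,\zeta r,\ldots;1)$ as hyperlogarithms in $r$ with letters in $\{0,\pm1,\pm\ii\}$.
\end{itemize}
The last step works because the compatibility graph closes over the fourth roots of unity. Boundary constants appear only as regularized values at $1$ of words in this alphabet, i.e.\ in $\mathcal Z_4$. The extra factor of $\pi$ per convolution comes from the factor $\pi$ in $F_1$'s normalization and is absorbed into the $\pi^{m+1}$ prefactor. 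This gives $\pi^{m+1}F_{m+1}\in\mathcal H$ and proves the ``more precisely'' part.

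\emph{Conclusion.} For fixed $R$, Pritsker's formula gives $\mD(1+uR)=R^2/2$ if $R\le1$ and $\log R+\frac1{2R^2}$ if $R\ge1$. Using the symmetry of $\rho^{*m}$ to fold onto $r\in(0,1)$,
\[
 \mD(P_m)=\int_0^1\Bigl(\frac{r^2}{2}+\frac{r^2}{2}-\log r\Bigr)F_m(r)\,\frac{\dd r}{r}.
\]
Multiplying by $\pi^m$ and integrating a rational function times $-\log r=-G(0;r)$ times an element of $\mathcal H$ from $0$ to $1$ yields an element of $\mathcal Z_4$ by the same regularized-integration lemma.

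\emph{Main obstacle.} I expect the hard step to be justifying the recursion step analytically. The hyperlogarithmic integrals have endpoint singularities at $t=0$, $t=1$ and at the coincidence $s=r$ as $r\to1$. One must check that the actual convergent integral equals the shuffle-regularized combination, so that no divergent regularization terms survive. I would first establish the local behavior $F_m(r)=O(r\log^{m-1}(1/r))$ as $r\to0$ and $F_m$ bounded near $r=1$, both by induction. With these bounds, the regularized and genuine integrals agree term by term.
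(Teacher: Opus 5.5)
Your route is the paper's route. You use the same class $\mathcal H_4$ and the same base case $\pi\widehat\rho\in\mathcal H_4$ via $\arctan r$. Your three pieces $s\in(0,r)$, $(r,1)$, $(1,\infty)$ are, after substitution, exactly the paper's $\mathcal I_1,\mathcal I_2,\mathcal I_3$. You then use fibration over $\mu_4$, fix constants at $r=1$, and use an endpoint bound to identify regularized and genuine integrals. Your bound $O(r\log^{m-1}(1/r))$ is weaker than the paper's $O\bigl(r^2(1+|\log r|)^{m-1}\bigr)$, but it still suffices for the dominated-convergence steps.

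One point you assert rather than check is the closure over $\mu_4$. It requires that no single piece contains both a letter $\zeta r$ and a letter $\zeta'/r$. Otherwise the difference $\zeta r-\zeta'/r$ appears, and it vanishes at primitive eighth roots of unity when $\zeta'/\zeta=\pm\ii$. In your decomposition this does hold:
\begin{itemize}
\item the piece on $(0,r)$ after $s=rt$, and the folded piece from $(1,\infty)$, carry only $r^{-1}\mu_4$;
\item the middle piece carries only $r\mu_4$, and its moving endpoint $r$ contributes only factors $r-b$ with $b\in\mu_4$.
\end{itemize}
This is what the paper's list of factors $ar-b$, $a-rb$, $1-rb$, $r-a$ records.

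The one actual error is in your conclusion: the kernel is wrong. For $R\le1$, $\log|1+uR|$ is harmonic on a neighborhood of $\overline{\D}$, so $\mD(1+uR)=0$, not $R^2/2$. For $R\ge1$, Pritsker's formula gives $\log R+\frac{1}{2R^2}-\frac12$. Two checks confirm this: at $R=1$ one needs $\mD(1+u)=0$, and as $R\to\infty$ one needs $\log R+\mD(u)=\log R-\frac12$. Hence the correct folded identity is
\[
 \mD(P_m)=\int_0^1\Bigl(-\log r+\frac{r^2-1}{2}\Bigr)F_m(r)\,\frac{\dd r}{r}.
\]
Your displayed integral exceeds this by $\frac12\int_0^{+\infty}(1+\ee^{-2y})\rho^{*m}(y)\,\dd y\geq\frac14$, so as written it does not equal $\mD(P_m)$. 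The membership argument is not damaged, because any kernel built from $\log r$ and polynomials in $r$ is handled by the same hyperlogarithmic integration. You do, however, need to replace the formula before concluding.
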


The first nontrivial member of the family admits a considerably simpler
closed form.  Let
\[
 \mathsf G:=\sum_{k=0}^{\infty}\frac{(-1)^k}{(2k+1)^2}
\]
denote Catalan's constant. Here and below, $\zeta$ denotes the Riemann zeta function.

\begin{theorem}\label{thm:two-factor-main}
We have
\begin{equation}\label{eq:two-factor-main}
 \begin{aligned}
 \mD(P_2)={}&2-2\log2-\frac43\log^22-\frac{91\pi^2}{90}\\
 &+\frac{1}{\pi^2}\left(
 128\operatorname{Li}_4\!\left(\frac12\right)
 +\frac{16}{3}\log^42+64\mathsf G^2-64\mathsf G
 +35\zeta(3)-2
 \right).
 \end{aligned}
\end{equation}
\end{theorem}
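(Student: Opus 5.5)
The plan is to specialize the probabilistic reformulation to $m=2$ and evaluate the resulting real double integral in closed form. Theorem~\ref{thm:fourier-main} is the alternative route, but the convolution side is better suited to producing polylogarithms.

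\textbf{Step 1: reduction to a one-dimensional expectation.} I first integrate out $u$. For fixed $w\in\C$ and $u$ uniform on $\D$, the circle average of $\log|1+uw|$ over $|u|=r$ equals $\max(0,\log(r|w|))$. Integrating against $2r\,\dd r$ gives
\[
 \frac1\pi\int_{\D}\log|1+uw|\dd A(u)=g(\log|w|),\qquad g(s)=\Bigl(s-\tfrac12+\tfrac12\ee^{-2s}\Bigr)\mathbf 1_{s>0}.
\]
Hence
\[
 \mD(P_2)=\mD(R_2)=\E\,g(Y_1+Y_2)=\int_0^\infty g(s)\,\rho^{*2}(s)\dd s,
\]
where $Y_1,Y_2$ are independent copies of $Y$.

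\textbf{Step 2: the density $\rho$.} Since $q$ maps $\D$ conformally onto the right half-plane with $q^{-1}(w)=(1-w)/(1+w)$, the variable $W=q(X)$ has density $4/(\pi|1+w|^4)$ on $\operatorname{Re}w>0$. Writing $w=\ee^{y+\ii\theta}$ with $|\theta|<\pi/2$ and integrating out $\theta$ gives $\rho(y)$ as an elementary function of $\ee^{y}$. I expect it to involve $\arctan$ (equivalently $\log$ of $1\pm\ii\ee^{\pm y}$) and rational functions of $\ee^{2y}$. The symmetry $|q(-z)|=1/|q(z)|$ shows that $\rho$ is even. As a consistency check, the Fourier transform of $\rho$ should reproduce the function $\varphi$ in \eqref{eq:phi-definition}.

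\textbf{Step 3: the double integral.} In the variable $r=\ee^{-s}\in(0,1)$, the quantity $F_2(r)=\rho^{*2}(-\log r)$ is a one-dimensional integral of a product of two such elementary kernels. By the hyperlogarithm structure from Theorem~\ref{thm:cyclotomic-main}, $\pi^2F_2$ is an iterated integral on the alphabet $\{0,\pm1,\pm\ii\}$ of weight at most $2$. I then integrate
\[
 \int_0^1\Bigl(-\log r-\tfrac12+\tfrac{r^2}{2}\Bigr)F_2(r)\,\frac{\dd r}{r},
\]
which raises the weight to at most $4$. Splitting according to the three pieces of $g$ (the $\log$ term, the constant, and the $r^2$ term) and integrating by parts against $\dd r/r$ expresses $\pi^2\mD(P_2)$ as a $\mathbb Q$-linear combination of values $G(a_1,\ldots,a_k;1)$ with $k\le4$ and $a_j\in\{0,\pm1,\pm\ii\}$. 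The lower-weight pieces produce the terms $2$, $\log2$, $\log^22$, $\mathsf G$ and $\pi^2$.

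\textbf{Step 4: reduction to the stated basis (main obstacle).} The hardest part is reducing the weight-$4$ level-$4$ values to the basis in \eqref{eq:two-factor-main}: $\operatorname{Li}_4(1/2)$, $\log^42$, $\mathsf G^2$, $\zeta(3)$, and $\pi^4$ (the latter appearing as $\pi^2$ after dividing by $\pi^2$). Genuinely new level-$4$ constants, such as $\operatorname{Im}\operatorname{Li}_4$-type or $\beta(4)$ and $\pi\,\mathsf G$-type terms, must cancel.

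I would try the following, in order:
\begin{enumerate}
 \item Use the symmetry of $\rho$ to pair each letter $\ii$ with $-\ii$, so that only real parts survive. Odd-in-$\ii$ combinations should vanish.
 \item Apply the standard shuffle, stuffle and distribution relations at level $4$. In particular, $G(\pm\ii,\ldots)$ values reduce via $z\mapsto z^2$ to level-$2$ values with argument $-1$, which are expressible through $\operatorname{Li}_4(1/2)$, $\zeta(3)\log2$, and so on.
 \item Where the relations become unwieldy, confirm the final identification with a high-precision PSLQ fit of the numerical value from Theorem~\ref{thm:fourier-main}. This fixes the rational coefficients, after which the symbolic reduction is checked term by term.
\end{enumerate}
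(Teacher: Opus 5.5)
Your Steps 1--3 are a sound reduction. Step~1 is the convolution formula \eqref{eq:convolution}, and Theorem~\ref{thm:cyclotomic-main} does make $\pi^2\mD(P_2)$ an effectively computable element of $\mathcal Z_4$. One correction: the proof of that theorem only bounds the weight of $\pi^2F_2$ by $2m-1=3$, not $2$. Your a priori bound for $\pi^2\mD(P_2)$ is therefore weight $5$, and the absence of weight-$5$ terms would itself need an argument.

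The genuine gap is Step~4. That is where the entire content of the statement lies, and none of your three tools does what you need there.
\begin{itemize}
\item Pairing $\ii$ with $-\ii$ only encodes that the answer is real. It does not make combinations antisymmetric under $\ii\mapsto-\ii$ disappear. Indeed $\pi=-2\ii(G(\ii;1)-G(-\ii;1))$, $\mathsf G$, $\mathfrak I_3=\operatorname{Im}\operatorname{Li}_3((1+\ii)/2)$, and the real weight-$4$ quantity $\mathcal A_4$ of \eqref{eq:A4-definition} are all built from such combinations. Each summand of $\mathcal A_4$ has the form $\tfrac12\sum_{\epsilon_1,\epsilon_2=\pm1}\epsilon_1\epsilon_2\,G(\ldots,\epsilon_1\ii,\ldots,\epsilon_2\ii;1)$.
\item The distribution relation for $z\mapsto z^2$ only collapses untwisted averages over $\pm\ii$ to level~$2$. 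It gives $\operatorname{Re}\operatorname{Li}_2(\ii)=-\pi^2/48$ but says nothing about $\operatorname{Im}\operatorname{Li}_2(\ii)=\mathsf G$. By itself it does not reduce the sign-twisted words in $\mathcal A_4$.
\item A PSLQ fit is not a proof. ``Checking the symbolic reduction term by term'' presupposes a complete, proven list of relations among weight-$4$ level-$4$ values, which you neither derive nor cite.
\end{itemize}

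The paper closes this gap with three ingredients your proposal lacks.
\begin{itemize}
\item \textbf{Energy identity.} Rather than computing $F_2$, it uses $(1-\varphi)^2=2(1-\varphi)-(1-\varphi^2)$ in the Fourier formula. This gives $2M_1-M_2=8\int_0^1J(r)^2r^{-5}\,\dd r$, with $J$ an explicit hyperlogarithm of weight at most $2$. An exactly verified primitive and regularized endpoint evaluation then express this integral through $\log2$, $\pi$, $\mathsf G$, $\zeta(3)$, $\mathfrak I_3$, and the single weight-$4$ combination $\mathcal A_4$.
\item \textbf{Exact relation certificate for $\mathcal A_4$.} It combines shuffle regularization, double shuffle, and distribution relations. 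Crucially, it also uses the Cayley-duality identity obtained by pulling back along $q$, which swaps $0\leftrightarrow1$, $-1\leftrightarrow\infty$, $\ii\leftrightarrow-\ii$. The result is $\mathcal A_4=\tfrac32S_{2,4}-\tfrac7{32}S_{4,4}$ plus products of lower-weight constants, where $S_{c,4}=\sum_{n\ge1}c^n/(n^4\binom{2n}{n})$.
\item \textbf{Known evaluations.} The evaluations of $S_{2,4}$ and $S_{4,4}$ in terms of $\operatorname{Li}_4(1/2)$, $\mathfrak I_3$, $\log2$ and $\pi$ are substituted. After this, $\mathfrak I_3$ and the $\mathsf G\log2$ terms cancel, and $M_1$ supplies the rest.
\end{itemize}
Some explicit, provable bridge of this kind, from the sign-twisted weight-$4$ values to $\operatorname{Li}_4(1/2)$, is exactly what your Step~4 is missing.
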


The proof of Theorem~\ref{thm:two-factor-main} uses three finite
computer-assisted checks: the derivative of a fixed hyperlogarithmic
primitive, its regularized endpoint evaluation, and a weight-$4$ identity
among level-$4$ Goncharov polylogarithm values.  The verifiers use
exact arithmetic and perform neither floating-point evaluation nor
integer-relation recognition.

Theorem~\ref{thm:cyclotomic-main} gives a constructive finite
multiple polylogarithm expression for each fixed $m$, while
Theorem~\ref{thm:two-factor-main} shows that the level-$4$ expression
for $m=2$ collapses to ordinary polylogarithms and familiar constants.
The density of $Y$ also yields the convolution formula in
Theorem~\ref{thm:convolution}.  Positivity of the characteristic function
then gives a continuous interpolation in the factor parameter whose derivatives alternate
in sign, as well as the strict sign pattern for all forward differences
of the sequence $\mD(P_m)$; see Corollary~\ref{cor:finite-differences}.

For the asymptotic behavior, let $\zeta$ denote the Riemann zeta function
and set
\begin{equation}\label{eq:sigma-kappa}
 \sigma:=\left(\frac{\pi^2}{4}-2\log2\right)^{1/2}>0,
 \qquad
 \kappa_4:=\frac{\pi^4}{8}-12(\log2)^2-\frac92\zeta(3).
\end{equation}
The variance and fourth cumulant of $Y$ are $\sigma^2$ and $\kappa_4$,
respectively.

\begin{theorem}\label{thm:asymptotic-main}
As $m\to\infty$ through the positive integers,
\begin{equation}\label{eq:asymptotic-main}
 \mD(P_m)
 =\frac{\sigma}{\sqrt{2\pi}}\sqrt m-\frac14
 +\frac{1}{\sqrt{2\pi m}}
 \left(\frac{1}{4\sigma}-\frac{\kappa_4}{24\sigma^3}\right)
 +o\!\left(m^{-1/2}\right).
\end{equation}
\end{theorem}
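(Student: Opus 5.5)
The plan is to derive \eqref{eq:asymptotic-main} from the Fourier representation in Theorem~\ref{thm:fourier-main} by a Laplace-type analysis near $t=0$. We use the partial fraction $\frac{4}{t^2(t^2+4)}=\frac1{t^2}-\frac1{t^2+4}$ and $\int_0^\infty \frac{\dd t}{t^2+4}=\frac{\pi}{4}$ to split
\[
 \mD(P_m)=\underbrace{\frac1\pi\int_0^\infty\frac{1-\varphi(t)^m}{t^2}\dd t}_{I_m}
 -\frac14+\underbrace{\frac1\pi\int_0^\infty\frac{\varphi(t)^m}{t^2+4}\dd t}_{J_m}.
\]
This already isolates the constant $-\tfrac14$.

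\emph{Step 1: local behaviour of $\varphi$.} From \eqref{eq:phi-definition}, $\varphi$ is real, even and smooth. Since $q(-z)=1/q(z)$ and $X\mapsto -X$ preserves the law of $X$, the variable $Y$ is symmetric. I will expand $\log\varphi(t)=-\frac{\sigma^2t^2}{2}+\frac{\kappa_4t^4}{24}+O(t^6)$ near $0$, using the stated values of the variance and fourth cumulant, or equivalently the Taylor series of $\sech$ and of $\psi$. I also need two global facts. First, $0<\varphi(t)\le 1-c\min(t^2,1)$ for some $c>0$; positivity is already used in the paper, and the strict bound $|\varphi|<1$ for $t\ne0$ holds because $Y$ has a density. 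Second, $\varphi(t)\to0$ as $t\to\infty$, which the $\sech$ factor times a logarithmic growth shows. Together these imply that, outside $|t|\le m^{-1/2+\epsilon}$, both integrands contribute $O(\ee^{-cm^{2\epsilon}})$ after integrating against $t^{-2}$ or $(t^2+4)^{-1}$.

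\emph{Step 2: the term $J_m$.} On the small range, $\varphi^m=\ee^{-m\sigma^2t^2/2}(1+O(mt^4))$ and $\frac1{t^2+4}=\frac14+O(t^2)$. Hence
\[
J_m=\frac1{4\pi}\sqrt{\frac{\pi}{2m}}\,\frac1\sigma+O(m^{-3/2})=\frac{1}{4\sigma\sqrt{2\pi m}}+O(m^{-3/2}).
\]

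\emph{Step 3: the term $I_m$, the main obstacle.} Because of the $t^{-2}$ weight, this term cannot be split naively. I substitute $t=s/\sqrt m$, so that
\[
I_m=\frac{\sqrt m}{\pi}\int_0^\infty\frac{1-\varphi(s/\sqrt m)^m}{s^2}\dd s .
\]
On $s\le m^{\epsilon}$ I write $\varphi(s/\sqrt m)^m=\ee^{-\sigma^2s^2/2}\bigl(1+\frac{\kappa_4 s^4}{24m}+R\bigr)$ with $R=O(s^8m^{-2}+s^6m^{-2})$; the $O(t^6)$ term contributes $ms^6/m^3$. The leading term is
\[
\frac{\sqrt m}{\pi}\int_0^\infty\frac{1-\ee^{-\sigma^2s^2/2}}{s^2}\dd s=\frac{\sigma\sqrt m}{\sqrt{2\pi}}.
\]
The correction is
\[
-\frac{\kappa_4}{24\pi\sqrt m}\int_0^\infty s^2\ee^{-\sigma^2s^2/2}\dd s=-\frac{\kappa_4}{24\sigma^3\sqrt{2\pi m}}.
\]
The tail $s>m^\epsilon$ of the Gaussian pieces is negligible. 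The region near $s=0$ causes no difficulty, since the integrand is bounded there, being $O(1)$ uniformly by the quadratic bound on $1-\varphi$. The delicate point is making the remainder uniform across the matching region $s\asymp m^{\epsilon}$. This requires the bound $|\varphi(t)^m-\ee^{-m\sigma^2t^2/2}(1+m\kappa_4t^4/24)|\le Cm^2t^8\ee^{-m\sigma^2t^2/4}+Cmt^6\ee^{-m\sigma^2t^2/4}$ for $|t|\le\delta$, which I will prove with the inequality $|\ee^a-\ee^b|\le|a-b|\ee^{\max(a,b)}$. With it, the remainder contributes $O(m^{-1})=o(m^{-1/2})$.

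Adding the three pieces gives \eqref{eq:asymptotic-main}. As a consistency check, $I_m=\frac12\E|S_m|$ with $S_m$ the sum of $m$ independent copies of $Y$, and the expansion obtained agrees with the Edgeworth expansion of $\E|S_m|$.
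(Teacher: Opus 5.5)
Your proposal is correct and takes essentially the same route as the paper. Both use the same partial-fraction split into $I_m$, $J_m$ and $-\tfrac14$, the rescaling $t\propto s/\sqrt m$, the fourth-order cumulant expansion of $\log\varphi$, the bound $|\ee^a-\ee^b|\le|a-b|\ee^{\max(a,b)}$ near the origin, and $\sup_{t\ge\delta}\varphi(t)<1$ for the tails; the only difference is that you control the remainder by an explicit second-order Gaussian-envelope bound (which actually gives $O(m^{-3/2})$, better than the $O(m^{-1})$ you state), whereas the paper applies dominated convergence to $m\bigl(\ee^{-v^2/2}-a_m(v)\bigr)/v^2$.
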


\subsection*{Paper organization}

In Section~\ref{sec:cayley}, we determine the density and characteristic
function of $Y=\log|q(X)|$, the Mellin transform of $|q(X)|$, and the
cumulants of $Y$ used later.  In Section~\ref{sec:representations}, we
express the areal Mahler measures in terms of sums of independent copies
of $Y$ and derive the convolution and Fourier integral representations.
In Section~\ref{sec:cyclotomic}, we prove the cyclotomic closure theorem
and carry out an exact hyperlogarithmic reduction of the case $m=2$.
In Section~\ref{sec:variation}, we use the Fourier formula to construct a
continuous interpolation in the number of factors, establish the
alternating-sign results for derivatives and forward differences, prove
strict subadditivity, and derive the large-$m$ asymptotic expansion.

\section{The Cayley-transform distribution}\label{sec:cayley}

The map $q(z)=(1-z)/(1+z)$ sends $\D$ conformally onto the right half-plane. Throughout this section, $X$ is uniformly distributed with respect to normalized area measure on $\D$, that is,
\[
 \mathbb P(X\in E)=\frac{1}{\pi}\int_E \dd A(z)
\]
for every measurable set $E\subseteq\D$, and $Y=\log|q(X)|$.

\subsection{Density of the logarithmic modulus}

The change of variables from the disk to the right half-plane gives the
law of $Y$ explicitly.

\begin{proposition}\label{prop:density}
The random variable $Y$ has the even density
\begin{equation}\label{eq:density}
 \begin{aligned}
 \rho(y)
 &=\frac{1}{\pi}\int_{-\pi/2}^{\pi/2}
 \frac{\dd\theta}{(\cosh y+\cos\theta)^2}\\
 &=
 \begin{cases}
 \displaystyle
 \frac{2\cosh y}{\pi\sinh^2|y|}\left(
 \frac{\arctan(\sinh|y|)}{\sinh|y|}
 -\frac{1}{\cosh^2y}
 \right),&y\neq0,\\[4mm]
 \displaystyle \frac{4}{3\pi},&y=0.
 \end{cases}
 \end{aligned}
\end{equation}
\end{proposition}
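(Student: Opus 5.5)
I will compute the law of $W:=q(X)$ in the right half-plane and then pass to the logarithmic modulus in polar coordinates. The inverse of $q$ is $z=q^{-1}(w)=(1-w)/(1+w)$, with $|dz/dw|=2/|1+w|^2$. Since $X$ has density $1/\pi$ on $\D$, the variable $W$ has density
\[
 \frac{4}{\pi|1+w|^4},\qquad \operatorname{Re}w>0 .
\]
Writing $w=\ee^{y+\ii\theta}$ with $\theta\in(-\pi/2,\pi/2)$, the area element is $\dd A(w)=\ee^{2y}\dd y\dd\theta$. We also have
\[
 |1+w|^2=1+2\ee^{y}\cos\theta+\ee^{2y}=2\ee^{y}(\cosh y+\cos\theta).
\]
Hence $|1+w|^4=4\ee^{2y}(\cosh y+\cos\theta)^2$, and the factors $\ee^{2y}$ cancel. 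Integrating out $\theta$ gives the first expression for $\rho(y)$ in \eqref{eq:density}. Evenness is immediate, since the integrand depends on $y$ only through $\cosh y$; equivalently, $z\mapsto -z$ preserves the area measure and sends $q$ to $1/q$.

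For the closed form, set $a=\cosh y\ge1$ and evaluate
\[
 I(a)=\int_{-\pi/2}^{\pi/2}\frac{\dd\theta}{(a+\cos\theta)^2}=-\frac{\dd}{\dd a}J(a),
 \qquad J(a)=\int_{-\pi/2}^{\pi/2}\frac{\dd\theta}{a+\cos\theta}.
\]
The Weierstrass substitution $s=\tan(\theta/2)$, $s\in(-1,1)$, gives
\[
 J(a)=\int_{-1}^{1}\frac{2\,\dd s}{(a+1)+(a-1)s^2}
 =\frac{4}{\sqrt{a^2-1}}\arctan\sqrt{\frac{a-1}{a+1}}\qquad (a>1).
\]
With $a=\cosh y$ we have $\sqrt{a^2-1}=\sinh|y|$ and $\sqrt{(a-1)/(a+1)}=\tanh(|y|/2)$. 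Using $2\arctan(\tanh(u/2))=\arctan(\sinh u)$ (the Gudermannian identity), this becomes
\[
 J=\frac{2\arctan(\sinh|y|)}{\sinh|y|}.
\]

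Next I differentiate with respect to $a$, using $\dd/\dd a=(1/\sinh y)\,\dd/\dd y$ for $y>0$. Let $S=\sinh y$ and $C=\cosh y$. Since $\dd\arctan(S)/\dd y=C/(1+S^2)=1/C$,
\[
 \frac{\dd J}{\dd y}=2\left(\frac{1}{S}-\frac{C\arctan S}{S^2}\right),
\]
so that
\[
 -\frac{1}{S}\frac{\dd J}{\dd y}=\frac{2C}{S^2}\left(\frac{\arctan S}{S}-\frac{1}{C^2}\right).
\]
Multiplying by $1/\pi$ yields the stated formula for $y\ne0$. For $y=0$, direct evaluation gives
\[
 \int_{-\pi/2}^{\pi/2}\frac{\dd\theta}{(1+\cos\theta)^2}=\int_{-1}^{1}\frac{1+s^2}{2}\,\dd s=\frac43,
\]
hence $\rho(0)=4/(3\pi)$. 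This also agrees with the limit of the closed form, as a check.

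The main point of care is bookkeeping rather than a genuine obstacle: the Jacobian factors $\ee^{2y}$ must cancel correctly, and the derivative must be simplified into exactly the displayed bracket. I would do this last simplification first, using the identity $1+\sinh^2y=\cosh^2y$.
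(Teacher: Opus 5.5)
Your proposal is correct and follows the paper's proof essentially step for step: area Jacobian $4/|1+w|^4$, log-polar coordinates $w=\ee^{y+\ii\theta}$, then differentiation of $\int_{-\pi/2}^{\pi/2}\dd\theta/(a+\cos\theta)$ in $a$ at $a=\cosh y$. The only difference is that you derive the closed form of this auxiliary integral via the Weierstrass substitution and the Gudermannian identity, where the paper simply quotes it.

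There is one slip to fix. Since $\frac{\dd}{\dd y}\arctan S=1/C$, the first term of your $\dd J/\dd y$ should be $1/(SC)$ rather than $1/S$. With that correction your next display follows; as written, it would give $1/C$ in place of $1/C^2$ inside the bracket.
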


\begin{proof}
Write $w=q(z)$. The inverse map is
\[
 z=q^{-1}(w)=\frac{1-w}{1+w},
 \qquad \operatorname{Re}w>0,
\]
and its area Jacobian is $4/|1+w|^4$. In polar coordinates,
$w=r\ee^{\ii\theta}$, where $r>0$ and
$-\pi/2<\theta<\pi/2$. The normalized area measure on $\D$ becomes
\[
 \frac{1}{\pi}\dd A(z)
 =
 \frac{4}{\pi|1+w|^4}\dd A(w)
 =
 \frac{4r}{\pi(1+r^2+2r\cos\theta)^2}
 \dd r\dd\theta.
\]
Setting $y=\log r$, we obtain
\[
 \frac{1}{\pi}\dd A(z)
 =
 \frac{1}{\pi(\cosh y+\cos\theta)^2}
 \dd y\dd\theta.
\]
Integrating with respect to $\theta$ proves the first expression in
\eqref{eq:density}.

When $y\neq 0$, put
\[
 H(a)=\int_{-\pi/2}^{\pi/2}\frac{\dd\theta}{a+\cos\theta}=\frac{2\arctan\sqrt{a^2-1}}{\sqrt{a^2-1}}\qquad(a>1).
\]
After differentiating with respect to $a$,
$$H'(a)=-\int_{-\pi/2}^{\pi/2}\frac{\dd\theta}{\left(a+\cos\theta\right)^2}=2\frac{\sqrt{a^2-1}-a^2\arctan\sqrt{a^2-1}}{a\left(a^2-1\right)^{3/2}}.$$
Setting $a=\cosh y$ gives the second expression in \eqref{eq:density} when
$y\neq0$.

When $y=0$, the integral expression gives
\[
 \rho(0)=\frac{1}{\pi}\int_{-\pi/2}^{\pi/2}
 \frac{\dd\theta}{(1+\cos\theta)^2}=\frac{4}{3\pi}.
\]
The formula is even in $y$.
\end{proof}

The first expression in \eqref{eq:density} immediately implies
\[
 0<\rho(y)\leq\sech^2y\qquad(y\in\R).
\]
In particular, $Y$ has finite absolute moments of every order.

\subsection{Mellin transform and characteristic function}

The Mellin transform of $|q(X)|$ is an areal zeta Mahler function in the
sense used in \cite{LalinRoy2024a}.

\begin{proposition}\label{prop:zeta-cayley}
For $|\operatorname{Re}s|<2$, define
\[
 Z_{\D}(s;q):=\frac{1}{\pi}\int_{\D}|q(z)|^s\,\dd A(z).
\]
Then
\begin{equation}\label{eq:zeta-cayley}
 Z_{\D}(s;q)=\sec\!\left(\frac{\pi s}{2}\right)
 \left\{
 1-\frac{s^2}{4}\left[
 \psi\!\left(1+\frac{s}{4}\right)
 +\psi\!\left(1-\frac{s}{4}\right)
 -\psi\!\left(\frac12+\frac{s}{4}\right)
 -\psi\!\left(\frac12-\frac{s}{4}\right)
 \right]
 \right\}.
\end{equation}
The apparent singularities of the right-hand side at $s=\pm1$ are
removable.
\end{proposition}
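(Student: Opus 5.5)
We compute the transform from the polar-coordinate change of variables already carried out in the proof of Proposition~\ref{prop:density}. Writing $w=q(z)=r\ee^{\ii\theta}$ with $r=\ee^y$, we have
\[
Z_{\D}(s;q)=\E\bigl[\ee^{sY}\bigr]=\int_{\R}\ee^{sy}\rho(y)\dd y=\frac{1}{\pi}\int_{-\pi/2}^{\pi/2}\int_{\R}\frac{\ee^{sy}}{(\cosh y+\cos\theta)^2}\dd y\dd\theta .
\]
Since $\rho(y)\le\sech^2 y$, the integral converges absolutely and defines an analytic function on $|\operatorname{Re}s|<2$. It therefore suffices to prove \eqref{eq:zeta-cayley} for real $s\in(-2,2)$, or even for $s$ in a smaller interval, and then continue analytically. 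The right-hand side is analytic on the strip apart from the apparent poles at $s=\pm1$. Once the identity holds on an interval, the continuation automatically shows that these singularities are removable, and one can also check directly that the brace vanishes at $s=\pm1$.

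\textbf{Step 1: the inner integral in $y$.} For $a=\cos\theta\in(0,1]$, write $a=\cos\theta$ and use the standard formula
\[
\int_{\R}\frac{\ee^{sy}}{\cosh y+\cos\theta}\dd y=\frac{2\pi\sin(s\theta)}{\sin(\pi s)\sin\theta},\qquad |\operatorname{Re}s|<1 .
\]
This follows by substituting $x=\ee^y$, which turns the integral into $\int_0^\infty x^{s}\,\dd x/(x^2+2x\cos\theta+1)$, a classical Mellin integral. Differentiating with respect to $\cos\theta$, that is, applying $-\partial/\partial a$, gives the squared denominator:
\[
\int_{\R}\frac{\ee^{sy}\dd y}{(\cosh y+\cos\theta)^2}=\frac{1}{\sin\theta}\,\frac{\partial}{\partial\theta}\!\left[\frac{2\pi\sin(s\theta)}{\sin(\pi s)\sin\theta}\right].
\]

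\textbf{Step 2: the outer integral in $\theta$.} We now integrate over $\theta\in(-\pi/2,\pi/2)$. By evenness we may take twice the integral over $(0,\pi/2)$. Expanding the derivative produces
\[
\frac{s\cos(s\theta)}{\sin^2\theta}-\frac{\sin(s\theta)\cos\theta}{\sin^3\theta}.
\]
We integrate by parts, or write the combination as $\partial_\theta\bigl(-\sin(s\theta)/\sin^2\theta\bigr)\cdot\tfrac12$ plus a remainder. The individual terms are singular at $\theta=0$, but their combination is regular, since the original integrand is bounded near $\theta=0$. After this step we should be left with boundary terms at $\theta=\pi/2$, which produce the elementary ``$1$'' and the factor $\sec(\pi s/2)$ via $\sin(\pi s)=2\sin(\pi s/2)\cos(\pi s/2)$. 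The remaining term is proportional to $s^2\int_0^{\pi/2}\sin(s\theta)/\sin\theta\,\dd\theta$, or an equivalent integral involving $\cos(s\theta)$.

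\textbf{Step 3: the digamma evaluation (main obstacle).} The hardest part is evaluating the leftover integral
\[
\int_0^{\pi/2}\frac{\sin(s\theta)}{\sin\theta}\dd\theta
\]
and its $\cos$-companion in closed form. We would expand $1/\sin\theta$, or more effectively use $\sin(s\theta)/\sin\theta$ together with the partial fractions of $\csc$, or alternatively compute the Mellin integral $\int_0^\infty x^{s}(1+x)^{-4}\ldots$ directly. The target is the combination
\[
\psi(1+s/4)+\psi(1-s/4)-\psi(1/2+s/4)-\psi(1/2-s/4),
\]
which is $\sum_{k\ge0}(-1)^k\bigl[\tfrac{4}{2k+2+s}+\tfrac{4}{2k+2-s}\bigr]$ up to sign. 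Hence expanding the integrand in a series in $\ee^{-2\ii\theta}$, or termwise in $\cos((2k+1)\theta)$, should produce exactly these alternating sums with denominators $2k+2\pm s$. Finally, we would verify the normalization by checking $Z_{\D}(0;q)=1$. The brace must equal $1$ at $s=0$, which it does.
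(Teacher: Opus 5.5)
Your Steps 1 and 2 follow the paper's argument. That covers the kernel $\pi\sin(s\theta)/(\sin(\pi s)\sin\theta)$, differentiation in $\theta$ to square the denominator, the reduction to $\frac{4}{\sin(\pi s)}\int_0^{\pi/2}\frac{1}{\sin\theta}\frac{\dd}{\dd\theta}\bigl(\frac{\sin(s\theta)}{\sin\theta}\bigr)\dd\theta$, and the cancellation of the $s/(2\varepsilon)$ boundary singularities. The gap begins where you guess the outcome of the integration by parts instead of carrying it out. Integrate by parts against $\cos\theta/\sin^3\theta=\frac{\dd}{\dd\theta}\bigl(-\frac{1}{2\sin^2\theta}\bigr)$, then against $\csc^2\theta=\frac{\dd}{\dd\theta}(-\cot\theta)$. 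This gives
\[
 \int_0^{\pi/2}\frac{1}{\sin\theta}\frac{\dd}{\dd\theta}\Bigl(\frac{\sin(s\theta)}{\sin\theta}\Bigr)\dd\theta
 =\frac12\sin\Bigl(\frac{\pi s}{2}\Bigr)-\frac{s^2}{2}\int_0^{\pi/2}\sin(s\theta)\cot\theta\,\dd\theta,
\]
so the leftover integrand is $\sin(s\theta)\cot\theta$, not $\sin(s\theta)/\sin\theta$. This matters. The integral you plan to evaluate in Step 3 is
\[
 \int_0^{\pi/2}\frac{\sin(s\theta)}{\sin\theta}\,\dd\theta
 =\cos\Bigl(\frac{\pi s}{2}\Bigr)\sum_{k\geq0}(-1)^k\Bigl(\frac{1}{2k+1-s}-\frac{1}{2k+1+s}\Bigr).
\]
Its prefactor is $\cos(\pi s/2)$ and its digamma arguments are $\frac14\pm\frac s4$, $\frac34\pm\frac s4$. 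With it, $\sin(\pi s/2)$ no longer factors out, so neither the $\sec(\pi s/2)$ structure nor the arguments $1\pm\frac s4$, $\frac12\pm\frac s4$ of \eqref{eq:zeta-cayley} can appear. The two integrals are related through $\sin(s\theta)\cot\theta=\frac{\sin((s+1)\theta)+\sin((s-1)\theta)}{2\sin\theta}$, but they are not proportional.

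Second, Step 3, which you correctly call the main obstacle, is only a list of strategies. The expansions you suggest do not converge: $\cot\theta=2\sum_{n\geq1}\sin(2n\theta)$ and $\csc\theta=2\sum_{k\geq0}\sin((2k+1)\theta)$ hold only in an Abel or distributional sense, so termwise integration needs justification. One way to make your idea rigorous uses, for $0<r<1$,
\[
 2\sum_{n\geq1}r^n\sin(2n\theta)=\frac{2r\sin(2\theta)}{(1-r)^2+4r\sin^2\theta}\in[0,\cot\theta],
\]
which tends to $\cot\theta$ as $r\to1^-$. Since $|\sin(s\theta)|\cot\theta$ is bounded for $s$ in a compact set, dominated convergence applies. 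Combine it with $\int_0^{\pi/2}\sin(s\theta)\sin(2n\theta)\,\dd\theta=\frac{(-1)^{n+1}}{2}\sin(\frac{\pi s}{2})\bigl(\frac{1}{2n-s}+\frac{1}{2n+s}\bigr)$ and Abel's theorem. This yields $\int_0^{\pi/2}\sin(s\theta)\cot\theta\,\dd\theta=\sin(\frac{\pi s}{2})\sum_{n\geq1}(-1)^{n+1}\frac{4n}{4n^2-s^2}$, and your correct digamma identification then finishes. The paper does essentially this one step later: it integrates by parts once more to $-s\int_0^{\pi/2}\cos(s\theta)\log(\sin\theta)\,\dd\theta$. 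It then uses the Fourier series of $\log|1-r\ee^{2\ii\theta}|$ and lets $r\to1^-$ by dominated convergence.
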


\begin{proof}
First, the integral can be rewritten as $$Z_\D(s;q)=\frac{1}{\pi}\int_\D\ee^{s\log|q(z)|}\dd A(z)=\E\ee^{s\log|q(X)|}=\E\ee^{sY}.$$
The bound following Proposition~\ref{prop:density} shows that
$Z_{\D}(s;q)=\E\ee^{sY}$ is holomorphic on the strip
$|\operatorname{Re}s|<2$.  We first work in the smaller strip
$|\operatorname{Re}s|<1$. The change of variables used in the proof of Proposition~\ref{prop:density} gives
\[
 Z_{\D}(s;q)=\frac{4}{\pi}
 \int_{-\pi/2}^{\pi/2}\dd\theta\int_0^{+\infty}
 \frac{r^{s+1}}{(1+2r\cos\theta+r^2)^2}\dd r=\frac{8}{\pi}
 \int_0^{\pi/2}\dd\theta\int_0^{+\infty}
 \frac{r^{s+1}}{(1+2r\cos\theta+r^2)^2}\dd r.
\]
For $0<\theta\leq\pi/2$, set
\[
 \mathcal K_s(\theta):=\int_0^{+\infty}
 \frac{r^s}{1+2r\cos\theta+r^2}\,\dd r.
\]
A partial-fraction decomposition gives
\begin{align*}
    \mathcal K_s(\theta)&=\frac{1}{\ee^{\ii\theta}-\ee^{-\ii\theta}}\int_0^{+\infty}\left(\frac{\ee^{\ii\theta}r^s}{1+r\ee^{\ii\theta}}-\frac{\ee^{-\ii\theta}r^s}{1+r\ee^{-\ii\theta}}\right)\!\dd r\\
    &=\frac1{2\ii\sin\theta}\left(\ee^{-\ii s\theta}\int_0^{+\infty}\frac{(r\ee^{\ii\theta})^s}{1+r\ee^{\ii\theta}}\ee^{\ii\theta}\dd r-\ee^{\ii s\theta}\int_0^{+\infty}\frac{(r\ee^{-\ii\theta})^s}{1+r\ee^{-\ii\theta}}\ee^{-\ii\theta}\dd r\right).
\end{align*}
For $-1<\operatorname{Re}s<0$, use the principal branch
$z^s=\exp(s\operatorname{Log}z)$.  Rotating each of the rays
$\arg z=\pm\theta$ to the positive real axis crosses no pole of
$(1+z)^{-1}$.  The circular arcs at the origin and at infinity
contribute $O(\varepsilon^{\operatorname{Re}s+1})$ and
$O(R^{\operatorname{Re}s})$, respectively, and hence vanish as
$\varepsilon\to0^+$ and $R\to\infty$.  Euler's beta integral then gives
\[
 \int_0^{+\infty}
 \frac{(r\ee^{\pm\ii\theta})^s}{1+r\ee^{\pm\ii\theta}}
 \ee^{\pm\ii\theta}\,\dd r
 =\int_0^{+\infty}\frac{x^s}{1+x}\,\dd x
 =-\frac{\pi}{\sin(\pi s)}.
\]
Therefore, we have
\begin{equation}\label{eq:J-complex}
 \mathcal K_s(\theta)=
 \frac{\pi\sin(s\theta)}{\sin(\pi s)\sin\theta}.
\end{equation}
Both sides of \eqref{eq:J-complex} are holomorphic for
$|\operatorname{Re}s|<1$, after removing the singularity at $s=0$ on the right. Therefore, the equality \eqref{eq:J-complex} holds on the whole strip $|\operatorname{Re}s|<1$.  By differentiating with respect to $\theta$,
$$\mathcal K_s'(\theta)=\int_0^{+\infty}\frac{2r^{s+1}\sin\theta}{\left(
1+2r\cos\theta+r^2\right)^2}\dd r=\frac{\pi}{\sin(\pi s)}\frac{\dd}{\dd\theta}\left(\frac{\sin(s\theta)}{\sin\theta}\right).$$
Thus we obtain
\[
 Z_{\D}(s;q)
 =\frac{4}{\sin(\pi s)}
 \int_0^{\pi/2}\frac{1}{\sin\theta}
 \frac{\dd}{\dd\theta}\left(
 \frac{\sin(s\theta)}{\sin\theta}
 \right)\dd\theta.
\]

The endpoint at $\theta=0$ requires a cancellation.  For
$0<\varepsilon<\pi/2$, consider
\[
 I_\varepsilon(s):=\int_\varepsilon^{\pi/2}
 \frac{1}{\sin\theta}
 \frac{\dd}{\dd\theta}\left(
 \frac{\sin(s\theta)}{\sin\theta}
 \right)\dd\theta.
\]
Applying integration by parts three times yields
\[
 I_\varepsilon(s)
 =\frac12\sin\!\left(\frac{\pi s}{2}\right)
   -\frac12\sin(s\varepsilon)\csc^2\varepsilon
   +\frac{s}{2}\cos(s\varepsilon)\cot\varepsilon-\frac{s^2}{2}\int_\varepsilon^{\pi/2}
       \sin(s\theta)\cot\theta\,\dd\theta.
 \]
As $\varepsilon\to0^+$,
\[
 \sin(s\varepsilon)\csc^2\varepsilon
 =\frac{s}{\varepsilon}+O(\varepsilon),
 \qquad
 s\cos(s\varepsilon)\cot\varepsilon
 =\frac{s}{\varepsilon}+O(\varepsilon),
\]
locally uniformly in $s$.  The singular terms therefore cancel. It follows that
$$\int_0^{\pi/2}\frac{1}{\sin\theta}
 \frac{\dd}{\dd\theta}\left(
 \frac{\sin(s\theta)}{\sin\theta}
 \right)\dd\theta=\frac12\sin\!\left(\frac{\pi s}{2}\right)-\frac{s^2}{2}\int_0^{\pi/2}\sin(s\theta)\cot\theta\,\dd\theta.$$
Hence, we have
\begin{equation}\label{eq:zeta-C}
 Z_{\D}(s;q)
 =\sec\!\left(\frac{\pi s}{2}\right)
 \bigl(1-s^2\mathcal C(s)\bigr),
 \qquad
 \mathcal C(s):=\frac{1}{\sin(\pi s/2)}
 \int_0^{\pi/2}\sin(s\theta)\cot\theta\,\dd\theta.
\end{equation}

It remains to evaluate $\mathcal C(s)$.  Integration by parts gives
\[
 \mathcal{C}(s)=-\frac{s}{\sin(\pi s/2)}\int_0^{\pi/2}\cos(s\theta)\log(\sin\theta)\,\dd\theta.
\]
For $0<r<1$, consider the Fourier expansion $$\log\left|1-r\ee^{2\ii\theta}\right|=-\sum_{n=1}^\infty\frac{r^n\cos(2n\theta)}{n}.$$
We have
\begin{align*}
    \int_0^{\pi/2}\cos(s\theta)\log\left|1-r\ee^{2\ii\theta}\right|\,\dd\theta&=-\int_0^{\pi/2}\sum_{n=1}^\infty\frac{r^n\cos(2n\theta)}{n}\cos(s\theta)\dd\theta\\
    &=-\sum_{n=1}^\infty\frac{r^n}{n}\int_0^{\pi/2}\cos(2n\theta)\cos(s\theta)\dd\theta\\
    &=s\sin\!\left(\frac{\pi s}{2}\right)\sum_{n=1}^\infty\frac{(-r)^n}{n(4n^2-s^2)},
\end{align*}
where the legitimacy of termwise integration is guaranteed by the convergence of $\sum_{n=1}^\infty r^n/n$.

For $1/2\leq r<1$ and $0<\theta<\pi/2$, we have
\[
 \frac12\log2+\log(\sin\theta)
 \leq\log\left|1-r\ee^{2\ii\theta}\right|
 \leq\log2.
\]
Hence, for $s$ in a fixed compact set
$K\subset\{s\in\C:|\operatorname{Re}s|<1\}$,
\[
 \left|\cos(s\theta)
 \log\left|1-r\ee^{2\ii\theta}\right|\right|
 \leq C_K\bigl(1+|\log(\sin\theta)|\bigr),
\]
and the right-hand side is integrable on $(0,\pi/2)$.  The dominated
convergence theorem therefore gives
\begin{align*}
 \lim_{r\to1^-}\int_0^{\pi/2}\cos(s\theta)
 \log\left|1-r\ee^{2\ii\theta}\right|\,\dd\theta
 &=\int_0^{\pi/2}\cos(s\theta)\log(2\sin\theta)\,\dd\theta\\
 &=\frac{\log2}{s}\sin\!\left(\frac{\pi s}{2}\right)
   +\int_0^{\pi/2}\cos(s\theta)\log(\sin\theta)\,\dd\theta,
\end{align*}
with the quotient at $s=0$ interpreted by continuity.  The series on the
right-hand side of the preceding Fourier calculation converges locally
uniformly in $s$ on $|\operatorname{Re}s|<1$.  Hence, letting $r\to1^-$ gives
\[
 \int_0^{\pi/2}\cos(s\theta)\log(\sin\theta)\,\dd\theta
 =\sin\!\left(\frac{\pi s}{2}\right)
 \left[-\frac{\log2}{s}
 +s\sum_{n=1}^\infty\frac{(-1)^n}{n(4n^2-s^2)}\right].
\]
It follows that
\begin{equation}\label{eq:C-series}
 \mathcal C(s)
 =\log2+s^2\sum_{n=1}^\infty
 \frac{(-1)^n}{n(s^2-4n^2)}
 =4\sum_{n=1}^\infty
 \frac{(-1)^{n+1}n}{4n^2-s^2}.
\end{equation}
The first series in \eqref{eq:C-series} converges locally uniformly away
from the even integers.  The standard identity
\[
 \sum_{n=0}^\infty\frac{(-1)^n}{n+z}
 =\frac12\left[
 \psi\!\left(\frac{z+1}{2}\right)
 -\psi\!\left(\frac{z}{2}\right)
 \right]
 \qquad(\operatorname{Re}z>0),
\]
applied to the second series gives
\[
 \mathcal C(s)=\frac14\left[
 \psi\!\left(1+\frac{s}{4}\right)
 +\psi\!\left(1-\frac{s}{4}\right)
 -\psi\!\left(\frac12+\frac{s}{4}\right)
 -\psi\!\left(\frac12-\frac{s}{4}\right)
 \right].
\]
Substitution into \eqref{eq:zeta-C} proves \eqref{eq:zeta-cayley} for
$|\operatorname{Re}s|<1$.  The right-hand side is meromorphic on
$|\operatorname{Re}s|<2$, and its difference from the holomorphic
function $Z_{\D}(s;q)$ vanishes on a nonempty open subset.  Uniqueness of
meromorphic continuation proves the identity throughout the larger strip
and shows that the possible poles at $s=\pm1$ are removable.
\end{proof}

Restricting Proposition~\ref{prop:zeta-cayley} to the imaginary axis gives
the Fourier transform of the density in Proposition~\ref{prop:density}.

\begin{corollary}\label{cor:characteristic}
For every $t\in\R$,
\[
 \E\ee^{\ii tY}=\varphi(t),
\]
where $\varphi$ is defined in \eqref{eq:phi-definition}.
\end{corollary}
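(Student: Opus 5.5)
The plan is to specialize Proposition~\ref{prop:zeta-cayley} to the imaginary axis. Since $\E\ee^{sY}=Z_{\D}(s;q)$ is holomorphic on the strip $|\operatorname{Re}s|<2$, as established in the proof of that proposition, I would set $s=\ii t$ with $t\in\R$. Then $\E\ee^{\ii tY}=Z_{\D}(\ii t;q)$, and it remains to show that the right-hand side of \eqref{eq:zeta-cayley} at $s=\ii t$ equals $\varphi(t)$. The points $s=\pm1$ are not on the imaginary axis, and $\sec(\pi\ii t/2)$ is finite for all real $t$, so no removable-singularity issues arise.

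The evaluation proceeds in three steps.
\begin{enumerate}
\item Since $\cos(\ii x)=\cosh x$, the prefactor is $\sec(\pi\ii t/2)=\sech(\pi t/2)$.
\item Since $s^2=-t^2$, the term $-\tfrac{s^2}{4}[\cdots]$ becomes $+\tfrac{t^2}{4}[\cdots]$.
\item The bracket becomes
\[
\psi\!\left(1+\tfrac{\ii t}{4}\right)+\psi\!\left(1-\tfrac{\ii t}{4}\right)-\psi\!\left(\tfrac12+\tfrac{\ii t}{4}\right)-\psi\!\left(\tfrac12-\tfrac{\ii t}{4}\right).
\]
Because $\psi$ is real on the positive reals, the reflection principle gives $\psi(\bar z)=\overline{\psi(z)}$. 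Hence $\psi(1-\ii t/4)=\overline{\psi(1+\ii t/4)}$, and similarly for the half-integer terms. The bracket therefore equals $2\operatorname{Re}\bigl(\psi(1+\ii t/4)-\psi(\tfrac12+\ii t/4)\bigr)$.
\end{enumerate}
Multiplying by $t^2/4$ gives $\tfrac{t^2}{2}\operatorname{Re}(\cdots)$, which matches \eqref{eq:phi-definition}.

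No step is a real obstacle. The only point needing care is that the identity \eqref{eq:zeta-cayley} holds at $s=\ii t$ for every real $t$, including $t=0$, where both sides equal $1$. This is guaranteed because the imaginary axis lies in $|\operatorname{Re}s|<1$, where the identity was proved directly, and the digamma arguments $1\pm\ii t/4$ and $\tfrac12\pm\ii t/4$ have positive real part, so they avoid the poles of $\psi$.
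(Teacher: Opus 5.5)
Your proposal is correct and follows essentially the same route as the paper: it sets $s=\ii t$ in \eqref{eq:zeta-cayley}, uses $\sec(\ii x)=\sech x$ and $s^2=-t^2$, and combines the conjugate digamma terms via $\psi(\overline z)=\overline{\psi(z)}$. Your added remarks are accurate and make explicit what the paper leaves implicit: the imaginary axis lies in the strip where the identity was proved directly, the case $t=0$ is covered, and the digamma arguments avoid the poles of $\psi$.
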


\begin{proof}
Setting $s=\ii t$ in \eqref{eq:zeta-cayley} gives
\begin{align*}
    \E\ee^{\ii tY}&=Z_\D(\ii t;q)\\
    &=\sec\!\left(\frac{\ii\pi t}{2}\right)\left\{1+\frac{t^2}{4}\left[\psi\!\left(1+\frac{\ii t}{4}\right)+\psi\!\left(1-\frac{\ii t}{4}\right)-\psi\!\left(\frac12+\frac{\ii t}{4}\right)-\psi\!\left(\frac12-\frac{\ii t}{4}\right)\right]\right\}\\
    &=\sech\!\left(\frac{\pi t}{2}\right)\left\{1+\frac{t^2}{2}\operatorname{Re}\left[\psi\!\left(1+\frac{\ii t}{4}\right)-\psi\!\left(\frac12+\frac{\ii t}{4}\right)\right]\right\},
\end{align*}
where the last equality follows from the identity
$\overline{\psi(z)}=\psi(\overline z)$ away from the poles.
\end{proof}

The following estimates are needed both for Fourier inversion arguments
and for differentiation with respect to the number of factors.

\begin{lemma}\label{lem:phi-estimates}
For every $t\in\R$,
\begin{equation}\label{eq:phi-bounds}
 \sech\!\left(\frac{\pi t}{2}\right)
 \leq\varphi(t)
 \leq(1+t^2\log2)\sech\!\left(\frac{\pi t}{2}\right).
\end{equation}
Moreover, $\varphi$ is even,
$0<\varphi(t)<1$ for $t\neq0$, and
$\varphi\in L^1(\R)\cap L^2(\R)$.
\end{lemma}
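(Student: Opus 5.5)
The plan is to rewrite the bracket in \eqref{eq:phi-definition} as a cosine transform of an explicit convex function. Both inequalities in \eqref{eq:phi-bounds} then follow from elementary properties of that transform. Set
\[
 B(t):=1+\frac{t^2}{2}\operatorname{Re}\!\left(\psi\!\left(1+\frac{\ii t}{4}\right)-\psi\!\left(\frac12+\frac{\ii t}{4}\right)\right),
\]
so that $\varphi(t)=\sech(\pi t/2)\,B(t)$. Since $\sech(\pi t/2)>0$, it suffices to show
\[
 0\le \operatorname{Re}\bigl[\psi(1+\ii t/4)-\psi(\tfrac12+\ii t/4)\bigr]\le 2\log 2 .
\]

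First I would use Gauss's integral $\psi(a)-\psi(b)=\int_0^\infty\frac{\ee^{-bx}-\ee^{-ax}}{1-\ee^{-x}}\dd x$, valid for $\operatorname{Re}a,\operatorname{Re}b>0$. With $a=1+\ii t/4$ and $b=\tfrac12+\ii t/4$, taking real parts gives
\[
 \operatorname{Re}\bigl[\psi(1+\tfrac{\ii t}{4})-\psi(\tfrac12+\tfrac{\ii t}{4})\bigr]=\int_0^\infty g(x)\cos\!\left(\frac{tx}{4}\right)\dd x,
 \qquad g(x)=\frac{\ee^{-x/2}}{1+\ee^{-x/2}}=\frac{1}{\ee^{x/2}+1}.
\]
The upper bound is then immediate: $|\cos|\le1$ and $\int_0^\infty g=2\log2$. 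This yields $B(t)\le1+t^2\log2$.

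For the lower bound I need the cosine transform of $g$ to be nonnegative. The function $g$ is positive, decreasing and convex, with $g,g'\to0$ at infinity. For $u=t/4\neq0$, integrate by parts twice, choosing the primitive $(1-\cos ux)/u^2$ at the second step so that all boundary terms vanish. This gives
\[
 \int_0^\infty g(x)\cos(ux)\dd x=\frac{1}{u^2}\int_0^\infty g''(x)\bigl(1-\cos ux\bigr)\dd x\ge0 .
\]
Hence $B(t)\ge1$, which proves both inequalities in \eqref{eq:phi-bounds}. The main point requiring care is this positivity step. One must check convexity of $g$, which holds because $g''(x)=\tfrac14\ee^{x/2}(\ee^{x/2}-1)/(\ee^{x/2}+1)^3\ge0$, and one must check that the boundary terms vanish.

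The remaining claims are routine. Evenness is clear from the formula, or from the fact that $\rho$ is even. Positivity $\varphi>0$ follows from the lower bound. For $t\neq0$, the bound $\varphi(t)<1$ holds because $\varphi$ is the characteristic function of $Y$ (Corollary~\ref{cor:characteristic}), and $Y$ has a density by Proposition~\ref{prop:density}. If $\varphi(t_0)=1$, then $\E(1-\cos t_0Y)=0$, which would force $Y$ into the lattice $(2\pi/t_0)\mathbb Z$ almost surely, impossible for an absolutely continuous law. Finally, the upper bound gives $0<\varphi(t)\le(1+t^2\log2)\sech(\pi t/2)=O(t^2\ee^{-\pi|t|/2})$. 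So $\varphi\in L^1(\R)$, and since $|\varphi|\le1$, also $\varphi\in L^2(\R)$.
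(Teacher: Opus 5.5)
Your proof is correct, but it reaches the two bounds in \eqref{eq:phi-bounds} by a different route from the paper.

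\textbf{The paper's route.} The paper does not work from the digamma form \eqref{eq:phi-definition}. It sets $s=\ii t$ in \eqref{eq:zeta-C}, so the bracket becomes $1+t^2B(t)$ with $B(t)=\sinh(\pi t/2)^{-1}\int_0^{\pi/2}\sinh(t\theta)\cot\theta\,\dd\theta$. Positivity of $B$ is then just a sign observation. The upper bound $B\le\log2$ comes from convexity of $\sinh$, which gives $\sinh(t\theta)/\sinh(\pi t/2)\le 2\theta/\pi$, together with $\int_0^{\pi/2}\theta\cot\theta\,\dd\theta=\frac{\pi}{2}\log2$.

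\textbf{Your route.} The Gauss-integral representation reverses where the difficulty lies. Your upper bound is immediate from $|\cos|\le1$ and $\int_0^\infty g=2\log2$. Your lower bound instead needs the P\'olya-type fact that a convex, decreasing function vanishing at infinity has a nonnegative cosine transform. You establish this correctly by two integrations by parts. The boundary terms vanish because $1-\cos 0=0$ and $g'\to0$, and $g''\in L^1$ since $g''\ge0$ and $\int_0^\infty g''=-g'(0)$.

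\textbf{What each buys.} The paper's argument is shorter because it reuses a representation already derived in the proof of Proposition~\ref{prop:zeta-cayley}. Yours is self-contained from the closed form of $\varphi$ and needs only a standard digamma integral. It also makes explicit that $\varphi(t)\cosh(\pi t/2)-1=\frac{t^2}{2}\int_0^\infty \frac{\cos(tx/4)}{\ee^{x/2}+1}\,\dd x$.

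Your treatment of evenness, of $0<\varphi(t)<1$ (via the characteristic function and absolute continuity of $Y$), and of $\varphi\in L^1\cap L^2$ is essentially the same as the paper's.
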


\begin{proof}
For $t\ne0$, setting $s=\ii t$ in equation \eqref{eq:zeta-C} gives
\begin{equation}\label{eq:phi-B}
 \varphi(t)=\sech\!\left(\frac{\pi t}{2}\right)
 \bigl(1+t^2B(t)\bigr),
 \quad
 B(t):=\frac{1}{\sinh(\pi t/2)}
 \int_0^{\pi/2}\sinh(t\theta)\cot\theta\,\dd\theta.
\end{equation}
The numerator and denominator in the definition of $B(t)$ have the same sign, so $B(t)>0$. When $t>0$ and $0\leq\theta\leq\pi/2$, convexity of the hyperbolic sine gives \(\sinh(t\theta)/\sinh(\pi t/2) \leq 2\theta/\pi\). Thus
\[
 0<B(t)\leq\frac{2}{\pi}
 \int_0^{\pi/2}\theta\cot\theta\,\dd\theta=\log2.
\]
The same conclusion holds for negative $t$ because $B$ is even. Moreover, $B$ extends continuously to $t=0$ upon setting
$B(0)=\log2$. The two
bounds in \eqref{eq:phi-bounds} now follow from \eqref{eq:phi-B}; the
upper bound implies the asserted $L^1$ and $L^2$ integrability.

The identity $\psi(\overline z)=\overline{\psi(z)}$ also shows directly
that $\varphi$ is even.  Moreover, since $\varphi(t)>0$, we have
\[
 0<\sech\!\left(\frac{\pi t}{2}\right)
 \leq \varphi(t)
 =\left|\E\ee^{\ii tY}\right|
 \leq1.
\]
If $\varphi(t)=1$, then equality in the triangle inequality would force
$\ee^{\ii tY}$ to be almost surely constant.  This is impossible for
$t\neq0$, because Proposition~\ref{prop:density} gives a strictly
positive density on all of $\R$.  Therefore
$0<\varphi(t)<1$ for $t\neq0$.
\end{proof}

The constants in \eqref{eq:sigma-kappa} can now be read off from the
transform.  The corresponding moments also follow from the higher areal
measure formulas in \cite{LalinRoy2024a}; we include the short calculation
to keep the normalization explicit.

\begin{lemma}\label{lem:cumulants}
The variance of $Y$ is $\sigma^2$, and its fourth cumulant
$\E Y^4-3(\E Y^2)^2$ is $\kappa_4$, with the constants defined in
\eqref{eq:sigma-kappa}.
\end{lemma}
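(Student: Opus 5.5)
The plan is to read the cumulants off the moment generating function $Z_{\D}(s;q)=\E\ee^{sY}$. By Proposition~\ref{prop:zeta-cayley} this function is holomorphic on the strip $|\operatorname{Re}s|<2$, so it is analytic near $s=0$. Hence $Y$ has moments of all orders, and the cumulant generating function $K(s):=\log\E\ee^{sY}$ is analytic near $0$ with $K(s)=\sum_{j\ge1}\kappa_j s^j/j!$. Since $\rho$ is even (Proposition~\ref{prop:density}), all odd cumulants vanish. In particular $\E Y=0$, so $\kappa_2=\E Y^2$ is the variance and $\kappa_4=\E Y^4-3(\E Y^2)^2$ is the fourth cumulant. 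It therefore suffices to compute the coefficients of $s^2$ and $s^4$ in $K(s)$.

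I will use the factorization \eqref{eq:zeta-C}, namely $Z_{\D}(s;q)=\sec(\pi s/2)\,(1-s^2\mathcal C(s))$, which gives
\[
 K(s)=\log\sec\!\left(\frac{\pi s}{2}\right)+\log\bigl(1-s^2\mathcal C(s)\bigr).
\]

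\emph{First term.} From $\log\sec x=x^2/2+x^4/12+O(x^6)$ we get
\[
 \log\sec\!\left(\frac{\pi s}{2}\right)=\frac{\pi^2}{8}s^2+\frac{\pi^4}{192}s^4+O(s^6).
\]

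\emph{Expansion of $\mathcal C$.} Start from the second series in \eqref{eq:C-series} and expand $1/(1-s^2/(4n^2))$ geometrically. This is justified for $|s|<2$ by absolute convergence of the double series, after grouping consecutive terms or using the first, absolutely convergent series in \eqref{eq:C-series}. The result is
\[
 \mathcal C(s)=\sum_{k\ge0}\frac{s^{2k}}{4^k}\sum_{n\ge1}\frac{(-1)^{n+1}}{n^{2k+1}}
 =\log2+\frac{3\zeta(3)}{16}s^2+O(s^4),
\]
where I used $\eta(1)=\log2$ and $\eta(3)=\tfrac34\zeta(3)$.

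\emph{Second term.} Write $a=\log2$ and $b=3\zeta(3)/16$. Then $1-s^2\mathcal C(s)=1-as^2-bs^4+O(s^6)$, and therefore
\[
 \log\bigl(1-s^2\mathcal C(s)\bigr)=-as^2-\Bigl(b+\frac{a^2}{2}\Bigr)s^4+O(s^6).
\]

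\emph{Reading off the cumulants.} Adding the two expansions gives
\[
 K(s)=\Bigl(\frac{\pi^2}{8}-\log2\Bigr)s^2+\Bigl(\frac{\pi^4}{192}-\frac{3\zeta(3)}{16}-\frac{\log^22}{2}\Bigr)s^4+O(s^6).
\]
Multiplying the $s^2$ coefficient by $2!$ gives
\[
 \kappa_2=\frac{\pi^2}{4}-2\log2=\sigma^2.
\]
Multiplying the $s^4$ coefficient by $4!=24$ gives
\[
 \kappa_4=\frac{\pi^4}{8}-\frac92\zeta(3)-12\log^22,
\]
which matches \eqref{eq:sigma-kappa}.

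The main point needing care is the series manipulation for $\mathcal C(s)$. There are two routes, and I would use the first.
\begin{itemize}
\item Use the first series in \eqref{eq:C-series}, $\log2+s^2\sum_n(-1)^n/(n(s^2-4n^2))$, which converges absolutely. Expanding it only to order $s^2$ inside gives the $s^4$ coefficient $-\tfrac14\sum_n(-1)^n/n^3=\tfrac14\eta(3)$ directly, with no rearrangement issue.
\item Alternatively, differentiate the digamma form of $\mathcal C(s)$ using $\psi''(1)-\psi''(1/2)=-2\zeta(3)+14\zeta(3)=12\zeta(3)$. This gives the same coefficient $\tfrac14\cdot\tfrac{12\zeta(3)}{16}=\tfrac{3\zeta(3)}{16}$ and serves as a cross-check.
\end{itemize}
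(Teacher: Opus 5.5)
Your proof is correct and follows essentially the paper's route. Both arguments rest on the factorization \eqref{eq:zeta-C} and on expanding the first series in \eqref{eq:C-series} to get $\mathcal C(s)=\log2+\tfrac{3\zeta(3)}{16}s^2+O(s^4)$; the paper's $B(t)$ is just $\mathcal C(\ii t)$. The only difference is bookkeeping. You take logarithms and read $\kappa_2,\kappa_4$ directly from the cumulant generating function. The paper instead multiplies the two series for $\varphi(t)$, extracts $\E Y^2$ and $\E Y^4$, and then forms $\E Y^4-3(\E Y^2)^2$.

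One small imprecision: the geometric expansion of the second series is not justified by absolute convergence of the double series, since its $k=0$ column is $\sum_n(-1)^{n+1}/n$. However, the first-series route you actually adopt is rigorous, so nothing is lost.
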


\begin{proof}
Substituting $s=\ii t$ into the first series in \eqref{eq:C-series} gives
\begin{align*}
    B(t)&=\log2+t^2\sum_{n=1}^\infty\frac{(-1)^n}{n(t^2+4n^2)}\\
    &=\log2+\frac{t^2}{4}\sum_{n=1}^\infty\frac{(-1)^n}{n^3}-\frac{t^4}{4}\sum_{n=1}^\infty\frac{(-1)^n}{n^3(t^2+4n^2)}\\
    &=\log2-\frac{3\zeta(3)}{16}t^2+O(t^4).
\end{align*}
Combining this expansion with \eqref{eq:phi-B} and the Taylor series of
the hyperbolic secant yields
\begin{align*}
    \varphi(t)&=\left(1-\frac{\pi^2}{8}t^2+\frac{5\pi^4}{384}t^4+O(t^6)\right)\left(1+t^2\log2-\frac{3\zeta(3)}{16}t^4+O(t^6)\right)\\
    &=1-\frac12\left(\frac{\pi^2}{4}-2\log2\right)t^2
 +\frac{1}{24}\left(
 \frac{5\pi^4}{16}-3\pi^2\log2-\frac92\zeta(3)
 \right)t^4+O(t^6).
\end{align*}
Since $Z_{\D}(s;q)$ is holomorphic for $|\operatorname{Re}s|<2$,
the characteristic function is real analytic at the origin; in particular,
the remainder above may be differentiated to the required order.  Comparing
with the moment expansion of a characteristic function gives
\[
 \E Y=0,\qquad
 \E Y^2=\frac{\pi^2}{4}-2\log2=\sigma^2,\qquad
 \E Y^3=0,
\]
and
\[
 \E Y^4=\frac{5\pi^4}{16}-3\pi^2\log2-\frac92\zeta(3).
\]
Consequently,
\begin{align*}
 \E Y^4-3(\E Y^2)^2
 &=\frac{5\pi^4}{16}-3\pi^2\log2-\frac92\zeta(3)
   -3\left(\frac{\pi^2}{4}-2\log2\right)^2\\
 &=\frac{\pi^4}{8}-12(\log2)^2-\frac92\zeta(3)
 =\kappa_4.
\qedhere\end{align*}
\end{proof}

\section{Integral representations for the areal Mahler measure}\label{sec:representations}

We first remove two algebraically inessential features of the family in
\eqref{eq:families}.  Define
\[
 W_m(x_1,\ldots,x_m):=\prod_{j=1}^m q(x_j).
\]

\begin{lemma}\label{lem:equivalent-forms}
For every integer $m\geq1$,
\begin{equation}\label{eq:equivalent-forms}
 \mD(P_m)=\mD(R_m)=\mD(u+W_m).
\end{equation}
\end{lemma}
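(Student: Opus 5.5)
We prove the two equalities in \eqref{eq:equivalent-forms} separately, using only additivity of $\mD$ on quotients from \eqref{eq:areal-measure} and a simple substitution.

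For the first equality, observe that
\[
 P_m=\prod_{j=1}^m(1+x_j)\cdot R_m .
\]
By \eqref{eq:areal-measure} this gives
\[
 \mD(R_m)=\mD(P_m)-\mD\Bigl(\prod_{j=1}^m(1+x_j)\Bigr).
\]
The second term splits as a sum of one-variable measures $\mD(1+x_j)$. Each of these is zero by the mean value property: $\log|1+z|$ is harmonic on $\D$ (its only singularity $z=-1$ lies on the boundary) and vanishes at $z=0$. To average over each circle $|z|=r<1$ and then integrate in $r$, we need integrability up to the boundary, which holds since $\log|1+z|$ is locally integrable. Hence $\mD(P_m)=\mD(R_m)$.

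For the second equality, note that
\[
 R_m=1+uW_m ,\qquad u+W_m=u\,(1+u^{-1}W_m).
\]
Integrating out $u$ first directly would be awkward, so we use a substitution instead. Apply $x_j\mapsto -x_j$ in one coordinate, say $j=1$. This preserves area measure on $\D$ and sends $q(x_1)$ to $1/q(x_1)$, hence $W_m\mapsto W_m'/q(x_1)^2$, which is not the clean inversion we want. A cleaner route is to apply $x_j\mapsto -x_j$ in every coordinate simultaneously. This sends $W_m$ to $1/W_m$ and preserves the product of area measures, so
\[
 \mD(R_m)=\mD(1+u/W_m)=\mD\Bigl(\frac{W_m+u}{W_m}\Bigr).
\]
Additivity then gives $\mD(u+W_m)-\mD(W_m)$ after clearing denominators. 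More concretely, with $N=\prod_j(1-x_j)$ and $D=\prod_j(1+x_j)$,
\[
 \frac{W_m+u}{W_m}=\frac{N+uD}{N},
\]
so this equals $\mD(N+uD)-\mD(N)$. Here $\mD(N)=0$ by the same mean value argument. Similarly, $u+W_m=(N+uD)/D$, so $\mD(u+W_m)=\mD(N+uD)-\mD(D)=\mD(N+uD)$. Both sides therefore reduce to $\mD(N+uD)$, which proves the second equality.

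The only genuinely analytic point is the vanishing $\mD(1\pm x)=0$. This needs local integrability of $\log|1\pm x|$ near the boundary point and the circle-mean computation $\frac{1}{2\pi}\int_0^{2\pi}\log|1+r\ee^{\ii\theta}|\,\dd\theta=0$ for $r<1$. Both are standard consequences of Jensen's formula. The rest is bookkeeping of which polynomial quotient representation is used, and this is licensed by the well-definedness remark after \eqref{eq:areal-measure}.
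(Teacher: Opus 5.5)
Your proof is correct and follows the paper's argument. You factor $P_m=R_m\prod_j(1+x_j)$, use $\mD(1\pm x)=0$, and apply the simultaneous substitution $x_j\mapsto -x_j$, which sends $W_m$ to $W_m^{-1}$ and preserves the product measure. The differences are cosmetic: you check $\mD(1\pm x)=0$ directly by the mean value property instead of citing Pritsker's formula, and you reduce both $\mD(1+uW_m^{-1})$ and $\mD(u+W_m)$ to $\mD(N+uD)$ rather than writing $\mD(1+uW_m^{-1})=\mD(u+W_m)-\mD(W_m)$ with $\mD(W_m)=0$.
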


\begin{proof}
The factorization $P_m=R_m\prod_{j=1}^m(1+x_j)$ and additivity of the measure give
\[
 \mD(P_m)=\mD(R_m)+\sum_{j=1}^m\mD(1+x_j).
\]
Pritsker's one-variable formula \cite[Theorem~1.1]{Pritsker2008} gives
$\mD(1+x)=0$, proving the first equality in
\eqref{eq:equivalent-forms}.

We have $R_m=1+uW_m$. The simultaneous change of variables
$x_j\mapsto-x_j$ preserves normalized area measure and sends $W_m$ to
$W_m^{-1}$. Hence
\[
 \mD(R_m)=\mD(1+uW_m^{-1})
 =\mD(u+W_m)-\mD(W_m).
\]
Again by Pritsker's formula,
\[
 \mD(W_m)=\sum_{j=1}^m
 \bigl(\mD(1-x_j)-\mD(1+x_j)\bigr)=0.
\]
This proves the second equality in \eqref{eq:equivalent-forms}.
\end{proof}

\subsection{A convolution formula}

For an integrable density $f$ on $\R$, let $f^{*m}$ denote its $m$-fold convolution.

\begin{theorem}\label{thm:convolution}
For every integer $m\geq1$,
\begin{equation}\label{eq:convolution}
 \mD(P_m)=\int_0^{+\infty}
 \left(y+\frac{\ee^{-2y}-1}{2}\right)\rho^{*m}(y)\,\dd y.
\end{equation}
\end{theorem}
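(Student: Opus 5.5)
By Lemma~\ref{lem:equivalent-forms} it suffices to compute $\mD(u+W_m)$. The plan is to integrate out $u$ first and then take the expectation over $x_1,\ldots,x_m$. Independence and identical distribution of the coordinates then reduce everything to the law of the sum of $m$ independent copies of $Y$.

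The first step is the one-variable computation. For fixed $a\in\C\setminus\{0\}$, Pritsker's formula (or a direct computation via the mean value property of $\log|u+a|$ over circles) gives
\[
 \frac1\pi\int_{\D}\log|u+a|\,\dd A(u)=g(|a|),\qquad
 g(r)=\begin{cases}\dfrac{r^2-1}{2},&r\le1,\\[2mm] \log r,&r\ge1.\end{cases}
\]
To check this, note that the circular mean of $\log|\rho e^{\ii\theta}+a|$ is $\log\max(\rho,|a|)$. Integrating against $2\rho\,\dd\rho$ over $0<\rho<1$ gives $\log r$ when $r\ge1$. When $r<1$ it gives $r^2\log r+\int_r^1 2\rho\log\rho\,\dd\rho=(r^2-1)/2$.

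By Fubini, justified since $\log|u+W_m|$ is integrable on $\D^{m+1}$ as noted after \eqref{eq:areal-measure}, we get $\mD(u+W_m)=\E\,g(|W_m(X_1,\ldots,X_m)|)$ with $X_j$ i.i.d.\ uniform on $\D$. Since $\log|W_m|=Y_1+\cdots+Y_m=:S_m$ with $Y_j=\log|q(X_j)|$ i.i.d.\ with density $\rho$ (Proposition~\ref{prop:density}), $S_m$ has density $\rho^{*m}$, which is even. Hence
\[
 \mD(P_m)=\int_0^{\infty}y\,\rho^{*m}(y)\,\dd y+\int_{-\infty}^0\frac{\ee^{2y}-1}{2}\,\rho^{*m}(y)\,\dd y .
\]
Substituting $y\mapsto-y$ in the second integral and using evenness of $\rho^{*m}$ yields \eqref{eq:convolution}.

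The main point needing care is the Fubini step, since $W_m$ has poles where some $x_j=-1$ and zeros where $x_j=1$ on the boundary. I would handle it by working with the polynomial $P_m$ directly. We have $\log|P_m|=\log|u+W_m|+\log|\prod(1+x_j)|$ off a null set, and both $\log|P_m|$ and $\log|\prod(1+x_j)|$ are integrable on the closed polydisk. So $\log|u+W_m|$ is integrable and Tonelli/Fubini applies. Integrability of $g(|W_m|)$ is immediate, because $|g(e^{s})|\le |s|+\tfrac12$ and $S_m$ has all moments.
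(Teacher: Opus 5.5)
Your proof is correct and follows the paper's route: Fubini in $u$, Pritsker's one-variable formula for the inner integral, then the law $\rho^{*m}$ of $S_m$. The only difference is that you integrate $u+W_m$, whose kernel $g(\ee^{y})$ is nonzero for $y<0$, so you need the evenness of $\rho^{*m}$ to fold onto $(0,\infty)$. The paper instead integrates $R_m=1+uW_m$, whose kernel $h(y)=y+g(\ee^{-y})$ vanishes for $y\le0$, so the half-line integral appears directly.

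There is one slip in your last paragraph. The sum $\log|u+W_m|+\log|\prod_j(1+x_j)|$ equals $\log|P_m(-x_1,\ldots,-x_m,u)|$, not $\log|P_m|$; it is $1+uW_m$ that pairs with $P_m$. Since $P_m(-x,u)$ is again a nonzero polynomial, your integrability conclusion is unaffected.
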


\begin{proof}
Let $X_1,\ldots,X_m, U$ be independent random variables uniformly distributed on $\D$. Put
\[
 Y_j:=\log|q(X_j)|,
 \quad
 S_m:=Y_1+\cdots+Y_m.
\]
Then the random variable $S_m$ has density $\rho^{*m}$. Define the random variables
\[
 \mathcal W_m:=W_m(X_1,\ldots,X_m),
 \qquad
 \mathcal R_m:=R_m(X_1,\ldots,X_m,U)
              =1+U\mathcal W_m.
\]
Since \(|\mathcal W_m| = \prod_{j=1}^m|q(X_j)| = \ee^{S_m}\), conditioning on $X_1,\ldots,X_m$ gives
\[
\begin{aligned}
 \E\left(
   \log|\mathcal R_m|
   \,\big|\,
   X_1,\ldots,X_m
 \right)
 &=
 \frac{1}{\pi}
 \int_{\D}
 \log|1+u\mathcal W_m|
 \,\dd A(u)\\
 &=
 \frac{1}{\pi}
 \int_{\D}
 \log\left|1+u|\mathcal W_m|\right|
 \,\dd A(u)\\
 &=
 \frac{1}{\pi}
 \int_{\D}
 \log|1+u\ee^{S_m}|
 \,\dd A(u),
\end{aligned}
\]
where the second equality follows from the rotational invariance of
normalized area measure on $\D$.

Pritsker's one-variable root formula yields
\begin{equation}\label{eq:h-kernel}
 h(y):=
 \frac{1}{\pi}
 \int_{\D}\log|1+\ee^y u|\,\dd A(u)
 =
 \begin{cases}
 0,&y\leq0,\\[1mm]
 \displaystyle y+\frac{\ee^{-2y}-1}{2},&y>0.
 \end{cases}
\end{equation}
Indeed, for $y\leq0$, the zero $-\ee^{-y}$ lies outside or on the
boundary of the unit disk, whereas for $y>0$ it lies inside.

The uses of conditional expectation and Fubini's theorem are justified as
follows.  The logarithm of the modulus of a nonzero polynomial is locally
integrable to every finite power on a neighborhood of the closed polydisk;
locally at a zero this reduces to
$\int_0^\varepsilon |\log t|^p\,\dd t<\infty$.  The same statement holds for
a nonzero rational function after separating numerator and denominator.
Thus all logarithmic random variables occurring here lie in $L^p$ for every
finite $p$, and H\"older's inequality gives the required absolute
integrability.  Consequently,
\[
\begin{aligned}
 \mD(R_m)
 &=
 \E\log|\mathcal R_m|\\
 &=
 \E\left[
   \E\left(
     \log|\mathcal R_m|
     \,\big|\,
     X_1,\ldots,X_m
   \right)
 \right]\\
 &=
 \E h(S_m).
\end{aligned}
\]
Using \eqref{eq:h-kernel}, the density of $S_m$, and
Lemma~\ref{lem:equivalent-forms} gives \eqref{eq:convolution}.
Moreover, since
$0\leq h(y)\leq\max\{y,0\}$ and $\rho(y)\le\sech^2y$, $$\E|h(S_m)|\le\E|S_m|\le m\E|Y_1|\le2m\int_0^{+\infty}y\sech^2y\dd y<+\infty.$$ Thus the integral is absolutely convergent.
\end{proof}

\subsection{A Fourier integral representation}

We now prove Theorem~\ref{thm:fourier-main}.

\begin{proof}
Since $\rho(y)$ is even, $\rho^{*m}(y)$ is also even. Hence the proof of Theorem~\ref{thm:convolution} gives
$$\mD(P_m)=\E h(S_m)=\int_0^{+\infty} h(y)\rho^{*m}(y)\dd y=\int_\R\frac12h(|y|)\rho^{*m}(y)\dd y.$$
Define $$g(y)=\frac12h(|y|)=\frac{|y|}{2}+\frac{\ee^{-2|y|}-1}4.$$ Then $\mD(P_m)=\E g(S_m)$.
The standard integral identities
\[
 \int_0^{+\infty}\frac{1-\cos(ty)}{t^2}\,\dd t
 =\frac{\pi|y|}{2},
 \quad
 \int_0^{+\infty}\frac{\cos(ty)}{t^2+4}\,\dd t
 =\frac{\pi}{4}\ee^{-2|y|},\quad\int_0^{+\infty}\frac1{t^2+4}\dd t=\frac\pi4
\]
imply
\begin{equation}\label{eq:g-fourier}
 g(y)=\frac{4}{\pi}\int_0^{+\infty}
 \frac{1-\cos(ty)}{t^2(t^2+4)}\,\dd t.
\end{equation}
The integrand in \eqref{eq:g-fourier} is nonnegative. Tonelli's theorem gives
\begin{align*}
    \mD(P_m)&=\frac{4}{\pi}\int_\R\rho^{*m}(y)\dd y\int_0^{+\infty}\frac{1-\cos(ty)}{t^2(t^2+4)}\dd t\\
    &=\frac4\pi\int_0^{+\infty}\frac{\dd t}{t^2(t^2+4)}\int_\R\left(1-\cos(ty)\right)\rho^{*m}(y)\dd y\\
    &=\frac4\pi\int_0^{+\infty}\frac{1-\E\cos(tS_m)}{t^2(t^2+4)}\dd t.
\end{align*}
By the independence of $Y_1,\ldots,Y_m$ and Corollary~\ref{cor:characteristic}, $$\E\cos(tS_m)=\operatorname{Re}\E\ee^{\ii tS_m}=\operatorname{Re}\left(\E\ee^{\ii tY}\right)^m=\varphi(t)^m.$$
Therefore,
\[
 \mD(P_m)=\frac{4}{\pi}\int_0^{+\infty}\frac{1-\varphi(t)^m}{t^2(t^2+4)}\,\dd t,
\]
which is \eqref{eq:fourier-main}.

Only convergence remains to be verified.  By
Lemma~\ref{lem:phi-estimates}, $0\leq\varphi(t)\leq1$. We have
\[
 0\leq1-\varphi(t)
 =\E(1-\cos(tY))
 \leq\frac{t^2}{2}\E Y^2.
\]
Since $\E Y^2<+\infty$, 
$1-\varphi(t)^m\leq m(1-\varphi(t))=O(t^2)$ near the origin.  At infinity, the integrand is $O(t^{-4})$. Thus the integral in \eqref{eq:fourier-main} is absolutely convergent.
\end{proof}

For completeness, let $\chi_{-4}$ be the primitive Dirichlet character
modulo $4$, defined by $\chi_{-4}(n)=0$ for even $n$ and
$\chi_{-4}(n)=(-1)^{(n-1)/2}$ for odd $n$, and let
$L(\chi_{-4},s)=\sum_{n\geq1}\chi_{-4}(n)n^{-s}$.
The known evaluation of the first member of the family is then recovered
without further integration.

\begin{corollary}\label{cor:one-factor}
We have
\[
 \mD(P_1)
 =\frac{6}{\pi}L(\chi_{-4},2)-\log2-\frac12-\frac1\pi.
\]
\end{corollary}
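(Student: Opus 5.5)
We start from the convolution formula of Theorem~\ref{thm:convolution} with $m=1$, so that $\rho^{*1}=\rho$, and insert the first (integral) expression for $\rho$ from Proposition~\ref{prop:density}:
\[
 \mD(P_1)=\frac{2}{\pi}\int_0^{\pi/2}\dd\theta\int_0^{+\infty}
 \frac{y+\tfrac12(\ee^{-2y}-1)}{(\cosh y+\cos\theta)^2}\,\dd y ,
\]
using evenness in $\theta$. The integrand is nonnegative, because $h\ge0$ in \eqref{eq:h-kernel}. Tonelli's theorem therefore justifies performing the $y$-integration first for fixed $\theta\in(0,\pi/2]$.

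For the inner integral we substitute $u=\ee^{-y}\in(0,1)$. Then $(\cosh y+\cos\theta)^{-2}\dd y=4u\,\dd u/(1+2u\cos\theta+u^2)^2$, and $h(y)=-\log u+\tfrac12(u^2-1)$. The rational part $\tfrac12(u^2-1)\cdot 4u/(1+2u\cos\theta+u^2)^2$ integrates to elementary functions of $\theta$, namely rational functions of $\cos\theta$, $\sin\theta$ and terms like $\theta/\sin\theta$ or $\theta/\sin^3\theta$ arising from $\arctan$ evaluations at $u=0,1$. For the logarithmic part $-4u\log u/(1+2u\cos\theta+u^2)^2$ we integrate by parts in $u$, using the primitive of $4u/(1+2u\cos\theta+u^2)^2$ that vanishes at $u=0$. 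The boundary term at $u=1$ is killed by $\log 1=0$, and the remaining integral $\int_0^1 (\text{primitive})\,\dd u/u$ is again rational-plus-$\arctan$. Equivalently, one can use the partial fractions $1/(u+\ee^{\pm\ii\theta})$ and the elementary dilogarithm-free integrals $\int_0^1 \log u\,(u+\ee^{\ii\theta})^{-k}\dd u$ for $k=1,2$. These produce $\log(1+\ee^{\ii\theta})$ and $\log\ee^{\ii\theta}=\ii\theta$, i.e.\ $\log(2\cos(\theta/2))$ and $\theta$, divided by powers of $\sin\theta$. The outcome is an explicit function $J(\theta)$ built from $1$, $\theta$, and $\log\cos(\theta/2)$ over powers of $\sin\theta$ and $1+\cos\theta$.

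The last step is $\frac{2}{\pi}\int_0^{\pi/2}J(\theta)\,\dd\theta$. The individual pieces have non-integrable singularities at $\theta=0$ (powers of $1/\sin\theta$), and these must cancel, just as the singular terms cancelled in the proof of Proposition~\ref{prop:zeta-cayley}. So I will integrate from $\varepsilon$ and track the divergent terms, reducing $\theta/\sin^k\theta$ by integration by parts to $\int_0^{\pi/2}\theta/\sin\theta\,\dd\theta=2\mathsf G$ and elementary constants. The pieces with $\log\cos(\theta/2)$ reduce via $\theta=2\alpha$ to $\int_0^{\pi/4}\log\cos\alpha\,\dd\alpha=\tfrac12\mathsf G-\tfrac\pi4\log2$ together with rational terms. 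Collecting terms should give $\tfrac{6}{\pi}\mathsf G-\log2-\tfrac12-\tfrac1\pi$, and we note $L(\chi_{-4},2)=\mathsf G$.

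I expect the main obstacle to be this final bookkeeping: keeping the endpoint cancellation at $\theta=0$ exact and not losing the rational constants $-\tfrac12$ and $-\tfrac1\pi$. As a check on the constant, I would compare against the evaluation of Lal\'in and Roy \cite[Theorem~1.4]{LalinRoy2024a}. I would also verify numerically against the Fourier formula \eqref{eq:fourier-main} with $m=1$.
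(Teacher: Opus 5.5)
Your route differs from the paper's. The paper proves this in one line: Lemma~\ref{lem:equivalent-forms} reduces $\mD(P_1)$ to $\mD(u+q(x))$, and the value is quoted from \cite[Theorem~1.4]{LalinRoy2024a}. A self-contained derivation from Theorem~\ref{thm:convolution} and Proposition~\ref{prop:density} is a reasonable aim, and your setup is correct: the double integral, Tonelli, and the substitution $u=\ee^{-y}$. However, your central claim about the inner integral is false, because it is not elementary.

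In your partial-fraction version, the $k=1$ integral is not dilogarithm-free: $\int_0^1\log u\,(u+\ee^{\ii\theta})^{-1}\dd u=\operatorname{Li}_2(-\ee^{-\ii\theta})$. Its coefficient in $4u/(1+2u\cos\theta+u^2)^2$ is $\cos\theta/(\ii\sin^3\theta)\neq0$. In your integration-by-parts version, the primitive vanishing at $0$ is
\[
 \frac{2u(u+\cos\theta)}{\sin^2\theta\,(1+2u\cos\theta+u^2)}
 -\frac{2\cos\theta}{\sin^3\theta}\arctan\frac{u\sin\theta}{1+u\cos\theta}.
\]
Dividing the arctangent by $u$ and integrating over $(0,1)$ gives the Clausen-type function $\Phi(\theta):=\sum_{n\ge1}(-1)^{n+1}\sin(n\theta)/n^2$. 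The rational part is also not log-free, since it produces $\log(2+2\cos\theta)$. Put $\ell(\theta):=\log(2\cos(\theta/2))$, so that $\Phi'=\ell$ and $\Phi(0)=0$. Then the exact inner integral is
\[
 \int_0^{+\infty}\frac{h(y)\,\dd y}{(\cosh y+\cos\theta)^2}
 =2\ell(\theta)-\frac{1}{1+\cos\theta}-\theta\cot\theta
 +\frac{2\ell(\theta)}{\sin^2\theta}-\frac{2\cos\theta}{\sin^3\theta}\,\Phi(\theta).
\]
Your planned final bookkeeping only anticipates $\theta/\sin^k\theta$ and $\log\cos(\theta/2)$ terms, so it does not cover the last term as stated.

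The repair is short. Because $\Phi'=\ell$, the last two terms equal $\frac{\dd}{\dd\theta}\bigl(\Phi/\sin^2\theta\bigr)+\ell/\sin^2\theta$. Integrate $\ell/\sin^2\theta$ by parts against $-\cot\theta$, using $\cot\theta\,\ell'(\theta)=-\cos\theta/(2(1+\cos\theta))$; only elementary integrals remain. The boundary terms $\cot\varepsilon\,\ell(\varepsilon)-\Phi(\varepsilon)/\sin^2\varepsilon$ tend to $0$ as $\varepsilon\to0^+$. Now use $\Phi(\pi/2)=\mathsf G$, $\int_0^{\pi/2}2\ell(\theta)\,\dd\theta=2\mathsf G$, $\int_0^{\pi/2}\theta\cot\theta\,\dd\theta=\frac\pi2\log2$ and $\int_0^{\pi/2}\frac{\dd\theta}{1+\cos\theta}=1$. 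The $\theta$-integral then equals $3\mathsf G-\frac\pi2\log2-\frac\pi4-\frac12$, and multiplying by $2/\pi$ gives exactly the stated value.

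With this correction, your argument is a complete, independent derivation of the Lal\'in--Roy value from the paper's own density. It also serves as a consistency check on the normalization in Theorem~\ref{thm:convolution}. The paper's proof, by contrast, rests entirely on the external citation.
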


\begin{proof}
Lemma~\ref{lem:equivalent-forms} gives
$\mD(P_1)=\mD(u+q(x))$.  The stated value is exactly
\cite[Theorem~1.4]{LalinRoy2024a}.
\end{proof}

\section{Cyclotomic polylogarithms and an explicit evaluation}\label{sec:cyclotomic}

We now place the values in a cyclotomic multiple polylogarithm space and
then carry out the reduction explicitly for two factors.  Let
\[
 \mu_4:=\{1,-1,\ii,-\ii\},
 \qquad
 \Sigma_4:=\{0\}\cup\mu_4.
\]
Let $\gamma:[0,1]\to\mathbb C$ be a piecewise $C^1$ path from $0$ to $z$
whose interior lies in $\mathbb C\setminus\Sigma_4$.  For $0\leq s\leq1$,
let $\gamma_s$ denote the restriction of $\gamma$ to $[0,s]$.  We use the
path-defined Goncharov polylogarithm (GPL)
\[
 G_\gamma(a_1,\ldots,a_k;z)
 :=\int_0^1
 \frac{\gamma'(s)}{\gamma(s)-a_1}
 G_{\gamma_s}(a_2,\ldots,a_k;\gamma(s))\,\dd s,
 \qquad G_\gamma(;z):=1.
\]
The weight of $G(a_1,\ldots,a_k;z)$ is the word length $k$.
When the path is clear we omit the subscript.  For an all-zero word we use
\[
 G(\underbrace{0,\ldots,0}_{r};z)=\frac{\log^r z}{r!},
\]
with the logarithm determined by the path.  Throughout this paper, values at
$z=1$ are taken along the real segment $[0,1]$, with tangential base points
$\overrightarrow{01}$ and $\overrightarrow{10}$; endpoint-divergent words are interpreted by shuffle regularization
in the standard tangential-base-point sense; see, for example,
\cite{Au2026}.  Values at $0<z<1$ use the same
real segment.  In particular, every convergent value used below has an
unambiguous ordinary improper-integral interpretation, and
\[
 \overline{G(a_1,\ldots,a_k;1)}
 =G(\overline{a_1},\ldots,\overline{a_k};1).
\]
Writing $\omega_a=\dd t/(t-a)$, we abbreviate the corresponding iterated
integral by
\[
 G(a_1,\ldots,a_k;z)=\int_0^z\omega_{a_1}\cdots\omega_{a_k}.
\]
We shall also refer to these iterated integrals as hyperlogarithms. We denote by $\mathcal Z_4$ the $\mathbb Q(\ii)$-algebra generated by the
shuffle-regularized values
\[
 G(a_1,\ldots,a_r;1),\qquad a_j\in\Sigma_4.
\]
Equivalently, $\mathcal Z_4$ is the algebra of level-$4$ cyclotomic
multiple polylogarithm values; see, for example, \cite{Au2026}.  Notice that
$\pi\in\mathcal Z_4$, since
$G(\ii;1)-G(-\ii;1)=\ii\pi/2$.

\subsection{A cyclotomic closure theorem}

For $0<r<1$, set
\[
 \widehat\rho(r):=\rho(-\log r),
 \qquad
 F_m(r):=\rho^{*m}(-\log r).
\]
The explicit density in Proposition~\ref{prop:density} gives
\begin{equation}\label{eq:rhohat-cyclotomic}
 \begin{aligned}
 \widehat\rho(r)
 =\frac{8r^2(1+r^2)}{\pi(1-r^2)^3}
 \left(\frac{\pi}{2}-2\arctan r\right)-\frac{16r^3}{\pi(1+r^2)(1-r^2)^2}.
 \end{aligned}
\end{equation}
Although the right-hand side has apparent singularities at $r=1$, it
extends continuously there, with value $4/(3\pi)$.

\begin{lemma}\label{lem:mellin-convolution-recursion}
For every integer $m\geq1$ and every $0<r<1$,
\begin{equation}\label{eq:F-recursion}
 \begin{aligned}
 F_{m+1}(r)
 ={}&\int_0^1F_m(ru)\widehat\rho(u)\frac{\dd u}{u}
 +\int_r^1F_m(r/u)\widehat\rho(u)\frac{\dd u}{u}\\
 &+\int_0^rF_m(u/r)\widehat\rho(u)\frac{\dd u}{u}.
 \end{aligned}
\end{equation}
Moreover,
\begin{equation}\label{eq:M-cyclotomic-integral}
 \mD(P_m)
 =\int_0^1
 \left(-\log r+\frac{r^2-1}{2}\right)F_m(r)\frac{\dd r}{r}.
\end{equation}
\end{lemma}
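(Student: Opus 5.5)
The plan is to write the convolution $\rho^{*(m+1)}=\rho^{*m}*\rho$ in logarithmic coordinates and split the integration range according to signs, using evenness of both $\rho$ and $\rho^{*m}$. Fix $0<r<1$ and put $x=-\log r>0$. Then
\[
 F_{m+1}(r)=\rho^{*(m+1)}(x)=\int_{\R}\rho^{*m}(x-v)\rho(v)\,\dd v .
\]
I would split the $v$-integral into $v<0$, $0<v<x$, and $v>x$, and in each piece substitute $u=\ee^{-|v|}\in(0,1)$, so that $\dd v$ becomes $\pm\dd u/u$ and $\rho(v)=\widehat\rho(u)$ by evenness.

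The three pieces go as follows.
\begin{itemize}
\item For $v<0$, write $v=\log u$. Then $x-v=-\log(ru)>0$, so $\rho^{*m}(x-v)=F_m(ru)$, and $u$ ranges over $(0,1)$. This gives the first integral.
\item For $0<v<x$, write $v=-\log u$, so $u\in(r,1)$. Then $x-v=-\log(r/u)$ with $r/u\in(r,1)\subset(0,1)$, so the integrand is $F_m(r/u)$. This gives the second integral.
\item For $v>x$, write $v=-\log u$, so $u\in(0,r)$. Now $x-v=\log(r/u)>0$, and by evenness $\rho^{*m}(x-v)=\rho^{*m}(-\log(u/r))=F_m(u/r)$ with $u/r\in(0,1)$. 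This gives the third integral.
\end{itemize}
Evenness of $\rho^{*m}$ follows from evenness of $\rho$ (Proposition~\ref{prop:density}) by induction. The boundary points $v=0$ and $v=x$ have measure zero, and all integrands are nonnegative, so Tonelli justifies the split and the substitutions with no convergence issue.

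The second formula comes from Theorem~\ref{thm:convolution} with the substitution $y=-\log r$, $\dd y=-\dd r/r$. This maps $(0,\infty)$ onto $(0,1)$ reversing orientation, and turns $y+(\ee^{-2y}-1)/2$ into $-\log r+(r^2-1)/2$. Absolute convergence is already established in that theorem.

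I expect no real obstacle here. The only point needing care is the third range, where the argument $x-v$ is negative and evenness of $\rho^{*m}$ must be invoked to keep $F_m$ evaluated inside $(0,1)$. Since $F_m$ is only defined on $(0,1)$, each argument should be checked to lie in $(0,1)$, which the bullet list above does.
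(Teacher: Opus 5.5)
Your argument is correct and is essentially the paper's proof. The paper splits $\int_{\R}\rho^{*m}(y-z)\rho(z)\,\dd z$ at $z=0$, substitutes $z=\pm\log u$, and then splits the second $u$-integral at $u=r$, which is exactly your three-range decomposition. The second formula also comes from the same substitution $y=-\log r$ in Theorem~\ref{thm:convolution}.

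There is one slip in your third bullet. For $v>x$ one has $x-v=\log(u/r)<0$, not $\log(r/u)>0$. It is evenness that then gives $\rho^{*m}(\log(u/r))=\rho^{*m}(-\log(u/r))=F_m(u/r)$, as your closing paragraph correctly says.
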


\begin{proof}
For $y>0$, we have
\begin{align*}
    \rho^{*(m+1)}(y)&=\int_{\R}\rho^{*m}(y-z)\rho(z)\dd z\\
    &=\int_{-\infty}^0\rho^{*m}(y-z)\rho(z)\dd z+\int_0^{+\infty}\rho^{*m}(y-z)\rho(z)\dd z\\
    &=\int_0^1\rho^{*m}(y-\log u)\rho(\log u)\frac{\dd u}{u}+\int_0^1\rho^{*m}(y+\log u)\rho(-\log u)\frac{\dd u}u,
\end{align*}
where the last equality is given by making substitutions $z=\log u$ and $z=-\log u$ to the two parts of the integral, respectively. Recall that $\rho$ is even, and hence $\rho^{*m}$ is even for all $m\ge1$. Thus, setting $y=-\log r$ gives
\begin{align*}
    F_{m+1}(r)&=\int_0^1\rho^{*m}\left(-\log(ur)\right)\rho(\log u)\frac{\dd u}u+\int_0^1\rho^{*m}\left(-\log\frac r u\right)\rho(-\log u)\frac{\dd u}u\\
    &=\int_0^1F_m(ru)\widehat\rho(u)\frac{\dd u}u+\int_r^1F_m(r/u)\widehat\rho(u)\frac{\dd u}{u}+\int_0^rF_m(u/r)\widehat\rho(u)\frac{\dd u}{u}.
\end{align*}
Finally, the change
of variables $r=\ee^{-y}$ in Theorem~\ref{thm:convolution} gives
\eqref{eq:M-cyclotomic-integral}.
\end{proof}

We shall use the following elementary endpoint estimate.  Since
\[
 0\leq \rho(y)\leq \sech^2 y\leq 4\ee^{-2|y|},
\]
comparison with the convolution powers of $\ee^{-2|y|}$ gives, by
induction on $m$,
\begin{equation}\label{eq:convolution-tail-bound}
 \rho^{*m}(y)
 \leq C_m(1+|y|)^{m-1}\ee^{-2|y|}
 \qquad(y\in\mathbb R)
\end{equation}
for a constant $C_m>0$.  Consequently, \eqref{eq:convolution-tail-bound} yields
\begin{equation}\label{eq:F-endpoint-bound}
 F_m(r)
 =O_m\!\left(r^2(1+|\log r|)^{m-1}\right)
 \qquad(r\to0^+).
\end{equation}
Moreover, $F_m$ is continuous and bounded on every interval
$[r_0,1]$ with $r_0>0$.

We are now ready to present the following.

\begin{proof}[Proof of Theorem~\ref{thm:cyclotomic-main}]
Let $\mathcal H_4$ denote the $\mathcal Z_4$-algebra generated by the
rational functions in $\mathbb Q(\ii)(r)$ whose poles on $\mathbb P^1$ lie
in $\{0,\infty\}\cup\mu_4$ and by the hyperlogarithms
$G(a_1,\ldots,a_k;r)$ with $a_j\in\Sigma_4$.  Since
$\pi\in\mathcal Z_4$ and
\[
 \arctan r=\frac{G(\ii;r)-G(-\ii;r)}{2\ii},
\]
equation \eqref{eq:rhohat-cyclotomic} gives
$\pi\widehat\rho\in\mathcal H_4$.

We prove inductively that
\begin{equation}\label{eq:F-hyperlog-claim}
 \pi^mF_m\in\mathcal H_4.
\end{equation}
The assertion is clear for $m=1$.  After changing variables in the second
and third integrals of \eqref{eq:F-recursion}, we may write
\begin{align}
 \pi^{m+1}F_{m+1}(r)
 ={}&\underbrace{\int_0^1 \pi^mF_m(ru)\,\pi\widehat\rho(u)
                  \frac{\dd u}{u}}_{\mathcal I_1(r)}
   +\underbrace{\int_r^1 \pi^mF_m(u)\,\pi\widehat\rho(r/u)
                  \frac{\dd u}{u}}_{\mathcal I_2(r)}\notag\\
  &+\underbrace{\int_0^1 \pi^mF_m(u)\,\pi\widehat\rho(ru)
                  \frac{\dd u}{u}}_{\mathcal I_3(r)}.
 \label{eq:F-recursion-fibered}
\end{align}
For the three integrals in \eqref{eq:F-recursion-fibered}, we first work with
endpoint-truncated iterated integrals and with $r$ in a compact subinterval of
$(0,1)$.  Products are expanded by the shuffle identity.

For a GPL word with no trailing zero, scaling the endpoint gives
\[
 G(a_1,\ldots,a_k;ru)
 =G(r^{-1}a_1,\ldots,r^{-1}a_k;u).
\]
If a trailing zero block is present, the same substitution gives this
identity up to a polynomial in $\log r$, with coefficients GPLs of smaller
length.  Since $\log r=G(0;r)$, these additional terms already belong to
$\mathcal H_4$.  Thus the moving letters in $\mathcal I_1$ are contained in
$\{0\}\cup r^{-1}\mu_4\cup\mu_4$.  In $\mathcal I_2$, the factor
$\widehat\rho(r/u)$ is a rational combination of logarithms with letters in
$\{0\}\cup r\mu_4$, while in $\mathcal I_3$ the factor
$\widehat\rho(ru)$ has letters in
$\{0\}\cup r^{-1}\mu_4$.  No other moving singularities occur.

For completeness, we record the parameter-differentiation step, including
its lower-endpoint term.  Put
$H(t)=G(\alpha_2(r),\ldots,\alpha_k(r);t)$.  If $\alpha_1$ moves, then
\begin{align*}
 \frac{\partial}{\partial r}G(\alpha_1,\ldots,\alpha_k;u)
 ={}&-\alpha_1'(r)\frac{H(u)}{u-\alpha_1(r)}
     -\alpha_1'(r)\frac{H(0)}{\alpha_1(r)}\\
 &+\alpha_1'(r)\int_0^u\frac{H'(t)}{t-\alpha_1(r)}\,\dd t
   +\int_0^u\frac{1}{t-\alpha_1(r)}
      \frac{\partial H(t)}{\partial r}\,\dd t.
\end{align*}
Here $H(0)=0$ for a nonempty tail in tangential regularization; for a
one-letter word $H(0)=1$, and the displayed additional term is the rational
function $-\alpha_1'/\alpha_1$.  Partial fractions, followed in the
coincident-letter case by one further integration by parts, lowers the word
length.  Iterating this argument expresses the $r$-derivative as a finite
sum of shorter iterated integrals multiplied by logarithmic derivatives of
pairwise letter differences and endpoint--letter differences.  This is the
standard fibration formula; compare \cite[Lemma~2.7]{Panzer2015}.

After extracting the constant term in the endpoint cutoff parameters, an
operation that commutes with $r$-differentiation on compact subintervals of
$(0,1)$, every nonconstant $r$-dependent factor occurring in the resulting
logarithmic derivatives is, up to a nonzero constant and a power of $r$, of
one of the forms
$$
 ar-b,\qquad a-rb,\qquad 1-rb,\qquad r-a,
 \qquad a,b\in\mu_4.
$$
Unless such a factor vanishes identically, in which case it is handled by
tangential regularization, its zeros lie in $\mu_4$.  Hermite reduction of
the rational prefactors introduces no other poles.  Thus the only possible
$r$-singularities occur at $0$ and at the points of $\mu_4$.  Induction on
the GPL word length therefore shows that each regularized
$\mathcal I_j(r)$ is a hyperlogarithmic function of $r$ with alphabet
$\Sigma_4$, up to an $r$-independent constant.

A coarse complexity bound is also immediate.  The recursion increases the
largest GPL weight by at most two at each step.  Since the initial case has
weight at most one, the largest GPL weight appearing in $\pi^mF_m$ is at
most $2m-1$.

The constants of integration are fixed by letting $r\to1^-$.  The original
integrands in $\mathcal I_1$ and $\mathcal I_3$ then converge to integrable
functions on $(0,1)$ with fixed alphabet $\Sigma_4$, so their regularized
limits lie in $\mathcal Z_4$.  For $\mathcal I_2$, choose
$r_0\in(0,1)$.  Continuity of $F_m$ and $\widehat\rho$ gives a constant
$C=C(r_0,m)$ such that, for $r_0\leq r\leq u\leq1$,
\[
 \left|F_m(u)\widehat\rho(r/u)u^{-1}\right|\leq C.
\]
Consequently $\mathcal I_2(r)=O(1-r)$ as $r\to1^-$.

Finally, the regularized expressions coincide with the original absolutely
convergent convolution integrals.  Near $u=0$, estimate
\eqref{eq:F-endpoint-bound}, together with
$\widehat\rho(u)=O(u^2)$, supplies an integrable majorant uniform for $r$ on
compact subintervals of $(0,1)$.  The remaining endpoints are controlled by
continuity and the preceding shrinking-interval estimate. Dominated convergence therefore removes the truncations and proves the
induction step in \eqref{eq:F-hyperlog-claim}.

It remains to evaluate \eqref{eq:M-cyclotomic-integral}.  By
\eqref{eq:F-endpoint-bound},
\[
 \left(-\log r+\frac{r^2-1}{2}\right)\frac{F_m(r)}r
 =O_m\!\left(r(1+|\log r|)^m\right)
 \qquad(r\to0^+),
\]
whereas the same integrand is $O_m((1-r)^2)$ as $r\to1^-$. The integral is
therefore absolutely convergent.  Since $\pi^mF_m\in\mathcal H_4$,
hyperlogarithmic integration and endpoint shuffle regularization give
\[
 \pi^m\mD(P_m)
 =\int_0^1\left(-\log r+\frac{r^2-1}{2}\right)
       \pi^mF_m(r)\frac{\dd r}{r}
 \in\mathcal Z_4.
\]
This proves \eqref{eq:cyclotomic-main}, and the construction is finite and
effective for every fixed $m$.
\end{proof}

\subsection{The case of two factors}

For this subsection, write $M_j:=\mD(P_j)$, and let
\[
 \mathsf G:=\sum_{n=0}^{\infty}\frac{(-1)^n}{(2n+1)^2}
 =L(\chi_{-4},2)
\]
denote Catalan's constant.  We begin with a positive energy identity
which reduces $M_2$ to a one-dimensional hyperlogarithmic integral.

For $y\geq0$, let
\[
 T(y):=\mathbb P(Y>y)=\int_y^{+\infty}\rho(x)\,\dd x,
\]
and, for $0<r\leq1$, put
\[
 \mathcal T(r):=T(-\log r),
 \qquad
 J(r):=\int_0^r s\mathcal T(s)\,\dd s.
\]
Note that $\mathcal T(r)\to0$ as $r\to0^+$, so we can set $\mathcal T(0)=0$.
Since $\rho$ is even,
\[
 \mathcal T(1)=\int_0^{+\infty}\rho(y)\,\dd y
 =\frac12\int_{-\infty}^{+\infty}\rho(y)\,\dd y
 =\frac12.
\]

\begin{lemma}\label{lem:tail-energy}
For $0\le r<1$,
\begin{equation}\label{eq:tail-explicit}
 \mathcal T(r)
 =\frac{2}{\pi}
 \frac{(1-6r^2+r^4)\arctan r+r^3-r+\pi r^2}
 {(1-r^2)^2}.
\end{equation}
Moreover,
\begin{equation}\label{eq:two-factor-energy}
 2M_1-M_2=8\int_0^1\frac{J(r)^2}{r^5}\,\dd r.
\end{equation}
\end{lemma}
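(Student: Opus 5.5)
The plan has two independent parts: the closed form for $\mathcal T$ comes from one differentiation, and \eqref{eq:two-factor-energy} from a Fourier/Plancherel argument built on Theorem~\ref{thm:fourier-main}.

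\textbf{The closed form \eqref{eq:tail-explicit}.} I would start from the definitions. Substituting $x=-\log s$ gives
\[
 \mathcal T(r)=\int_{-\log r}^{+\infty}\rho(x)\,\dd x=\int_0^r\widehat\rho(s)\,\frac{\dd s}{s}.
\]
So it is enough to show two things about the right-hand side $\Phi(r)$ of \eqref{eq:tail-explicit}. First, $\Phi(0)=0$, which holds because the numerator vanishes at $r=0$. Second, $\Phi'(r)=\widehat\rho(r)/r$ on $(0,1)$, where $\widehat\rho$ is given by \eqref{eq:rhohat-cyclotomic}. This is a routine rational-function computation once $\frac{\dd}{\dd r}\arctan r=1/(1+r^2)$ is used. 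The $\pi r^2$ term in the numerator is exactly what is needed to absorb the $\frac{\pi}{2}$ appearing in \eqref{eq:rhohat-cyclotomic}. As a consistency check, $\Phi(r)\to\frac12=\mathcal T(1)$ as $r\to1^-$.

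\textbf{The energy identity: algebraic step.} By Theorem~\ref{thm:fourier-main},
\[
 2M_1-M_2=\frac4\pi\int_0^{\infty}\frac{2(1-\varphi)-(1-\varphi^2)}{t^2(t^2+4)}\,\dd t
 =\frac4\pi\int_0^\infty\frac{(1-\varphi(t))^2}{t^2(t^2+4)}\,\dd t .
\]
This integral is absolutely convergent by the estimates in the proof of Theorem~\ref{thm:fourier-main}.

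\textbf{The energy identity: Fourier step.} Next I would write $1-\varphi$ as a sine transform of the tail. Integrating by parts in $1-\varphi(t)=\E(1-\cos tY)$ and using that $\rho$ is even gives
\[
 1-\varphi(t)=2t\int_0^\infty \sin(ty)\,T(y)\,\dd y .
\]
Equivalently, $(1-\varphi(t))/t$ equals $-\ii$ times the Fourier transform of the odd extension $U(y)=\operatorname{sgn}(y)\,T(|y|)$. I would then factor $1/(t^2+4)=|2-\ii t|^{-2}$ and read $(2-\ii t)^{-1}$ (up to sign convention) as the Fourier transform of the one-sided kernel $k(y)=\ee^{2y}\mathbf 1_{y<0}$. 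Consequently $(1-\varphi(t))/(t(2-\ii t))$ is, up to a unimodular factor, the transform of $V=U*k$. For $y>0$ one has
\[
 V(y)=\ee^{2y}\int_y^\infty \ee^{-2x}T(x)\,\dd x ,
\]
and with $r=\ee^{-y}$ this becomes $V(y)=r^{-2}J(r)$. For $y<0$, $V$ picks up contributions from both signs of $U$. To handle this I would instead use the symmetric kernel $\frac14\ee^{-2|y|}$, whose transform is $1/(t^2+4)$, together with a second integration by parts in which $J$ appears naturally.

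\textbf{Where the difficulty lies.} Plancherel on the evenly symmetrised integral then gives
\[
 \frac4\pi\int_0^\infty\frac{(1-\varphi)^2}{t^2(t^2+4)}\,\dd t=c\int_0^\infty V(y)^2\,\dd y .
\]
With $V(y)=r^{-2}J(r)$ and $\dd y=-\dd r/r$, the right-hand side equals $c\int_0^1 J(r)^2r^{-5}\,\dd r$, and one must show $c=8$. I expect the main obstacle to be precisely the negative half-line: showing that the contributions of $V$ on $y<0$ either vanish or equal those on $y>0$, and tracking $2\pi$ normalisations to reach exactly $8$.

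If that bookkeeping becomes murky, I would switch to a direct real-variable route. Write $M_m=\int_0^\infty(1-\ee^{-2t})\,\mathbb P(S_m>t)\,\dd t$, express $2T-\mathbb P(Y_1+Y_2>\cdot)$ through $T$ by splitting according to the signs of $Y_1$ and $Y_2$, and integrate by parts twice to reach $8\int J^2r^{-5}\,\dd r$. Whichever route is used, I would confirm the constant with a test density for which both sides are explicit.
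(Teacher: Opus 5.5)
Your proof of \eqref{eq:tail-explicit}, your reduction to $2M_1-M_2=\frac4\pi\int_0^\infty(1-\varphi)^2t^{-2}(t^2+4)^{-1}\dd t$, and your sine representation $1-\varphi(t)=2t\int_0^\infty T(y)\sin(ty)\dd y$ all match the paper. The gap is in the final step, which is the actual content of \eqref{eq:two-factor-energy}. Take your one-sided kernel $k(y)=\ee^{2y}\mathbf{1}_{y<0}$ and $V=U*k$. Then Plancherel gives exactly $2M_1-M_2=4\int_{\mathbb R}V(y)^2\dd y$. However, $V$ does not vanish on $y<0$. For $u>0$ one has $V(-u)=\ee^{-2u}\bigl(K(0)-\int_0^u\ee^{2w}T(w)\dd w\bigr)$, where $K(y)=\int_y^\infty\ee^{-2x}T(x)\dd x$, and this is not of the form $r^{-2}J(r)$. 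So the identity with constant $8$ is equivalent to $\int_{-\infty}^0V^2=\int_0^\infty V^2$, and you leave exactly this open. The phrase ``Plancherel on the evenly symmetrised integral'' does not supply it. Checking the constant on a test density is not a proof, and your two fallback routes are only named, not carried out.

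The missing step is short. For $u>0$ set $A(u)=V(u)=\ee^{2u}K(u)$ and $B(u)=V(-u)$. Differentiating $V(y)=\ee^{2y}\int_y^\infty\ee^{-2z}U(z)\dd z$ gives $A'=2A-T$ and $B'=-2B-T$. Moreover $A(0)=B(0)=K(0)$, and $A,B\to0$ exponentially, using $T(y)\le2\ee^{-2y}$. Hence $(A-B)'=2(A+B)$, so $A^2-B^2=\frac14\frac{\mathrm d}{\mathrm d u}(A-B)^2$, and therefore $\int_0^\infty(A^2-B^2)\dd u=-\frac14\bigl(A(0)-B(0)\bigr)^2=0$. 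Since $K(y)=J(\ee^{-y})$, this yields $2M_1-M_2=8\int_0^\infty\ee^{4y}K(y)^2\dd y=8\int_0^1J(r)^2r^{-5}\dd r$.

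The paper avoids the half-line issue differently. It inserts the sine representation and applies Fubini with the explicit kernel $\int_0^\infty\frac{\sin(ty)\sin(tz)}{t^2+4}\dd t=\frac{\pi}{8}\bigl(\ee^{-2|y-z|}-\ee^{-2(y+z)}\bigr)$, which is essentially your symmetric-kernel idea. It then reduces the resulting double integral through $K$ and one integration by parts to $8\int_0^\infty\ee^{4y}K^2\dd y$. Once the symmetry step above is added, your Plancherel route is a valid alternative.
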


\begin{proof}
Since $\mathcal T'(r)=\widehat\rho(r)/r$ and $\mathcal T(0)=0$, differentiation of
the right-hand side of \eqref{eq:tail-explicit}, followed by
\eqref{eq:rhohat-cyclotomic}, proves the first assertion.

For the second assertion, Theorem~\ref{thm:fourier-main} gives
\begin{equation}\label{eq:energy-fourier-start}
 2M_1-M_2
 =\frac{4}{\pi}\int_0^{+\infty}
 \frac{(1-\varphi(t))^2}{t^2(t^2+4)}\,\dd t.
\end{equation}
By Corollary~\ref{cor:characteristic}, $$1-\varphi(t)=1-\E\ee^{\ii tY}=\int_\R(1-\ee^{\ii ty})\rho(y)\dd y=2\int_0^{+\infty}(1-\cos(ty))\rho(y)\dd y.$$
Integration by parts gives
\begin{equation}\label{eq:phi-tail}
 1-\varphi(t)=2t\int_0^{+\infty}T(y)\sin(ty)\,\dd y.
\end{equation}
The estimate $\rho(y)\leq4\ee^{-2|y|}$ implies
$T(y)\leq2\ee^{-2y}$ for $y\geq0$.
Equation \eqref{eq:phi-tail} transforms \eqref{eq:energy-fourier-start} into
\begin{align*}
    2M_1-M_2&=\frac{16}{\pi}\int_0^{+\infty}\frac{\dd t}{t^2+4}\int_0^{+\infty}\int_0^{+\infty}T(y)T(z)\sin(ty)\sin(tz)\dd y\dd z\\
    &=\frac{16}{\pi}\int_0^{+\infty}\int_0^{+\infty}T(y)T(z)\dd y\dd z\int_0^{+\infty}\frac{\sin(ty)\sin(tz)}{t^2+4}\dd t.
\end{align*}
Since $$\int_0^{+\infty}\frac{\sin(ty)\sin(tz)}{t^2+4}\dd t=\frac12\int_0^{+\infty}\frac{\cos((y-z)t)-\cos((y+z)t)}{t^2+4}\dd t=\frac{\pi}{8}\left(\ee^{-2|y-z|}-\ee^{-2|y+z|}\right),$$ we have
\begin{equation}\label{eq:energy-double}
 \begin{aligned}
 2M_1-M_2
 =2\int_0^{+\infty}\int_0^{+\infty}
 T(y)T(z)\bigl(\ee^{-2|y-z|}-\ee^{-2(y+z)}\bigr)
 \,\dd y\dd z.
 \end{aligned}
\end{equation}
Let
\[
 K(y):=\int_y^{+\infty}\ee^{-2x}T(x)\,\dd x.
\] Then $K(y)\le\frac12\ee^{-4y}$ for $y\ge0$.
The double integral in \eqref{eq:energy-double} can be simplified as
\begin{align*}
 2M_1-M_2
 ={}&2\int_0^{+\infty}T(z)
 \left(\ee^{-2z}\int_0^zT(y)\ee^{2y}\,\dd y
       +\ee^{2z}\int_z^{+\infty}T(y)\ee^{-2y}\,\dd y\right)\dd z\\
 &-2K(0)\int_0^{+\infty}\ee^{-2z}T(z)\,\dd z\\
 ={}&2\int_0^{+\infty}\ee^{-2z}T(z)
       \int_0^z\ee^{2y}T(y)\,\dd y\,\dd z
    +2\int_0^{+\infty}\ee^{2z}T(z)K(z)\,\dd z-2K(0)^2\\
 ={}&2\int_0^{+\infty}\ee^{2y}T(y)
       \int_y^{+\infty}\ee^{-2z}T(z)\,\dd z\,\dd y
    +2\int_0^{+\infty}\ee^{2z}T(z)K(z)\,\dd z-2K(0)^2\\
 ={}&4\int_0^{+\infty}\ee^{2y}T(y)K(y)\,\dd y-2K(0)^2\\
 ={}&-2\left(\left.\ee^{4y}K(y)^2\right|_0^{+\infty}
       -4\int_0^{+\infty}\ee^{4y}K(y)^2\,\dd y\right)-2K(0)^2\\
 ={}&8\int_0^{+\infty}\ee^{4y}K(y)^2\,\dd y.
\end{align*}
Finally, under $r=\ee^{-y}$ one has
$K(y)=J(r)$, and \eqref{eq:two-factor-energy} follows.
\end{proof}

The remaining evaluation is finite and exact.  We give the details of
the hyperlogarithmic reduction in order to separate it from numerical
recognition.

We first record a simple observation about regularized endpoints.
At either endpoint, let $x=r$ or $x=1-r$, respectively.  If a function
admits an asymptotic expansion through the constant term of the form
\[
 \sum_{j=-N}^{0}\sum_{k=0}^{K}
 c_{j,k}x^j(\log x)^k+o(1)
 \qquad(x\to0^+),
\]
for some nonnegative integers \(N,K\) and coefficients
\(c_{j,k}\in\mathbb C\), we define its regularized value at that endpoint to be $c_{0,0}$.
We denote these values by $\operatorname*{Reg}_{r\to0^+}$ and
$\operatorname*{Reg}_{r\to1^-}$.

\begin{lemma}\label{lem:regularized-endpoints}
Let $\mathcal Q\in C^1(0,1)$ admit Laurent--logarithmic asymptotic
expansions of the above form at both endpoints.  Suppose that
$\mathcal Q'=f$ on $(0,1)$ and that $f\in L^1(0,1)$.  Then
$\mathcal Q$ has finite one-sided ordinary limits at both endpoints,
all divergent terms in its endpoint expansions vanish, and
\[
 \operatorname*{Reg}_{r\to1^-}\mathcal Q(r)
 -\operatorname*{Reg}_{r\to0^+}\mathcal Q(r)
 =\int_0^1 f(r)\,\dd r.
\]
\end{lemma}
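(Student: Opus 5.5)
The plan is to use the fundamental theorem of calculus on compact subintervals and then show that the divergent parts of the endpoint expansions must vanish.

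\textbf{Step 1 (finite limits).} Since $f\in L^1(0,1)$, for every $0<a<b<1$ we have
\[
 \mathcal Q(b)-\mathcal Q(a)=\int_a^b f(r)\,\dd r,
\]
because $\mathcal Q\in C^1(0,1)$. Absolute continuity of the integral gives the Cauchy criterion for $\mathcal Q(b)$ as $b\to1^-$ and for $\mathcal Q(a)$ as $a\to0^+$. Hence the ordinary one-sided limits $\mathcal Q(0^+)$ and $\mathcal Q(1^-)$ exist and are finite. Letting $a\to0^+$ and $b\to1^-$ then gives
\[
 \mathcal Q(1^-)-\mathcal Q(0^+)=\int_0^1 f(r)\,\dd r.
\]

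\textbf{Step 2 (divergent terms vanish).} At a fixed endpoint, with $x=r$ or $x=1-r$, write the expansion as $D(x)+c_{0,0}+o(1)$. Here
\[
 D(x)=\sum_{(j,k)\neq(0,0)}c_{j,k}x^j(\log x)^k,
\]
with $j\le0$, and the terms with $j=0$ have $k\ge1$. By Step~1, $\mathcal Q$ has a finite limit, so $D(x)$ tends to a finite limit $L$ as $x\to0^+$. I then show that all $c_{j,k}$ with $(j,k)\neq(0,0)$ vanish, using the standard fact that the functions $x^j(\log x)^k$ have strictly ordered growth at $0^+$. Order the pairs lexicographically by $(-j,k)$, so the dominant term comes first. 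If some coefficient were nonzero, take the lexicographically largest pair $(j_0,k_0)$ with $c_{j_0,k_0}\neq0$. Then
\[
 D(x)\sim c_{j_0,k_0}x^{j_0}(\log x)^{k_0},
\]
and $|x^{j_0}(\log x)^{k_0}|\to\infty$ because $j_0<0$, or $j_0=0$ with $k_0\ge1$. This contradicts the finiteness of $L$. Hence $D\equiv0$.

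\textbf{Step 3 (identify the limit).} With $D\equiv0$, the expansion shows that the ordinary limit equals $c_{0,0}$, which is the regularized value. Combining this with Step~1 yields
\[
 \operatorname*{Reg}_{r\to1^-}\mathcal Q(r)-\operatorname*{Reg}_{r\to0^+}\mathcal Q(r)=\int_0^1 f(r)\,\dd r.
\]

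The only point needing care is the asymptotic-dominance argument in Step~2, in particular that the finitely many Laurent–logarithmic monomials are asymptotically independent. This follows from the ordering above, since the ratio of any two distinct monomials tends to $0$ or $\infty$.
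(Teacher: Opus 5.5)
Your proof is correct and follows essentially the same route as the paper. The paper also uses the fundamental theorem of calculus on compact subintervals with $f\in L^1(0,1)$ to get finite one-sided limits, then notes that a finite limit forces the divergent coefficients to vanish, so $c_{0,0}$ is the ordinary limit. Your lexicographic dominance argument on the monomials $x^j(\log x)^k$ just spells out the step the paper states in one line.
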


\begin{proof}
For $0<a<b<1$, the fundamental theorem of calculus gives \(\mathcal Q(b)-\mathcal Q(a)=\int_a^b f(r)\dd r\).
Since $f\in L^1(0,1)$, this identity shows that $\mathcal Q(r)$ is
Cauchy as $r$ approaches either endpoint, and hence has finite
one-sided limits there.  In an asymptotic expansion of the displayed
form, the existence of a finite limit forces all coefficients with
$j<0$, as well as all $c_{0,k}$ with $k\geq1$, to vanish.  Thus the
regularized constant term $c_{0,0}$ equals the corresponding ordinary
limit.  Passing to the two endpoints in the preceding identity proves
the result.
\end{proof}

\begin{lemma}\label{lem:J-hyperlog-reduction}
Put
\[
 \mathfrak I_3:=\operatorname{Im}\operatorname{Li}_3\!\left(\frac{1+\ii}{2}\right)
\]
and
\begin{equation}\label{eq:A4-definition}
 \begin{aligned}
 \mathcal A_4:={}&
 2\operatorname{Re}\bigl(G(0,1,\ii,\ii;1)-G(0,1,\ii,-\ii;1)\bigr)\\
 &+\operatorname{Re}\bigl(G(0,\ii,1,\ii;1)-G(0,\ii,1,-\ii;1)\bigr)\\
 &+\operatorname{Re}\bigl(G(0,\ii,-1,\ii;1)-G(0,\ii,-1,-\ii;1)\bigr)\\
 &+2\operatorname{Re}\bigl(G(0,-1,\ii,\ii;1)-G(0,-1,\ii,-\ii;1)\bigr).
 \end{aligned}
\end{equation}
The quantity in \eqref{eq:A4-definition} satisfies
\begin{equation}\label{eq:J-square-reduction}
 \begin{aligned}
 \int_0^1\frac{J(r)^2}{r^5}\,\dd r
 ={}&-\frac{8\mathcal A_4}{\pi^2}-\frac{8\mathsf G^2}{\pi^2}
 +\left(-\frac{4\log2}{\pi}+\frac{3}{2\pi}+\frac{8}{\pi^2}\right)\mathsf G\\
 &-\frac{24\mathfrak I_3}{\pi}+\frac{\log^22}{4}+\frac{29\pi^2}{48}
 -\frac38-\frac{1}{4\pi}-\frac{35\zeta(3)}{8\pi^2}
 +\frac{1}{4\pi^2}.
 \end{aligned}
\end{equation}
\end{lemma}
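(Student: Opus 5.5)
The plan is to make $J$ explicit, reduce $J^2/r^5$ to a hyperlogarithmic integrand, write down an explicit primitive, and evaluate that primitive at the endpoints with Lemma~\ref{lem:regularized-endpoints}.

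\textbf{Step 1: closed form for $J$.} Starting from \eqref{eq:tail-explicit}, I compute $J(r)=\int_0^r s\mathcal T(s)\,\dd s$. The integrand is a rational function with poles at $\pm1$, plus a rational function times $\arctan s$. Integrating by parts on the $\arctan$ terms and using partial fractions, I expect
\[
 J(r)=\frac{1}{\pi}\Bigl(a(r)\arctan r+b(r)\log(1+r^2)+c(r)\log(1-r^2)+\pi d(r)+e(r)\Bigr),
\]
with $a,b,c,d,e\in\mathbb Q(r)$ having poles only at $\pm1$. The constant of integration is fixed by $J(0)=0$. Two checks are available: near $r=0$, $J(r)=O(r^4)$, which follows from $\mathcal T(s)=O(s^2)$ and makes $J^2/r^5=O(r^3)$ integrable; near $r=1$, the apparent poles of $J$ must cancel, since $J$ is continuous on $[0,1]$ with $J(1)=K(0)$. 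In GPL language $J$ has weight at most one, with letters in $\Sigma_4$.

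\textbf{Step 2: a primitive for the square.} Squaring gives $\pi^2J^2/r^5$ as a $\mathbb Q(r)$-combination, with poles in $\{0\}\cup\mu_4$, of the weight-$\le2$ products $\arctan^2 r$, $\arctan r\log(1\pm r^{2})$, $\log(1\pm r^{2})\log(1\pm r^{2})$, $\pi\arctan r$, and so on. All of these are expressible through $G(a,b;r)$ with $a,b\in\Sigma_4$. I then build a primitive $\mathcal Q(r)$ as a $\mathbb Q(\ii)(r)$-combination of GPLs of weight $\le3$ in $r$, working by weight from the top down. For each term (rational function) $\times$ (weight-2 GPL), I use Hermite reduction on the rational part: the polynomial-type part integrates by parts and lowers the weight, and the simple-pole residues at $a\in\Sigma_4$ produce $G(a,\dots;r)$ of one higher weight. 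This is the hyperlogarithmic primitive that the first computer-assisted check verifies. I certify it by symbolic differentiation, $\mathcal Q'=J^2/r^5$, in exact arithmetic.

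\textbf{Step 3: endpoint regularization.} By Lemma~\ref{lem:regularized-endpoints}, the integral equals $\operatorname*{Reg}_{r\to1^-}\mathcal Q-\operatorname*{Reg}_{r\to0^+}\mathcal Q$. At $r=0$, every GPL with a nonzero letter vanishes, so only rational terms and powers of $\log r$ contribute, and these are easy. At $r=1$, I expand the rational coefficients in $x=1-r$ and the GPLs $G(\dots;r)$ with letters $1$ through shuffle regularization at $\overrightarrow{10}$. The constant term is then a $\mathbb Q(\ii,\pi)$-combination of regularized $G(\dots;1)$ of weight $\le3$ (with the overall $\pi^{-2}$), and weight-$4$ values arise from the product structure.

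\textbf{Step 4: reduction to the stated constants (main obstacle).} The last and hardest step is turning this large combination of level-4 GPL values into the stated basis: $\mathsf G^2$, $\mathsf G\log2$, $\mathfrak I_3$, $\zeta(3)$, and the residual weight-4 combination $\mathcal A_4$. For weights $\le3$, I would use the known reductions, namely $G(\pm\ii;1)$, $G(0,\pm\ii;1)\sim\mathrm{Li}_2(\pm\ii)$ giving $\mathsf G$ and $\pi^2$, and weight-3 values giving $\operatorname{Im}\mathrm{Li}_3((1+\ii)/2)$, $\zeta(3)$, $\pi\mathsf G$, and $\pi^3$. These are obtained via the shuffle/stuffle relations and the symmetries $a\mapsto\bar a$ and $r\mapsto(1-r)/(1+r)$. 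The weight-4 words of the form $G(0,\cdot,\cdot,\pm\ii;1)$ I would not reduce; I would collect them into exactly the combination $\mathcal A_4$. The risk is that leftover weight-4 words are not visibly in that combination, so I would normalize all words to a Lyndon basis by shuffle and check exactly that the remainder vanishes.
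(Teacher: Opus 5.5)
Your overall plan is the paper's: explicit $J$, a hyperlogarithmic primitive built by Hermite reduction and certified by differentiation, regularized endpoints via Lemma~\ref{lem:regularized-endpoints}, then reduction of constants. However, Step~1 is wrong, and the error breaks your weight bookkeeping.

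\textbf{The closed form for $J$.} Up to a constant, the antiderivative of $s(1-6s^2+s^4)/(1-s^2)^2$ is $\frac{s^2}{2}-\frac{2}{1-s^2}-2\log(1-s^2)$. Integrating the $\arctan$ part of $s\mathcal T(s)$ by parts therefore produces the product $-\frac4\pi\log(1-r^2)\arctan r$. It also leaves $\frac4\pi\Lambda(r)$, where $\Lambda(r)=\int_0^r\log(1-s^2)\,(1+s^2)^{-1}\,\dd s$. Separately, the $\pi r^2$ in the numerator of $\mathcal T$ gives $\log(1-r^2)$ with coefficient $1$, not a rational multiple of $1/\pi$. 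This is \eqref{eq:J-elementary}; in GPL form \eqref{eq:J-GPL} it contains the weight-two words $G(\pm1,\pm\ii;r)$. Since $\Lambda'=\log(1-r^2)/(1+r^2)$, $\Lambda$ is not a $\mathbb Q(r)$-combination of $\arctan r$ and $\log(1\pm r^2)$. So $J$ has weight two, not at most one.

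\textbf{Consequence for the primitive.} $J^2/r^5$ has weight four, and your primitive must contain weight-four hyperlogarithms. The paper first integrates by parts via \eqref{eq:I-by-parts}, so its integrand $J\mathcal T/r^3$ has weight three and its primitive weight four. This matters for the statement. For a primitive $\sum_w C_w(r)G(w;r)$ with $|w|\le3$, the constant terms at $r=1$ are $\mathbb Q(\ii)[\pi,\pi^{-1}]$-linear combinations of single values $G(w';1)$ with $|w'|\le3$. Your Step~3 then has no source for $-8\mathcal A_4/\pi^2$. Its words $G(0,1,\ii,\pm\ii;1),\dots$ carry a genuinely weight-four part; by Lemma~\ref{lem:A4-binomial}, $\mathcal A_4$ contains $2\operatorname{Li}_4(1/2)$. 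Those values come from the weight-four part of the primitive evaluated at $r=1$, not from ``the product structure''.

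\textbf{Two smaller corrections.} First, at $r=0$ it is not true that only rational terms and powers of $\log r$ contribute. The coefficients $C_w(r)$ have poles at $0$, and $r^{-j}G(w;r)$ has a nonzero constant term when $G(w;r)$ vanishes to order $j$; for example, $r^{-1}G(1;r)\to-1$. You must expand each $G(w;r)$ at $0$ up to the pole order of its coefficient, as the paper does. Second, in Step~4, normalising to a Lyndon basis uses only the shuffle relations among words. Level-$4$ values satisfy further relations, such as stuffle, duality, distribution, and path changes like $z=\ii t/(1+\ii t)$, which the paper uses to express $\mathfrak I_3$ through $G$-values at $1$. A shuffle-only check may therefore leave a nonzero remainder even though \eqref{eq:J-square-reduction} holds. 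The paper verifies its endpoint reduction against a fixed list of such relations.
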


\begin{proof}
A direct integration of $J'(r)=r\mathcal T(r)$ gives
\begin{equation}\label{eq:J-elementary}
 \begin{aligned}
 J(r)=\frac{2}{\pi}&
 \left(\frac{r^2-1}{2}-\frac{2r^2}{1-r^2}-2\log(1-r^2)\right)\arctan r
 +\frac4\pi\Lambda(r)\\
 &+\frac{r}\pi
 +\log(1-r^2)+\frac{r^2}{1-r^2},
 \end{aligned}
\end{equation}
where
\[
 \Lambda(r):=\int_0^r\frac{\log(1-s^2)}{1+s^2}\,\dd s.
\]
For example, \eqref{eq:J-elementary} can be checked simply by
differentiation and the condition $J(0)=0$. Now consider the case where $r\to1^-$. The substitution $s=\tan\theta$ gives
\[
 \Lambda(1)=\frac\pi4\log2-\mathsf G.
\]
For $J(1)$, note that
\begin{align*}
    J(r)&=\frac{4}{\pi}\Lambda(r)+\frac{r}{\pi}+\frac{r^2-1}{\pi}\arctan r+\frac4\pi\left(\frac{r^2}{1-r^2}+\log(1-r^2)\right)\left(\frac\pi4-\arctan r\right).
\end{align*}
Since
\begin{align*}
    &\lim_{r\to1^-}\left(\frac{r^2}{1-r^2}+\log(1-r^2)\right)\left(\frac\pi4-\arctan r\right)\\
    =&\lim_{r\to1^-}\left(\frac{r^2}{1-r^2}+\log(1-r^2)\right)\arctan\frac{1-r}{1+r}\\
    =&\lim_{r\to1^-}\left(\frac{r^2}{1-r^2}+\log(1-r^2)\right)\frac{1-r}{1+r}\\
    =&\lim_{r\to1^-}\frac{r^2+(1-r^2)\log(1-r^2)}{(1+r)^2}\\
    =&\frac{1}{4},
\end{align*}
we have $$J(1)=\frac{2+\pi\log2-4\mathsf G}{\pi}.$$

Rewriting \eqref{eq:J-elementary} as hyperlogarithms yields the compact
identity
\begin{equation}\label{eq:J-GPL}
 \begin{aligned}
 J(r)={}&\frac{r(r^2-\pi r-1)}{\pi(r^2-1)}
 +G(1;r)+G(-1;r)\\
 &-\frac{\ii(r^2+1)^2}{2\pi(r^2-1)}G(\ii;r)
 +\frac{\ii(r^2+1)^2}{2\pi(r^2-1)}G(-\ii;r)\\
 &+\frac{2\ii}{\pi}\sum_{\varepsilon=\pm1}
 \bigl(G(\varepsilon,\ii;r)-G(\varepsilon,-\ii;r)\bigr).
 \end{aligned}
\end{equation}
Again, differentiation and the value at $r=0$ verify
\eqref{eq:J-GPL} exactly.

Recall that in the proof of Lemma~\ref{lem:tail-energy} we have shown $J(r)=O(r^4)$ at the origin. Thus integration by parts gives
\begin{equation}\label{eq:I-by-parts}
 \int_0^1\frac{J(r)^2}{r^5}\,\dd r
 =-\frac{J(1)^2}{4}
 +\frac12\int_0^1\frac{J(r)\mathcal T(r)}{r^3}\,\dd r.
\end{equation}
We record the exact reduction used for the last integral.  First expand
products of hyperlogarithms by the shuffle identity.  For a rational
function $Q(r)$ with poles in $\Sigma_4$ and a nonempty word
$w=(a_1,\ldots,a_k)$, Hermite reduction writes
\[
 Q(r)=S'(r)+\sum_{a\in\Sigma_4}\frac{c_a}{r-a}
\]
with $S$ rational.  Integration by parts and the defining differential
equation for $G$ then give
\begin{equation}\label{eq:hyperlog-recursion}
 \begin{aligned}
 \int Q(r)G(w;r)\,\dd r
 ={}&S(r)G(w;r)+\sum_{a\in\Sigma_4}c_aG(a,w;r)\\
 &-\int\frac{S(r)}{r-a_1}G(a_2,\ldots,a_k;r)\,\dd r.
 \end{aligned}
\end{equation}
The last integral has a word of smaller length, so the procedure
terminates. Applying \eqref{eq:hyperlog-recursion} to the second term on the right-hand side of \eqref{eq:I-by-parts}, using \eqref{eq:tail-explicit} and
\eqref{eq:J-GPL}, produces a finite expression
\[
 \mathcal P(r)=\sum_w C_w(r)G(w;r),
\]
where the rational coefficients $C_w(r)$ are listed in the fixed file
\path{MMP5_m2_primitive.tsv} in the repository \cite{TangZhangCode}.  The script
\path{MMP5_m2_primitive_generator.py} produces this table by the
recursion above.  Independently, the script
\path{MMP5_m2_primitive_verifier.py} reads the fixed table, constructs
the integrand directly from \eqref{eq:tail-explicit} and
\eqref{eq:J-GPL}, and verifies coefficient by coefficient that
\begin{equation}\label{eq:primitive-derivative-certificate}
 \mathcal P'(r)=\frac{J(r)\mathcal T(r)}{2r^3}.
\end{equation}
It also verifies $J(0)=0$ and that every coefficient $C_w(r)$ has pole order
at most two at each endpoint.  Thus the verification is independent of the
algorithm that generated the table.  The regularized endpoint reduction is
a separate exact calculation, which we now describe.

Put \(x=1-r\) and, for a word \(w=(a,w')\), write
\[
 E_w(x):=G(w;1-x).
\]
Then
\[
 \frac{\dd}{\dd x}E_{(a,w')}(x)
 =\frac{E_{w'}(x)}{x+a-1}.
\]
Together with \(E_{()}(x)=1\) and with the shuffle-regularized
constant term \(G(w;1)\), this recursively determines the complete
Laurent--logarithmic expansion of every \(G(w;r)\) at \(r=1\).
The expansion at \(r=0\) is obtained similarly from
\[
 \frac{\dd}{\dd r}G(a,w';r)=\frac{G(w';r)}{r-a},
 \qquad
 \operatorname*{Reg}_{r\to0^+}G(w;r)=0
 \quad(w\neq()).
\]
Direct inspection of the coefficient table shows that every
\(C_w(r)\) has pole order at most two at both endpoints, so terms
through order two suffice.

For the exact reduction of the endpoint constants we use
\[
 \begin{aligned}
 \log2&=G(-1;1),\\
 \pi&=-2\ii\bigl(G(\ii;1)-G(-\ii;1)\bigr),\\
 \mathsf G
 &=\frac{G(0,\ii;1)-G(0,-\ii;1)}{2\ii},\\
 \zeta(3)&=-G(0,0,1;1).
 \end{aligned}
\]
Moreover, on the principal branch
$\operatorname{Li}_3(z)=-G(0,0,1;z)$.  For $0\leq t\leq1$, the path
$z=\ii t/(1+\ii t)$ remains in the cut plane
$\mathbb C\setminus[1,\infty)$.  Along this path,
\[
 \frac{\dd z}{z}=\left(\frac1t-\frac1{t-\ii}\right)\dd t,
 \qquad
 \frac{\dd z}{z-1}=-\frac{\dd t}{t-\ii}.
\]
Consequently,
\[
 \operatorname{Li}_3\left(\frac{\ii t}{1+\ii t}\right)
 =\int_0^t
   \left(\frac{\dd x}{x}-\frac{\dd x}{x-\ii}\right)
   \left(\frac{\dd y}{y}-\frac{\dd y}{y-\ii}\right)
   \frac{\dd u}{u-\ii},
\]
where the last display is an iterated integral along the real segment
$0\leq u\leq y\leq x\leq t$.
Therefore, if we set
\[
 \mathcal L_3:=G(0,0,\ii;1)-G(0,\ii,\ii;1)-G(\ii,0,\ii;1)+G(\ii,\ii,\ii;1),
\]
then the substitution above yields
\[
 \operatorname{Li}_3\!\left(\frac{1+\ii}{2}\right)=\operatorname{Li}_3\!\left(\frac{\ii}{1+\ii}\right)=\mathcal L_3,
 \qquad
 \mathfrak I_3=\operatorname{Im}\operatorname{Li}_3\!\left(\frac{1+\ii}{2}\right)=\frac{\mathcal L_3-\overline{\mathcal L_3}}{2\ii}.
\]
Substituting the 117 rational coefficients from the fixed table
into these recursions, expanding all products by the shuffle identity,
and collecting the constant terms gives the following endpoint
identity.
\begin{equation}\label{eq:primitive-endpoint-certificate}
\begin{aligned}
 &\operatorname*{Reg}_{r\to1^-}\mathcal P(r)
 -\operatorname*{Reg}_{r\to0^+}\mathcal P(r)\\
 ={}&
 -\frac{8\mathcal A_4}{\pi^2}
 -\frac{4\mathsf G^2}{\pi^2}
 +\left(
   -\frac{6\log2}{\pi}
   +\frac{3}{2\pi}
   +\frac{4}{\pi^2}
  \right)\mathsf G
 -\frac{24\mathfrak I_3}{\pi}\\
 &\quad
 +\frac{\log^22}{2}
 +\frac{\log2}{\pi}
 +\frac{29\pi^2}{48}
 -\frac38
 -\frac1{4\pi}
 -\frac{35\zeta(3)}{8\pi^2}
 +\frac{5}{4\pi^2}.
\end{aligned}
\end{equation}

The finite reduction leading to
\eqref{eq:primitive-endpoint-certificate} is verified exactly by
\path{MMP5_m2_endpoint_rational_verifier.py}, using the fixed relations in
\path{MMP5_m2_endpoint_relations.tsv} in the repository \cite{TangZhangCode}.  No numerical evaluation or
integer-relation recognition is used.

The regularized endpoint values in
\eqref{eq:primitive-endpoint-certificate} are obtained by first
combining all shuffle-expanded terms and then taking the constant
terms in the resulting Laurent--logarithmic expansions.  To identify
these regularized values with the original definite integral, note
from \eqref{eq:tail-explicit} and \eqref{eq:J-elementary} that
\[
 \mathcal T(r)=2r^2+O(r^3),
 \qquad
 J(r)=\frac{r^4}{2}+O(r^5)
 \qquad(r\to0^+).
\]
Consequently,
\[
 \frac{J(r)\mathcal T(r)}{2r^3}=O(r^3)
 \qquad(r\to0^+).
\]
At \(r=1\), both \(J(r)\) and \(\mathcal T(r)\) have finite one-sided
limits, and hence
\[
 \frac{J(r)\mathcal T(r)}{2r^3}=O(1)
 \qquad(r\to1^-).
\]
Therefore,
\[
 \frac{J(r)\mathcal T(r)}{2r^3}\in L^1(0,1).
\]
Applying Lemma~\ref{lem:regularized-endpoints} to
\eqref{eq:primitive-derivative-certificate} shows that the regularized
constant terms in \eqref{eq:primitive-endpoint-certificate} coincide with
the ordinary one-sided limits and that their difference is exactly the
original absolutely convergent definite integral.

Finally,
\[
 J(1)=\frac{2+\pi\log2-4\mathsf G}{\pi}.
\]
Substitution of \eqref{eq:primitive-endpoint-certificate} and this
value into \eqref{eq:I-by-parts} gives
\eqref{eq:J-square-reduction}. Thus the Python script verifies the antiderivative identity, while the
regularized endpoint identity follows from the exact
Laurent--logarithmic expansion described above.  No numerical fitting
is involved.
\end{proof}

For the final weight-$4$ reduction, define the inverse central binomial
sums
\[
 S_{c,4}:=\sum_{n=1}^{\infty}\frac{c^n}{n^4\binom{2n}{n}}
 \qquad(0<c\leq4).
\]

\begin{lemma}\label{lem:A4-binomial}
One has
\begin{equation}\label{eq:A4-binomial}
 \mathcal A_4
 =\frac32S_{2,4}-\frac7{32}S_{4,4}
 -\frac{\pi\mathsf G\log2}{2}
 +\frac{\pi^2\log^22}{8}-\frac{\pi^4}{768}.
\end{equation}
Moreover,
\begin{align}
 S_{4,4}
 &=8\operatorname{Li}_4\!\left(\frac12\right)
 -\frac{19\pi^4}{360}+\frac{\log^42}{3}
 +\frac{2\pi^2\log^22}{3},\label{eq:S44}\\
 S_{2,4}
 &=-2\pi \mathfrak I_3+\frac52\operatorname{Li}_4\!\left(\frac12\right)
 +\frac{19\pi^4}{576}+\frac{5\log^42}{48}
 +\frac{\pi^2\log^22}{48}.\label{eq:S24}
\end{align}
\end{lemma}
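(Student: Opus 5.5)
The plan is to handle \eqref{eq:S44} and \eqref{eq:S24} first, since these are classical, and then treat \eqref{eq:A4-binomial} as the genuinely new weight-$4$ identity.

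\textbf{The binomial sums.} The starting point is the standard integral representation
\[
 \sum_{n\ge1}\frac{(2\sin\theta)^{2n}}{n^2\binom{2n}{n}}=2\theta^2 ,
\]
together with its iterated versions. Dividing by $n^2$ twice more and integrating against $\cot\theta\,\dd\theta$ twice gives
\[
 S_{c,4}=8\int_0^{\alpha}\cot\theta\int_0^{\theta}\cot\phi\;\phi^2\,\dd\phi\,\dd\theta,
 \qquad c=4\sin^2\alpha .
\]
For $S_{4,4}$ we take $\alpha=\pi/2$; for $S_{2,4}$ we take $\alpha=\pi/4$. Writing $\cot\theta=\ii+2\ii/(\ee^{2\ii\theta}-1)$ and substituting $z=\ee^{2\ii\theta}$ turns each integral into iterated integrals on the unit circle. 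These reduce to $\operatorname{Li}_k$ values at $-1$ when $\alpha=\pi/2$, and at $\ii$ when $\alpha=\pi/4$.

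The standard reductions then finish the job:
\begin{itemize}
 \item $\operatorname{Re}\operatorname{Li}_4(\ii)$ is a rational multiple of $\pi^4$;
 \item the weight-$4$ depth-one and depth-two values at $\pm1$ and $\pm\ii$ reduce to $\operatorname{Li}_4(1/2)$ and $\log 2$;
 \item the remaining imaginary part of $\operatorname{Li}_3$ appears through $\operatorname{Li}_3((1+\ii)/2)$, which is exactly $\mathfrak I_3$.
\end{itemize}
These evaluations are known in the literature (Borwein--Broadhurst--Kamnitzer, and others), and I would cite them if allowed. Otherwise I would verify them with the same exact GPL machinery: the real parts of the cyclotomic GPLs reduce via the standard level-$4$ basis in the Au reference.

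\textbf{The identity \eqref{eq:A4-binomial}.} Each word $G(0,a,b,\pm\ii;1)$ can be written as a convergent nested sum, since it has a leading $0$ and no trailing zero. The differences $G(\dots,\ii)-G(\dots,-\ii)$ come from the $\arctan$ combination $(G(\ii;r)-G(-\ii;r))/2\ii$ in \eqref{eq:J-GPL}. So $\mathcal A_4$ is a combination of iterated integrals of the form
\[
 \int_0^1\frac{\dd r}{r}\,(\ldots)\arctan r ,
\]
with interior letters $\pm1$ and $\ii$.

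I would convert $\mathcal A_4$ into the $\theta$-integral form as follows:
\begin{enumerate}
 \item Substitute $r=\tan(\theta/2)$ or $r=\tan\theta$. This makes $\arctan r$ linear and turns $\dd r/(r\mp1)$ and $\dd r/(r-\ii)$ into combinations of $\cot$, $\tan$ and constants.
 \item Recognize the result as the double $\cot$-integrals above with $\alpha=\pi/4$ and $\alpha=\pi/2$.
 \item Collect the lower-order boundary pieces. These should produce $\pi\mathsf G\log2$, $\pi^2\log^22$ and $\pi^4$, with the stated coefficients.
\end{enumerate}

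Alternatively, and more safely in the spirit of the paper, I would just reduce both sides to a fixed basis of level-$4$ weight-$4$ GPL values. On the right, substitute \eqref{eq:S44} and \eqref{eq:S24}. Then check the resulting linear relation among the words in \eqref{eq:A4-definition} exactly, using shuffle, stuffle, distribution and duality relations, with an exact-arithmetic verifier like the endpoint one.

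\textbf{Main obstacle.} The hard step is the exact weight-$4$ reduction of $\mathcal A_4$. Level-$4$ weight-$4$ values span a space of fairly large dimension. Showing that this particular real combination lands in the span of $\{\operatorname{Li}_4(1/2),\pi\mathfrak I_3,\pi\mathsf G\log2,\ldots\}$ requires a complete set of relations, in particular the distribution relations linking level $4$ to level $2$. I would first attempt the trigonometric-substitution route, because it explains the appearance of $S_{2,4}$ and $S_{4,4}$ structurally. I would fall back to a certified linear-algebra reduction in the level-$4$ basis if the boundary bookkeeping becomes unmanageable.
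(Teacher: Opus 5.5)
Your fallback plan is, in substance, the paper's proof. Your primary trigonometric route is not yet a proof.

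The paper also takes \eqref{eq:S44} and \eqref{eq:S24} from the literature (\cite[Example~7.5]{Au2026}). It proves \eqref{eq:A4-binomial} by a finite exact certificate. The shuffle-expanded difference of the two sides is written as a $\mathbb Q$-linear combination of endpoint-shuffle relations, shuffle multiples of a Cayley-duality identity, convergent double-shuffle relations and two distribution relations, and this is checked in exact arithmetic.

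Two organizational differences matter.
\begin{itemize}
\item The paper does not substitute the closed forms on the right. Via the beta integral and the symmetrization $x=r/(1+r)$ it writes $S_{c,4}=\int_0^1\operatorname{Li}_3\bigl(cr/(1+r)^2\bigr)\dd r/r$. It then pulls this back to the explicit $\Sigma_4$-words \eqref{eq:S24-word-form} and \eqref{eq:S44-word-form}. So \eqref{eq:A4-binomial} becomes an identity among level-$4$ words, independent of the cited evaluations. Your version makes \eqref{eq:A4-binomial} depend logically on \eqref{eq:S44}--\eqref{eq:S24}. It also needs $\Sigma_4$-word expressions for $\operatorname{Li}_4(1/2)$ and $\mathfrak I_3$. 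The latter is the paper's $\mathcal L_3$, and the former comes from an analogous pullback.
\item No basis and no complete set of relations is needed, only a certificate for this single identity.
\end{itemize}
The duality that does the work is the Cayley one, $t\mapsto(1-t)/(1+t)$. It permutes $\Sigma_4\cup\{\infty\}$ and reverses $[0,1]$, and the tangential rescalings by $2$ and $1/2$ produce the powers of $\log2$. The duality $t\mapsto1-t$ does not preserve $\Sigma_4$.

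On the trigonometric route: your formula $S_{c,4}=8\int_0^\alpha\cot\theta\int_0^\theta\phi^2\cot\phi\dd\phi\dd\theta$ is correct. But step~2 for $\mathcal A_4$ has no mechanism behind it.
\begin{itemize}
\item The substitution $r=\tan\theta$, $z=\ee^{2\ii\theta}$ is just the M\"obius map $r=\ii(1-z)/(1+z)$.
\item It turns $\mathcal A_4$ into iterated integrals along the arc from $1$ to $\ii$ in the full alphabet $\{0,\pm1,\pm\ii\}$.
\item By contrast, the $\theta$-forms of $S_{2,4}$ and $S_{4,4}$ are arc integrals with letters $0$ and $1$ only.
\end{itemize}
``Recognizing'' the double $\cot$-integrals is therefore the entire content of the identity. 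Eliminating the letters $\pm\ii$ and $-1$ requires exactly the relation machinery of your fallback.

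Finally, your bullet claiming that weight-$4$ depth-one and depth-two values at $\pm\ii$ reduce to $\operatorname{Li}_4(1/2)$ and $\log2$ is overstated. For instance, $\operatorname{Im}\operatorname{Li}_4(\ii)$ is Dirichlet's $\beta(4)$, which is not expected to reduce. Only the particular combinations occurring in $S_{2,4}$ and $S_{4,4}$ do. This is harmless, since you cite those evaluations.
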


\begin{proof}
The beta integral and Tonelli's theorem give, for $0<c\leq4$,
\begin{align}
 S_{c,4}
 &=\sum_{n=1}^{\infty}\frac{c^n}{n^3}
   \int_0^1x^{n-1}(1-x)^n\,\dd x\notag\\
 &=\int_0^1\frac{\operatorname{Li}_3(cx(1-x))}{x}\,\dd x.
 \label{eq:Sc4-integral}
\end{align}
For reference, convergence at $c=4$ also follows directly from
$\binom{2n}{n}\geq4^n/(2\sqrt n)$, which gives a summable majorant
$2n^{-7/2}$.

The function $x\mapsto\operatorname{Li}_3(cx(1-x))$ is symmetric under
$x\mapsto1-x$.  Averaging \eqref{eq:Sc4-integral} with its transformed
version, restricting to $0<x<1/2$, and then putting $x=r/(1+r)$ yields the
nonsingular formula
\begin{equation}\label{eq:Sc4-symmetric-integral}
 S_{c,4}
 =\int_0^1\operatorname{Li}_3\!\left(\frac{cr}{(1+r)^2}\right)
   \frac{\dd r}{r}.
\end{equation}
Write $\eta_a=\dd r/(r-a)$.  Since
$\operatorname{Li}_3(z)=-G(0,0,1;z)$, direct pullback in
\eqref{eq:Sc4-symmetric-integral} gives
\begin{align}
 S_{2,4}
 &=-\int_0^1
   \eta_0(\eta_0-2\eta_{-1})^2
   (\eta_{\ii}+\eta_{-\ii}-2\eta_{-1}),
 \label{eq:S24-word-form}\\
 S_{4,4}
 &=-\int_0^1
   \eta_0(\eta_0-2\eta_{-1})^2
   (2\eta_1-2\eta_{-1}).
 \label{eq:S44-word-form}
\end{align}
Indeed, the two pullbacks of $\dd z/z$ are
$\eta_0-2\eta_{-1}$, while the pullbacks of $\dd z/(z-1)$ are the last
parentheses in \eqref{eq:S24-word-form} and
\eqref{eq:S44-word-form}.  Thus no limiting or coalescing-letter argument is
needed.

It remains to prove \eqref{eq:A4-binomial}.  We describe the exact finite
certificate, so that the computer-assisted part is mathematically explicit.
For a word $w$ in $\Sigma_4$, write
$I(w)=G(w;1)$ with the fixed tangential-base-point convention above.  We use
only the following identities.

First, $I(0)=I(1)=0$ in shuffle regularization, and multiplication of GPLs
is given by the shuffle product.  Second, let $\tau$ denote the letter
substitution induced by pullback under the Cayley map
$q(z)=(1-z)/(1+z)$:
\[
\begin{array}{lll}
 \tau(\eta_0)=\eta_1-\eta_{-1},&
 \tau(\eta_1)=\eta_0-\eta_{-1},&
 \tau(\eta_{-1})=-\eta_{-1},\\
 \tau(\eta_{\ii})=\eta_{-\ii}-\eta_{-1},&
 \tau(\eta_{-\ii})=\eta_{\ii}-\eta_{-1}.&
\end{array}
\]
Extend $\tau$ multiplicatively and linearly to the word algebra.  Put
$\ell=I(-1)=\log2$, and let
\[
 \Phi:=\sum_w I(w)e_w
\]
be the regularized Chen series in the completed noncommutative word algebra,
where the sum runs over all words $w$ in $\Sigma_4$, the symbols $e_a$
($a\in\Sigma_4$) are noncommuting generators, and
$e_w=e_{a_1}\cdots e_{a_k}$ for $w=(a_1,\ldots,a_k)$.  We write
$|w|:=k$ and $w^{\mathrm{rev}}:=(a_k,\ldots,a_1)$.  The Cayley map $q$
reverses the real path, and
\[
 1-q(t)=2t+O(t^2)\quad(t\to0),
 \qquad
 q(t)=\frac{1-t}{2}+O((1-t)^2)\quad(t\to1).
\]
Thus the two tangential parameters are rescaled by $2$ and $1/2$.
We distinguish the pullback action $\tau$ on words from its induced
dot action on the Chen series, defined by
\[
 \tau\!\cdot\!\Phi:=\sum_w I(\tau(w))e_w,
\]
where $I$ is extended linearly to linear combinations of words.
Path reversal and tangential-base-point regularization therefore give
\[
 \Phi^{-1}
 =\exp(\ell e_0)(\tau\!\cdot\!\Phi)\exp(\ell e_1);
\]
compare \cite[Proposition~2.9 and Lemma~2.10]{Au2026}.
Taking the coefficient of a word $w$ gives the Cayley-duality identity
\begin{equation}\label{eq:cayley-duality-coefficients}
 (-1)^{|w|}I(w^{\mathrm{rev}})
 =\sum_{w=0^p v1^q}\frac{\ell^{p+q}}{p!q!}\,I(\tau(v)),
\end{equation}
where the sum runs over all decompositions $w=0^p v1^q$ with $p,q\geq0$.
The products on the right are expanded by shuffle.

Third, for positive integers $s_1,\ldots,s_d$, define the multiple
polylogarithm by
\[
 \operatorname{Li}_{s_1,\ldots,s_d}(x_1,\ldots,x_d)
 :=\sum_{n_1>\cdots>n_d\geq1}
 \frac{x_1^{n_1}\cdots x_d^{n_d}}
      {n_1^{s_1}\cdots n_d^{s_d}}
\]
whenever the series converges, and write
$\boldsymbol{s}:=(s_1,\ldots,s_d)$ and
$|\boldsymbol{s}|:=s_1+\cdots+s_d$.  For every convergent word of the
following form, with $a_1,\ldots,a_d\neq0$,
\begin{equation}\label{eq:gpl-mpl-dictionary}
 I(0^{s_1-1},a_1,\ldots,0^{s_d-1},a_d)
 =(-1)^d\operatorname{Li}_{s_1,\ldots,s_d}
 \!\left(a_1^{-1},\frac{a_1}{a_2},\ldots,
             \frac{a_{d-1}}{a_d}\right).
\end{equation}
Using \eqref{eq:gpl-mpl-dictionary}, we apply the convergent stuffle product
to the series on the right and compare it with the shuffle product on the
left.  Finally, for $N=2,4$ we use the convergent distribution relation
\begin{equation}\label{eq:mpl-distribution-certificate}
 \sum_{\varepsilon_1^N=\cdots=\varepsilon_d^N=1}
 \operatorname{Li}_{\boldsymbol{s}}
   (\varepsilon_1x_1,\ldots,\varepsilon_dx_d)
 =N^{d-|\boldsymbol{s}|}
  \operatorname{Li}_{\boldsymbol{s}}(x_1^N,\ldots,x_d^N).
\end{equation}
This follows term by term by projecting each summation index onto multiples
of $N$.

After inserting \eqref{eq:S24-word-form} and
\eqref{eq:S44-word-form}, and replacing
\[
 \log2=I(-1),\qquad
 \pi=-2\ii\bigl(I(\ii)-I(-\ii)\bigr),\qquad
 \mathsf G=\frac{I(0,\ii)-I(0,-\ii)}{2\ii},
\]
let $\Delta_{\mathrm{cert}}$ be the shuffle-expanded difference between the
two sides of \eqref{eq:A4-binomial}.  It has $92$ nonzero word coefficients.
The fixed file \path{MMP5_m2_A4_certificate.tsv} records an identity
\begin{equation}\label{eq:A4-certificate-identity}
 \Delta_{\mathrm{cert}}
 =\sum_{\nu=1}^{583}q_\nu\mathcal R_\nu,
 \qquad q_\nu\in\mathbb Q,
\end{equation}
where the $\mathcal R_\nu$ consist of $208$ endpoint-shuffle relations,
$231$ shuffle multiples of \eqref{eq:cayley-duality-coefficients}, $142$
convergent double-shuffle relations, and two instances of
\eqref{eq:mpl-distribution-certificate}.  The program
\path{MMP5_m2_A4_exact_certificate.py} reconstructs every relation
from its defining formula and verifies
\eqref{eq:A4-certificate-identity} coefficient by coefficient using
\path{fractions.Fraction}.  No floating-point number, numerical
recognition, or external computer-algebra system enters this verification.
Since every $\mathcal R_\nu$ evaluates to zero, this proves
\eqref{eq:A4-binomial}.

The evaluations \eqref{eq:S44} and \eqref{eq:S24} are given in
\cite[Example~7.5]{Au2026}. Substitution into
\eqref{eq:A4-binomial} also gives the useful equivalent form
\[
 \mathcal A_4
 =-3\pi\mathfrak I_3
  +2\operatorname{Li}_4\!\left(\frac12\right)
  +\frac{43\pi^4}{720}
  +\frac{\log^42}{12}
  +\frac{\pi^2\log^22}{96}
  -\frac{\pi\mathsf G\log2}{2}.
\qedhere\]
\end{proof}

We are now ready to present the following.

\begin{proof}[Proof of Theorem~\ref{thm:two-factor-main}]
Substituting \eqref{eq:A4-binomial}--\eqref{eq:S24} into
\eqref{eq:J-square-reduction} cancels $\mathfrak I_3$ and gives
\begin{equation}\label{eq:J-square-final}
 \begin{aligned}
 \int_0^1\frac{J(r)^2}{r^5}\,\dd r
 ={}&\frac{91\pi^2}{720}-\frac38-\frac{1}{4\pi}
 +\frac{\log^22}{6}+\frac{3\mathsf G}{2\pi}\\
 &+\frac{1}{\pi^2}\left(
 -16\operatorname{Li}_4\!\left(\frac12\right)
 -\frac23\log^42-8\mathsf G^2+8\mathsf G
 -\frac{35}{8}\zeta(3)+\frac14
 \right).
 \end{aligned}
\end{equation}
The one-factor value established in Corollary~\ref{cor:one-factor} is
\[
 M_1=\frac{6\mathsf G}{\pi}-\log2-\frac12-\frac1\pi.
\]
The identity $M_2=2M_1-8\int_0^1J(r)^2r^{-5}\,\dd r$ from
Lemma~\ref{lem:tail-energy}, together with
\eqref{eq:J-square-final}, now simplifies to
\eqref{eq:two-factor-main}.
\end{proof}

\section{Dependence on the number of factors}\label{sec:variation}

Write $M_m:=\mD(P_m)$ for every $m\geq1$.

\subsection{Continuous interpolation and finite differences}

For a real parameter $\lambda>0$, define
\begin{equation}\label{eq:continuous-interpolation}
 \mathcal M(\lambda):=\frac{4}{\pi}\int_0^{+\infty}
 \frac{1-\varphi(t)^\lambda}{t^2(t^2+4)}\,\dd t.
\end{equation}
The estimates in the proof of Theorem~\ref{thm:fourier-main} show that
this integral is finite, and \eqref{eq:fourier-main} gives
$\mathcal M(m)=M_m$ for every positive integer $m$.

\begin{proposition}
For every integer $k\geq1$ and every $\lambda>0$,
\[
 (-1)^{k-1}\mathcal M^{(k)}(\lambda)>0.
\]
\end{proposition}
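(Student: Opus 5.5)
The plan is to differentiate under the integral sign in \eqref{eq:continuous-interpolation}. Write $\varphi(t)^\lambda=\ee^{\lambda\log\varphi(t)}$. This is legitimate because Lemma~\ref{lem:phi-estimates} gives $0<\varphi(t)\le1$, with strict inequality for $t\neq0$. Set $L(t):=-\log\varphi(t)$, so that $L(t)>0$ for $t\neq0$ and $L(0)=0$. Then
\[
 \frac{\partial^k}{\partial\lambda^k}\bigl(1-\ee^{-\lambda L(t)}\bigr)
 =(-1)^{k-1}L(t)^k\ee^{-\lambda L(t)},
\]
and the candidate formula is
\[
 (-1)^{k-1}\mathcal M^{(k)}(\lambda)
 =\frac4\pi\int_0^{+\infty}\frac{L(t)^k\varphi(t)^\lambda}{t^2(t^2+4)}\,\dd t.
\]
The integrand is strictly positive for every $t>0$, so the integral is strictly positive, which is the claim.

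The main work is justifying the differentiation, for which I would use dominated convergence on compact $\lambda$-intervals $[\lambda_0,\lambda_1]\subset(0,\infty)$.

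Near $t=0$, we have $1-\varphi(t)\le \tfrac{t^2}{2}\sigma^2$ from the proof of Theorem~\ref{thm:fourier-main}, and $\varphi$ is bounded below by $\sech(\pi t/2)$. Hence $L(t)=-\log\varphi(t)=O(t^2)$ on, say, $[0,1]$. This gives the majorant
\[
 \frac{L^k\varphi^\lambda}{t^2(t^2+4)}\le C t^{2k-2},
\]
which is bounded for $k\ge1$.

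At infinity, the lower bound $\varphi(t)\ge\sech(\pi t/2)$ gives $L(t)\le \log\cosh(\pi t/2)\le \pi t/2$. The upper bound gives $\varphi^\lambda\le 1$. Hence the integrand is at most $C t^{k}/t^4$ on $[1,\infty)$, and this is not integrable once $k\ge3$. This is the step I expect to be the real obstacle, and I would resolve it with the upper bound in \eqref{eq:phi-bounds}. For $t\ge1$,
\[
 \varphi(t)^{\lambda}\le (1+t^2\log2)^{\lambda}\,2^{\lambda}\ee^{-\lambda\pi t/2}\le C_{\lambda_1}(1+t^2)^{\lambda_1}\ee^{-\lambda_0\pi t/2}.
\]
The decay $\ee^{-\lambda_0\pi t/2}$ kills any polynomial factor $t^k$. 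This yields an integrable majorant uniform for $\lambda\in[\lambda_0,\lambda_1]$, for every $k$.

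With these majorants, the standard theorem on differentiating parameter integrals applies inductively in $k$, since each $\lambda$-derivative of the integrand is dominated by the next-order majorant. This proves the displayed formula for all $\lambda>0$ and all $k\ge1$, and strict positivity follows as above. As a remark, the case $k=1$ does not even need the exponential decay, since $L\le\pi t/2$ already gives $t^{-3}$ decay. The sharp exponential bound is only needed for higher derivatives.
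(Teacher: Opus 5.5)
Your proposal is correct and follows essentially the same route as the paper. Both arguments differentiate under the integral sign on compact $\lambda$-intervals, dominating all derivatives with three estimates: $-\log\varphi(t)=O(t^2)$ near $0$, $-\log\varphi(t)\le\pi t/2$ from the lower bound in \eqref{eq:phi-bounds}, and exponential decay of $\varphi^\lambda$ from the upper bound. Both then read off strict positivity from $0<\varphi(t)<1$ for $t\neq0$. The only cosmetic difference is that the paper bounds $\varphi^\lambda\le\varphi^{a}$ using monotonicity in $\lambda$, whereas you bound each factor of $(1+t^2\log2)^\lambda 2^\lambda\ee^{-\lambda\pi t/2}$ separately.
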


\begin{proof}
Fix a compact interval $[a,b]\subset(0,\infty)$.  Near the origin,
$-\log\varphi(t)=O(t^2)$.  For large $t$, the lower bound in
\eqref{eq:phi-bounds} gives
\[
 -\log\varphi(t)\leq\log\cosh\!\left(\frac{\pi t}{2}\right)\le\frac{\pi t}{2}.
\]
The upper bound gives $$\varphi(t)\le(1+t^2\log2)\sech\!\left(\frac{\pi t}{2}\right)\le2(1+t)^2\ee^{-\pi t/2}.$$
Since $0<\varphi(t)<1$ for $t>0$, we have  $$\varphi(t)^\lambda\le\varphi(t)^a\le2^a(1+t)^{2a}\ee^{-a\pi t/2}$$ uniformly for $\lambda\in[a,b]$.  These estimates dominate every
$\lambda$-derivative of the integrand in
\eqref{eq:continuous-interpolation}.  Differentiation under the integral
sign is therefore valid and gives
\[
 \mathcal M^{(k)}(\lambda)
 =-\frac{4}{\pi}\int_0^{+\infty}
 \frac{\varphi(t)^\lambda\log^k\!\varphi(t)}
 {t^2(t^2+4)}\,\dd t.
\]
We have $$(-1)^{k-1}\mathcal M^{(k)}(\lambda)
 =\frac{4}{\pi}\int_0^{+\infty}
 \frac{\varphi(t)^\lambda(-\log\!\varphi(t))^k}
 {t^2(t^2+4)}\,\dd t>0.$$
The sign is strict because $0<\varphi(t)<1$ for $t\neq0$ by
Lemma~\ref{lem:phi-estimates}.
\end{proof}

For a sequence $(a_m)$, write $\Delta a_m:=a_{m+1}-a_m$ and let
$\Delta^r a_m:=\Delta(\Delta^{r-1}a_m)$.

\begin{corollary}\label{cor:finite-differences}
For all integers $m,r\geq1$,
\begin{equation}\label{eq:finite-differences}
 (-1)^{r+1}\Delta^rM_m>0.
\end{equation}
\end{corollary}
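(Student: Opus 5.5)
We work directly with the Fourier representation \eqref{eq:fourier-main}, writing $M_m=\mathcal M(m)$, and compute the forward differences inside the integral. Since the numerator $1-\varphi(t)^m$ is affine in $\varphi(t)^m$, and $\Delta$ acts on the index $m$, we have
\[
 \Delta^r\bigl(1-\varphi(t)^m\bigr)
 =-\varphi(t)^m\bigl(\varphi(t)-1\bigr)^r
 =(-1)^{r+1}\varphi(t)^m\bigl(1-\varphi(t)\bigr)^r
\]
for every $r\geq1$. This is the standard identity $\Delta^r x^m=x^m(x-1)^r$, proved by induction on $r$, and it holds pointwise in $t$.

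Finite differences of the sequence are finite linear combinations of the values $M_{m},\ldots,M_{m+r}$. Each of these is given by an absolutely convergent integral, by the proof of Theorem~\ref{thm:fourier-main}. Linearity of the integral therefore gives
\[
 (-1)^{r+1}\Delta^rM_m
 =\frac{4}{\pi}\int_0^{+\infty}
 \frac{\varphi(t)^m\bigl(1-\varphi(t)\bigr)^r}{t^2(t^2+4)}\,\dd t .
\]
This integral converges absolutely. Near $t=0$, the bound $0\le 1-\varphi(t)\le \tfrac{t^2}{2}\E Y^2$ from the proof of Theorem~\ref{thm:fourier-main} shows the integrand is $O(t^{2r-2})$, which is integrable for $r\ge1$. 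At infinity the integrand is $O(t^{-4})$.

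Strict positivity comes from Lemma~\ref{lem:phi-estimates}: for $t>0$ we have $0<\varphi(t)<1$, so the integrand is continuous and strictly positive on $(0,\infty)$, and hence the integral is strictly positive. This proves \eqref{eq:finite-differences}.

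There is no real obstacle here. The only point needing care is to justify exchanging $\Delta^r$ with the integral, which reduces to the absolute convergence of each $M_j$ already established.

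An alternative route is to apply the preceding Proposition through the mean value theorem for divided differences, $\Delta^r M_m=\mathcal M^{(r)}(\xi)$ for some $\xi\in(m,m+r)$. The direct computation above is more self-contained.
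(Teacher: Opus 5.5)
Your proposal is correct and matches the paper's proof: apply $\Delta^r$ inside the Fourier representation \eqref{eq:fourier-main} using $\Delta^r x^m=x^m(x-1)^r$, which gives $(-1)^{r+1}\Delta^rM_m=\frac{4}{\pi}\int_0^{+\infty}\frac{\varphi(t)^m(1-\varphi(t))^r}{t^2(t^2+4)}\,\dd t$. Strict positivity then follows from $0<\varphi(t)<1$ for $t\neq0$ (Lemma~\ref{lem:phi-estimates}). Your explicit justification of the interchange via absolute convergence is a detail the paper leaves implicit.
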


\begin{proof}
Applying $\Delta^r$ to \eqref{eq:fourier-main} gives
\[
 (-1)^{r+1}\Delta^rM_m
 =\frac{4}{\pi}\int_0^{+\infty}
 \frac{\varphi(t)^m(1-\varphi(t))^r}
 {t^2(t^2+4)}\,\dd t.
\]
The integral is strictly positive by Lemma~\ref{lem:phi-estimates}.
\end{proof}

Taking $r=1$ and $r=2$ in \eqref{eq:finite-differences} shows that
$(M_m)$ is strictly increasing and strictly concave.  Formula
\eqref{eq:fourier-main} also gives strict subadditivity:
\[
 M_{m+n}<M_m+M_n\qquad(m,n\geq1),
\]
because
$(1-a^m)+(1-a^n)-(1-a^{m+n})=(1-a^m)(1-a^n)>0$ for $0<a<1$.

\subsection{\texorpdfstring{Large-$m$}{Large-m} asymptotics}

We finish with the proof of Theorem~\ref{thm:asymptotic-main}.  The
argument is a Laplace scaling at the unique maximum
$\varphi(0)=1$.

\begin{proof}
The holomorphy of $Z_{\D}(s;q)$ on $|\operatorname{Re}s|<2$ implies that
$\varphi$ is real analytic near the origin.  Since it is even, the cumulant
expansion through order four has an $O(t^6)$ remainder.  Lemma~\ref{lem:cumulants}
therefore gives
\begin{equation}\label{eq:log-phi}
 \log\varphi(t)=\log\left[1-\frac{\sigma^2}{2}t^2+\frac{\kappa_4+3\sigma^4}{24}t^4+O(t^6)\right]
 =-\frac{\sigma^2t^2}{2}+\frac{\kappa_4t^4}{24}+O(t^6)
 \qquad(t\to0).
\end{equation}
Rewrite \eqref{eq:fourier-main} as
\begin{align*}
    M_m&=\frac1\pi\int_0^{+\infty}\left(1-\varphi(t)^m\right)\left(\frac{1}{t^2}-\frac{1}{t^2+4}\right)\!\dd t\\
    &=\frac1\pi\left(\int_0^{+\infty}\frac{1-\varphi(t)^m}{t^2}\dd t-\frac\pi4+\int_0^{+\infty}\frac{\varphi(t)^m}{t^2+4}\dd t\right).
\end{align*}
Then we obtain
\begin{equation}\label{eq:asymptotic-decomposition}
 M_m=\frac{I_m+J_m}{\pi}-\frac14,
 \qquad
 I_m=\int_0^{+\infty}\frac{1-\varphi(t)^m}{t^2}\,\dd t,
 \qquad
 J_m=\int_0^{+\infty}\frac{\varphi(t)^m}{t^2+4}\,\dd t.
\end{equation}

We first consider $I_m$.  Put
\[
 a_m(v)=\varphi\!\left(\frac{v}{\sigma\sqrt m}\right)^m.
\]
After substitution $v=\sigma\sqrt m\,t$,
\[
 \frac{I_m}{\sigma\sqrt m}
 =\int_0^{+\infty}\frac{1-a_m(v)}{v^2}\,\dd v.
\]
For each fixed $v>0$, equation \eqref{eq:log-phi} gives
$$a_m(v)=\exp\left[m\log\varphi\left(\frac{v}{\sigma\sqrt{m}}\right)\right]=\exp\left[-\frac{v^2}{2}+\frac{\kappa_4v^4}{24\sigma^4}m^{-1}+O(m^{-2})\right].$$
Thus, 
\[
 m\,\frac{\ee^{-v^2/2}-a_m(v)}{v^2}
 \longrightarrow
 -\frac{\kappa_4}{24\sigma^4}v^2\ee^{-v^2/2}\qquad (m\to+\infty).
\]
We now justify passage to the limit under the integral sign.  Choose
$\delta>0$ and constants $c,C>0$ such that, for $|t|\leq\delta$,
\[
 \log\varphi(t)\leq-ct^2,
 \quad
 \left|\log\varphi(t)+\frac{\sigma^2t^2}{2}
 -\frac{\kappa_4t^4}{24}\right|\leq C|t|^6.
\]
Then for $0<v\le\delta\sigma\sqrt{m}$,
$$\log a_m(v)\le-\frac{c}{\sigma^2}v^2,\quad\left|\log a_m(v)+\frac{v^2}{2}-\frac{\kappa_4v^4}{24\sigma^4m}\right|\le\frac{Cv^6}{\sigma^6m^2}.$$
By the mean value theorem, $$\left|\ee^{-v^2/2}-a_m(v)\right|=\ee^\xi\left|-\frac{v^2}{2}-\log a_m(v)\right|,$$ where $\xi$ lies between $-v^2/2$ and $\log a_m(v)$. The above inequalities give $$\ee^\xi\le\max\left\{\ee^{-v^2/2},\ee^{\log a_m(v)}\right\}\le\max\left\{\ee^{-v^2/2},\ee^{-cv^2/\sigma^2}\right\}=\exp\left(-\min\left\{\frac12,\frac{c}{\sigma^2}\right\}v^2\right),$$
$$\left|\frac{v^2}{2}+\log a_m(v)\right|\le\frac{|\kappa_4|v^4}{24\sigma^4m}+\frac{Cv^6}{\sigma^6m^2}\le\left(\frac{|\kappa_4|}{24}+C\delta^2\right)\frac{v^4}{\sigma^4m}.$$
Thus, with
\[
 C_1=\frac{|\kappa_4|/24+C\delta^2}{\sigma^4},
 \qquad
 c_1=\min\left\{\frac12,\frac{c}{\sigma^2}\right\},
\]
we have
\begin{equation}\label{eq:I-domination}
 m\,\frac{|\ee^{-v^2/2}-a_m(v)|}{v^2}
 \leq C_1v^2\ee^{-c_1v^2}
 \qquad(0<v\leq\delta\sigma\sqrt m).
\end{equation}

For the complementary range, continuity, strict inequality away from the
origin, and decay of $\varphi$ imply
\[
 \eta_\delta:=\sup_{t\geq\delta}\varphi(t)<1.
\]
Thus we have
\[
 m\int_{\delta\sigma\sqrt m}^{+\infty}
 \frac{a_m(v)}{v^2}\,\dd v
 \leq\frac{\sqrt{m}}{\delta\sigma}\eta_\delta^m,\qquad m\int_{\delta\sigma\sqrt{m}}^{+\infty}\frac{\ee^{-v^2/2}}{v^2}\dd v\le\frac{\sqrt{m}}{\delta\sigma}\ee^{-\delta^2\sigma^2m/2}.\]
Together with these tail estimates, \eqref{eq:I-domination}
permits an application of the dominated convergence theorem, which gives
\[
 \begin{aligned}
 m\left(
 \frac{I_m}{\sigma\sqrt m}
 -\int_0^{+\infty}\frac{1-\ee^{-v^2/2}}{v^2}\,\dd v
 \right)
 &\longrightarrow
 -\frac{\kappa_4}{24\sigma^4}
 \int_0^{+\infty}v^2\ee^{-v^2/2}\,\dd v\qquad(m\to+\infty).
 \end{aligned}
\]
Note that $$\int_0^{+\infty}\frac{1-\ee^{-v^2/2}}{v^2}\,\dd v=\int_0^{+\infty}v^2\ee^{-v^2/2}\,\dd v=\sqrt{\frac\pi2}.$$ Hence,
\begin{equation}\label{eq:I-asymptotic}
 I_m=\sigma\sqrt{\frac{\pi m}{2}}
 -\frac{\kappa_4}{24\sigma^3}\sqrt{\frac{\pi}{2m}}
 +o(m^{-1/2}).
\end{equation}

Next, applying the same substitution to $J_m$ gives
\[
 \sigma\sqrt m\,J_m
 =\int_0^{+\infty}
 \frac{a_m(v)}{4+v^2/(\sigma^2m)}\,\dd v.
\]
With the constants $\delta,c,C>0$ chosen above and
$c_0=c/\sigma^2$, we have
$a_m(v)\leq\ee^{-c_0v^2}$ for
$0<v\leq\delta\sigma\sqrt m$. Define
\[
 b_m(v)=
 \begin{cases}
 \displaystyle\frac{a_m(v)}{4+v^2/(\sigma^2m)},
   &0\leq v\leq\delta\sigma\sqrt m,\\[2mm]
 0,&v>\delta\sigma\sqrt m.
 \end{cases}
\]
Then $0\leq b_m(v)\leq \ee^{-c_0v^2}/4$ on $[0,\infty)$, and $b_m(v)$ converges pointwise to $\ee^{-v^2/2}/4$ as $m\to+\infty$.  On the remaining range,
\[
 \int_{\delta\sigma\sqrt m}^{+\infty}
 \frac{a_m(v)}{4+v^2/(\sigma^2m)}\dd v
 \leq \eta_\delta^m
 \int_{\delta\sigma\sqrt{m}}^{+\infty}\frac{\sigma^2m}{v^2}\dd v=\frac{\sigma\sqrt{m}}{\delta}\eta_\delta^m.
\]
The dominated convergence theorem on $[0,\infty)$,
together with this tail estimate, yields
\begin{equation}\label{eq:J-asymptotic}
 J_m=\frac{1}{4\sigma}\sqrt{\frac{\pi}{2m}}+o(m^{-1/2}).
\end{equation}
Finally, substituting \eqref{eq:I-asymptotic} and
\eqref{eq:J-asymptotic} into \eqref{eq:asymptotic-decomposition} proves
\eqref{eq:asymptotic-main}.
\end{proof}

\begin{remark}
Numerically,
\[
 \frac{\sigma}{\sqrt{2\pi}}=0.4148053538\ldots,
 \qquad
 \frac{1}{\sqrt{2\pi}}
 \left(\frac{1}{4\sigma}-\frac{\kappa_4}{24\sigma^3}\right)
 =0.0811125651\ldots.
\]
Thus the first three terms in \eqref{eq:asymptotic-main} are
$0.4148053538\sqrt m-0.25+0.0811125651/\sqrt m$.
\end{remark}

\section*{Computational certificates}

The computational files are publicly available in the repository
\cite{TangZhangCode}. They separate the three finite
calculations used in the explicit $m=2$ evaluation.

\begin{itemize}
\item \path{MMP5_m2_primitive_verifier.py} reads the fixed
      $117$-term table \path{MMP5_m2_primitive.tsv} and independently
      checks the source identities, $J(0)=0$, both endpoint pole-order
      bounds, the primitive derivative, and the final algebraic
      simplification using exact SymPy arithmetic.
\item The endpoint identity is verified exactly by
      \path{MMP5_m2_endpoint_rational_verifier.py}, using the fixed relations
      in \path{MMP5_m2_endpoint_relations.tsv}.
\item \path{MMP5_m2_A4_exact_certificate.py} verifies the fixed
      rational certificate \path{MMP5_m2_A4_certificate.tsv} for
      \eqref{eq:A4-certificate-identity}; this step uses the Python standard
      library only.
\end{itemize}

The separate file \path{MMP5_m2_numerical_sanity.py} compares the GPL,
inverse-binomial, Fourier-integral, and closed-form values at high precision;
it is included only as an independent diagnostic and is not used in the
proof.  Exact commands, software requirements, and expected output are
recorded in \path{README.md}.

\section*{Acknowledgments}

Q.~Tang thanks Hao Zhang for recommending Brunault and Zudilin's book
\emph{Many Variations of Mahler Measures: A Lasting Symphony}
\cite{BrunaultZudilin2020}, which was helpful in his study of Mahler
measure.  The authors also thank Matilde N.~Lal\'in for her helpful
comments on an earlier version of this paper.  In particular, her
remarks emphasizing the interest of more explicit formulas involving
zeta and polylogarithmic values motivated the authors to investigate
the special-value aspects developed in the present version.

\section*{Statement on AI usage}

The probabilistic reformulation, the reduction to convolution and
Fourier representations, and the Laplace-scaling approach to the
large-$m$ asymptotics were developed by the human authors and appeared
already in an earlier version of this paper.  AI was used more
substantially in preparing the present revision.  In particular,
following the motivation to seek more explicit special-value formulas,
ChatGPT proposed the main ideas underlying
Theorems~\ref{thm:cyclotomic-main} and~\ref{thm:two-factor-main} and
assisted in developing their detailed proofs.  The computational
verification programs and certificates used in the proof of
Theorem~\ref{thm:two-factor-main} were also generated with the
assistance of ChatGPT.  ChatGPT additionally assisted with some
technical derivations and verification, including the Mellin-transform
computation in Proposition~\ref{prop:zeta-cayley} and parts of the
detailed error analysis in the proof of
Theorem~\ref{thm:asymptotic-main}.

The human authors subsequently checked, revised, and verified all
AI-assisted mathematical arguments and computational material used in
the paper, and they take full responsibility for the correctness,
originality, and contents of the final manuscript.

\section*{Statements and Declarations}

\subsection*{Funding}
No funding was received for conducting this study.

\subsection*{Competing interests}
The authors have no competing interests to declare that are relevant to the content of this article.

\subsection*{Data and code availability}
The fixed coefficient tables, exact verification programs, and optional
numerical checks supporting this article are publicly available in the
repository \cite{TangZhangCode}. The cited repository contains
the files used for the computations in this article; instructions,
dependencies, and expected verification output are provided in
\path{README.md}.

\end{document}